\newtheorem{thm}{Theorem}[section]
\newtheorem{theorem}{Theorem}[section]
\newtheorem{lemma}[thm]{Lemma}
\newtheorem{proposition}[thm]{Proposition}
\newcommand{\beq}{\begin{equation}}
\newcommand{\eeq}{\end{equation}}
\newcommand{\beqa}{\begin{eqnarray}}
\newcommand{\eeqa}{\end{eqnarray}}
\newcommand{\beqas}{\begin{eqnarray*}}
\newcommand{\eeqas}{\end{eqnarray*}}
\newcommand{\bi}{\begin{itemize}}
\newcommand{\ei}{\end{itemize}}
\newcommand{\nn}{\nonumber}
\newcommand{\R}{\mathbb{R}}
\newcommand{\lam}{{\lambda}}
\newcommand{\inner}[2]{\langle #1,#2\rangle}
\newcommand{\argmin}{\mathrm{argmin}\,}
\newcommand{\argmax}{\mathrm{argmax}\,}
\newcommand{\dom}{\mathrm{dom}\,}
\newcommand{\Argmin}{\mathrm{Argmin}\,}
\newcommand{\bConv}[1]{\overline{\mbox{\rm Conv}}\,(\R^{#1})}
\newcommand{\tx}{\tilde x}
\newcommand{\ty}{\tilde y}
\newcommand{\infnorm}[1]{\left\|#1\right\|_{\infty}}
\begin{document}
\title{Primal-dual proximal bundle and conditional gradient methods for convex problems
}
\date{November 30, 2024 (revisions: December 23, 2024; June 1, 2025; September 24, 2025)}

\author{
		Jiaming Liang \thanks{Goergen Institute for Data Science and Artificial Intelligence (GIDS-AI) and Department of Computer Science, University of Rochester, Rochester, NY 14620 (email: {\tt jiaming.liang@rochester.edu}). This work was partially supported by GIDS-AI seed funding and AFOSR grant FA9550-25-1-0182.
		}
}
	\maketitle
\maketitle

\begin{abstract}
This paper studies the primal-dual convergence and iteration-complexity of proximal bundle methods for solving nonsmooth problems with convex structures. More specifically, we develop a family of primal-dual proximal bundle methods for solving convex nonsmooth composite optimization problems and establish the iteration-complexity in terms of a primal-dual gap. We also propose a class of proximal bundle methods for solving convex-concave nonsmooth composite saddle-point problems and establish the iteration-complexity to find an approximate saddle-point. This paper places special emphasis on the primal-dual perspective of the proximal bundle method. In particular, we discover an interesting duality between the conditional gradient method and the cutting-plane scheme used within the proximal bundle method. Leveraging this duality, we further develop novel variants of both the conditional gradient method and the cutting-plane scheme. Additionally, we report numerical experiments to demonstrate the effectiveness and efficiency of the proposed proximal bundle methods in comparison with the subgradient method for solving a regularized matrix game.\\
		
		{\bf Key words.} convex nonsmooth composite optimization, saddle-point problem, proximal bundle method, conditional gradient method, iteration-complexity, primal-dual convergence
		\\
		
		{\bf AMS subject classifications.} 
		49M37, 65K05, 68Q25, 90C25, 90C30, 90C60
	\end{abstract}

\section{Introduction}

This paper considers two nonsmooth problems with convex structures: 1) the convex nonsmooth composite optimization (CNCO) problem
\begin{equation}\label{eq:problem}
    \phi_*:=\min \{\phi(x):=f(x)+h(x): x \in \R^n\},
\end{equation}
where $f,h:\R^n\to \R \cup \{+\infty\}$ are proper lower semi-continuous convex functions such that $\dom h \subset \dom f$;
and 2) the convex-concave nonsmooth composite saddle-point problem (SPP)
\begin{equation}\label{eq:spp}
    \min_{x\in \R^n} \max_{y \in \R^m} \left\{\phi(x,y) := f(x,y) + h_1(x) - h_2(y)\right\},
\end{equation}
where $f(x,y)$ is convex in $x$ and concave in $y$, and $h_1:\R^n\to \R \cup \{+\infty\}$ and $h_2:\R^m\to \R \cup \{+\infty\}$ are proper lower semi-continuous convex functions such that $\dom h_1 \times \dom h_2 \subset \dom f$. The main goal of this paper is to study the primal-dual convergence and iteration-complexity of proximal bundle (PB) methods for solving CNCO and SPP.

Classical PB methods, first proposed in \cite{lemarechal1975extension,wolfe1975method} and further developed in \cite{lemarechal1978nonsmooth,mifflin1982modification}, are known to be efficient algorithms for solving CNCO problems.
At the core of classical PB methods is the introduction of a proximal regularization term to the standard cutting-plane method (or Kelly's method) and a sufficient descent test.
Those methods update the prox center (i.e., perform a serious step) if there is a sufficient descent in the function value; otherwise, they keep the prox center and refine the cutting-plane model (i.e., perform a null step).
Various bundle management policies (i.e., update schemes on cutting-plane models) have been discussed in
\cite{du2017rate,frangioni2002generalized,kiwiel2000efficiency,de2014convex,ruszczynski2011nonlinear,van2017probabilistic}.
The textbooks \cite{ruszczynski2011nonlinear,urruty1996convex} provide a comprehensive discussion of the convergence analysis of classical PB methods for CNCO problems.
Iteration-complexity bounds have been established in \cite{astorino2013nonmonotone,diaz2023optimal,du2017rate,kiwiel2000efficiency} for classical PB methods for solving CNCO problems \eqref{eq:problem} with $h\equiv 0$ or being the indicator function of a nonempty closed convex set.
Notably, the first complexity of classical PB methods is given by \cite{kiwiel2000efficiency} as ${\cal O} (\bar \varepsilon^{-3})$ to find a $\bar \varepsilon$-solution of \eqref{eq:problem} (i.e., a point $\bar x\in \dom h$ satisfying $\phi(\bar x)-\phi_*\le \bar \varepsilon$).

Since the lower complexity bound of CNCO is $\Omega(\bar \varepsilon^{-2})$ (see for example Subsection~7.1 of \cite{liang2021proximal}), it is clear that the bound ${\cal O} (\bar \varepsilon^{-3})$ given by \cite{kiwiel2000efficiency} is not optimal. Recent papers \cite{liang2021proximal,liang2024unified} establish the optimal complexity bound ${\cal O} (\bar \varepsilon^{-2})$ for a large range of prox stepsizes by developing modern PB methods, where the sufficient descent test in classical PB methods is replaced by a different serious/null decision condition motivated by the proximal point method (PPM) (see Subsection~3.1 of \cite{liang2021proximal} and Subsection~3.2 of \cite{liang2024unified}).
Moreover, \cite{liang2024unified} studies the cutting-plane (i.e., multi-cuts) model, the cut-aggregation (i.e., two-cuts) model, and a newly proposed one-cut model under a generic bundle update scheme, and provides a unified analysis for all models encompassed within this general update scheme.

This paper investigates the modern PB methods for solving CNCO problems from the primal-dual perspective. More specifically, it shows that a cycle (consecutive null steps between two serious steps) of the methods indeed finds an approximate primal-dual solution to a proximal subproblem, and further establishes the iteration-complexity of the modern PB methods in terms of a primal-dual gap of \eqref{eq:problem}, which is a stronger convergence guarantee than the $\bar \varepsilon$-solution considered in \cite{liang2021proximal,liang2024unified}. Furthermore, the paper reveals an interesting dual relationship between the conditional gradient (CG) method and
the cutting-plane scheme for solving proximal subproblems within PB. Extending upon this duality, the paper also develops novel variants of both CG and the cutting-plane scheme, drawing inspiration from both perspectives of the dual relationship.  

An independent study conducted concurrently by \cite{fersztand2024proximal} examines the same duality under a more specialized assumption that $f$ is piece-wise linear and $h$ is smooth. Building upon the duality and using the convergence analysis of CG, \cite{fersztand2024proximal} is able to improve the general complexity bound ${\cal O}(\bar \varepsilon^{-2})$ to ${\cal O}(\bar \varepsilon^{-4/5})$ in this context.
The duality relationship between the subgradient method/mirror descent and CG is first studied in \cite{bach2015duality}. Related works \cite{bach2013learning,bertsekas2011unifying,mazanti2024nonsmooth,zhou2018limited} investigate the duality between Kelly's method/simplicial method and CG across various settings, and also examine the primal and dual simplicial methods.

The second half of the paper is devoted to developing modern PB methods for solving convex-concave nonsmooth composite SPP. While subgradient-type methods have been extensively studied for solving such SPP, for example, \cite{nedic2009subgradient,golshtein1974generalized,maistroskii1977gradient,nesterov2009primal,uzawa1958iterative,zabotin1988subgradient}, PB methods, which generalize subgradient methods by better using the history of subgradients, have received less attention in this context. Inspired by the PPM interpretation of modern PB methods, this paper proposes a generic inexact proximal point framework (IPPF) to solve SPP \eqref{eq:spp}, comprising both a composite subgradient method and a PB method as special instances. The paper finally establishes the iteration-complexity bounds for both methods to find an approximate saddle-point of \eqref{eq:spp}.

{\bf Organization of the paper.}
Subsection~\ref{subsec:DefNot} presents basic definitions and notation used throughout the paper.
Section~\ref{sec:PDPB} describes the primal-dual proximal bundle (PDPB) method and the assumptions on CNCO, and establishes the iteration-complexity of PDPB in terms of a primal-dual gap.
In addition, Subsection~\ref{subsec:PDCP} presents the key subroutine, namely a primal-dual cutting-plane (PDCP) scheme,  used within PDPB for solving a proximal subproblem and provides the primal-dual convergence analysis of PDCP.
Section~\ref{sec:FW} explores the duality between PDCP and CG by demonstrating that PDCP applied to the proximal subproblem produces the same iterates as CG applied to the dual problem.
Subsection~\ref{subsec:alt-pf} presents an alternative primal-dual convergence analysis of PDCP using CG duality.
Moreover, inspired by the duality, Subsections~\ref{subsec:new GBM} and~\ref{subsec: new CG} develop novel PDCP and CG variants, respectively.
Section~\ref{sec:PB-SPP} extends PB to solving the convex-concave nonsmooth composite SPP.
More specifically, Subsection~\ref{subsec:IPPF} introduces the IPPF for SPP, Subsection~\ref{subsec:PB-SPP} describes the PB method for SPP (PB-SPP) and establishes its iteration-complexity to find an approximate saddle-point, and Subsection~\ref{subsec:opt} derives a tighter (and optimal) complexity bound compared with the one established in Subsection~\ref{subsec:PB-SPP}.
Section~\ref{sec:numerics} presents a comparison of the subgradient method with several variants of PB-SPP for solving a regularized matrix game.
Section~\ref{sec:conclusion} presents some concluding remarks and possible extensions.
Appendix~\ref{sec:technical} provides a few useful technical results and deferred proofs.
Appendices~\ref{sec:PDS} and~\ref{sec:CS-SPP} are devoted to the complexity analyses of subgradient methods for solving CNCO \eqref{eq:problem} and SPP \eqref{eq:spp}, respectively.
Appendix~\ref{sec:detail} provides further implementation details for the numerical experiments reported in Section~\ref{sec:numerics}.

\subsection{Basic definitions and notation} \label{subsec:DefNot}
    
    Let $\R$ denote the set of real numbers. Let $\R_{++}$ denote the set of positive real numbers. Let $\R^n$ denote the standard $n$-dimensional Euclidean space equipped with inner product and norm denoted by $\left\langle \cdot,\cdot\right\rangle $ and $\|\cdot\|$, respectively. 
    
    For given $f: \R^n\rightarrow (-\infty,+\infty]$, let $\dom f:=\{x \in \R^n: f (x) <\infty\}$ denote the effective domain of $f$. We say $f$ is proper if $\dom f \ne \emptyset$. A proper function $f: \R^n\rightarrow (-\infty,+\infty]$ is $\mu$-strongly convex for some $\mu > 0$ if for every $x, y \in \dom f$ and $t \in [0,1]$
    \[
    f(t x+(1-t) y)\le t f(x)+(1-t)f(y) - \frac{t(1-t) \mu}{2}\|x-y\|^2.
    \]
Let $\bConv{n}$ denote the set of all proper lower-semicontinuous convex functions.
	For $\varepsilon \ge 0$, the \emph{$\varepsilon$-subdifferential} of $ f $ at $x \in \dom f$ is denoted by
	\begin{equation}\label{def:subdiff}
	    \partial_\varepsilon f (x):=\left\{ s \in\R^n: f(y)\geq f(x)+\left\langle s,y-x\right\rangle -\varepsilon, \forall y\in\R^n\right\}.
	\end{equation}
	We denote the subdifferential of $f$ at $x \in \dom f$ by $\partial f (x)$, which is the set  $\partial_0 f(x)$ by definition.
    For a given subgradient
$f'(x) \in \partial f(x)$, we denote the linearization of convex function $f$ at $x$ by $\ell_f(\cdot;x)$, which is defined as
\begin{equation}\label{linfdef}
\ell_f(\cdot;x):=f(x)+\langle f'(x),\cdot-x\rangle.
\end{equation}
The infimum convolution of proper functions $f_1, f_2: \R^n\rightarrow (-\infty,+\infty]$ is given by
\begin{equation}\label{def:infconv}
    (f_1 \square f_2)(x)
= \min_{u\in \R^n}\{f_1(u) + f_2(x-u)\}.
\end{equation}

\section{Primal-dual proximal bundle method for CNCO} \label{sec:PDPB}

In this section, we consider the CNCO problem \eqref{eq:problem}.
More specifically, we assume the following conditions hold:
\begin{itemize}
    \item[(A1)] a subgradient oracle, i.e., a function $f': \dom h \to \R^n$ satisfying $f^{\prime}(x) \in \partial f(x)$ for every $x \in \dom h$, is available;
    \item[(A2)] $\|f'(x)\|\le M$ for every $x\in \dom h$ and some $M>0$;
    \item[(A3)] the set of optimal solutions $X_*$ of problem \eqref{eq:problem} is nonempty.
\end{itemize}
Define the linearization of $f$ at $x\in \dom h$, $\ell_f:\dom h \to \R$ as
\[
\ell_f(\cdot;x):=f(x)+\inner{f'(x)}{\cdot-x}.
\]
Clearly, it follows from (A2) that for every $x, y \in \dom h$,
\begin{equation}\label{ineq:f}
  f(x)-\ell_f(x ; y) \le 2 M\|x-y\|.  
\end{equation}
For a given initial point $\hat x_0 \in \dom h$, we denote its distance to $X_*$ as
\begin{equation}\label{eq:d0}
    d_0:=\left\|x_0^*-\hat x_0\right\|, \quad x_0^*:=\underset{x_* \in X_*}\argmin\{\left\| x_*-\hat x_0\right\|\}.
\end{equation}

The primal-dual subgradient method denoted by PDS$(\hat x_0,\lam)$, where $\hat x_0\in \dom h$ is the initial point and $\lam>0$ is the prox stepsize, recursively computes
\begin{equation}\label{eq:pds}
    s_k = f'(x_{k-1}) \in \partial f(x_{k-1}), \quad \hat x_k = \underset{u\in\R^n}{\argmin}\left\{\ell_f(u;\hat x_{k-1}) + h(u) + \frac{1}{2\lam}\|u-\hat x_{k-1}\|^2 \right\}.
\end{equation}

For given tolerance $\bar \varepsilon>0$, letting $\lam = \bar \varepsilon/(16M^2)$, then the iteration-complexity for PDS$(\hat x_0,\lam)$ to generate a primal-dual pair such that the primal-dual gap of a constrained version of \eqref{eq:problem} is bounded by $\bar \varepsilon$
is ${\cal O}(M^2 d_0^2/\bar \varepsilon^2)$ (see Theorem \ref{thm:pds}).

\subsection{Primal-dual cutting-plane scheme}\label{subsec:PDCP}
The PDPB method solves a sequence of proximal subproblems of the form
\begin{equation}\label{eq:phi-lam}
    \min_{u \in \R^n} \left\{\phi^\lam(u) :=\phi(u)+\frac{1}{2 \lambda}\left\|u-\hat x_{k-1}\right\|^2\right\},
\end{equation}
where $\lam$ is the prox stepsize and $\hat x_{k-1}$ is the prox center in the $k$-th proximal subproblem (or cycle). We omit the index $k$ in $\phi^\lam$ since the prox center is always fixed to be $\hat x_{k-1}$ in this subsection.
Each proximal subproblem invokes the PDCP scheme to find an approximate solution. Hence, PDPB can be viewed as a generalization of PDS, which only takes one proximal subgradient step (i.e., \eqref{eq:pds}) to solve every proximal subproblem \eqref{eq:phi-lam}.
The goal of this subsection is to describe the key subroutine PDCP for solving \eqref{eq:phi-lam} and present its primal-dual convergence analysis.

In the rest of this subsection, we consider subproblem \eqref{eq:phi-lam} with fixed prox center $\hat x_{k-1}$. For simplicity, we denote $\hat x_{k-1}$ as $x_0$ from a local perspective within the current cycle, as it is also the initial point of PDCP.  At the $j$-th iteration of PDCP, given some prox stepsize $\lam>0$ and prox center $x_0$, PDCP computes a primal-dual pair $(x_j,s_j)$ as follows
\begin{equation}\label{eq:xj}
    x_j=\underset{u \in \R^n}{\argmin}\left\{\Gamma_j(u) + h(u) +\frac{1}{2 \lam}\|u-x_0\|^2\right\}, \quad s_j \in \partial \Gamma_j(x_j) \cap (-\partial h^\lam(x_j)),  
\end{equation}
where $\Gamma_j$ is a proper, closed and convex function satisfying $\Gamma_j \le f$ for every $j\ge 1$, and
\begin{equation}\label{eq:hlam}
    h^\lam (\cdot) := h(\cdot) + \frac{1}{2\lam}\|\cdot - x_0\|^2.
\end{equation}
Starting from $\Gamma_1(\cdot) = \ell_f(\cdot ; x_0)$, and for every $j\ge 1$,  $\Gamma_{j+1}$ is obtained from the following generic bundle management (GBM), which is motivated by BU given in Subsection~3.1 of \cite{liang2024unified}.
It is easy to verify that the one-cut, two-cut, and multiple-cut schemes, denoted as (E1), (E2), and (E3) in Subsection 3.1 of \cite{liang2024unified}, all satisfy GBM.

\begin{algorithm}[H]
\caption{Generic Bundle Management, GBM$(\lambda, \tau_j, x_0, x_{j}, \Gamma_j)$}\label{alg:GBM}
\begin{algorithmic}
\REQUIRE $(\lam, \tau_j)\in \R_{++} \times [0,1]$, $(x_0,x_j) \in \R^n \times \R^n$, and $\Gamma_j \in \bConv{n}$ satisfying $\Gamma_j \le f$

\STATE $\bullet$ find a bundle model $\Gamma_{j+1}\in \bConv{n}$ satisfying $\Gamma_{j+1} \le f$ and 
        \begin{equation}\label{def:Gammaj}
	     \Gamma_{j+1}(\cdot) \ge \tau_j \bar \Gamma_j(\cdot) + (1-\tau_j)\ell_f(\cdot; x_j),
	\end{equation}
where $\bar \Gamma_j \in \bConv{n}$ satisfies $\bar \Gamma_j \le f$ and
        \begin{equation}\label{def:bar Gamma}
            \bar \Gamma_j(x_j) = \Gamma_j(x_j), \quad x_j = \underset{u\in \R^n}\argmin \left \{\bar \Gamma_j(u) + h(u) + \frac{1}{2\lam}\|u-x_0\|^2\right\}.
        \end{equation}

\ENSURE $\Gamma_{j+1}$.

\end{algorithmic} 
\end{algorithm}

PDCP computes an auxiliary sequence $\{\tx_j\}$ to determine termination. 
It generated $\tx_j$ such that
\begin{equation}\label{ineq:txj}
    \tilde x_1 = x_1, \quad \mbox{and} \quad \phi^\lam(\tx_{j+1}) \le \tau_j\phi^\lam(\tx_j) + (1-\tau_j)\phi^\lam(x_{j+1}), \, \forall j\ge 1,
\end{equation}
where $\phi^\lam$ is as in \eqref{eq:phi-lam}. 
PDCP also computes
\begin{equation}\label{eq:mj}
    m_j =\underset{u \in \R^n}{\min}\left\{\Gamma_j(u) + h(u) +\frac{1}{2 \lam}\|u-x_0\|^2\right\}, \quad t_j=\phi^\lam \left(\tilde x_j\right)-m_j.
\end{equation}
For given tolerance $\varepsilon>0$, PDCP terminates the current cycle when $t_j \le \varepsilon$.

PDCP is formally stated below.

\begin{algorithm}[H]
\caption{Primal-Dual Cutting-Plane, PDCP$(x_0,\lam,\varepsilon)$}\label{alg:PDCP}
\begin{algorithmic}
\REQUIRE given $x_0\in \dom h$, $\lam>0$, $\varepsilon>0$, set $t_0 = 2\varepsilon$, $\Gamma_1(\cdot)=\ell_f(\cdot;x_0)$, and $j=1$;

\WHILE{$t_{j-1} > \varepsilon$}

\STATE {\bf 1.} compute $(x_j,s_j)$ by \eqref{eq:xj}, choose $\tilde x_j$ as in \eqref{ineq:txj}, and set $t_j$ as in~\eqref{eq:mj};
\STATE {\bf 2.} select $\tau_j \in [0,1]$ and update $\Gamma_{j+1}$ by GBM$(\lambda, \tau_j, x_0, x_{j}, \Gamma_j)$ and $j \leftarrow j+1$;

\ENDWHILE
\ENSURE $(x_{j-1}, \tilde x_{j-1},  s_{j-1})$.

\end{algorithmic} 
\end{algorithm}

The auxiliary iterate $\tx_j$ vaguely given in \eqref{ineq:txj} can be explicitly computed by either of the following two formulas:
\[
\tilde x_{j+1} = \tau_j \tilde x_j + (1-\tau_j)x_{j+1}, \quad \forall j \ge 1,
\]
and 
\[
\tilde x_j \in \Argmin\{\phi^\lam(u): u \in\{x_1, \ldots, x_j\} \}, \quad \forall j \ge 1.
\]
Clearly, $\{\tilde x_j\}$ obtained from the second formula above satisfies \eqref{ineq:txj} with any $\tau_j \in [0,1]$.

The following result proves that $t_j$ is an upper bound on the primal-dual gap for \eqref{eq:phi-lam} and hence shows that $(\tx_j,s_j)$ an approximate primal-dual solution pair for \eqref{eq:phi-lam}.

\begin{lemma}\label{lem:tj}
    For every $j \ge 1$, we have
    \begin{equation}\label{ineq:gap}
        \phi^\lam\left(\tilde x_j\right) + f^*(s_j) + (h^\lam)^*(-s_j) \le t_j.
    \end{equation}
\end{lemma}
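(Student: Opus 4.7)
The plan is to rewrite $-m_j$ as a sum of conjugate values and then use the inequality $\Gamma_j \le f$ (which passes through conjugation as $\Gamma_j^* \ge f^*$) to lower-bound $-m_j$ by $f^*(s_j) + (h^\lam)^*(-s_j)$, from which the claim follows immediately by subtracting $\phi^\lam(\tx_j)$.

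First, I would note that the point $x_j$ defined in \eqref{eq:xj} is the unique minimizer of $\Gamma_j + h^\lam$, so $m_j$ as given in \eqref{eq:mj} is attained at $x_j$:
\[
m_j = \Gamma_j(x_j) + h^\lam(x_j).
\]
Next, I would use the dual characterization $s_j \in \partial \Gamma_j(x_j) \cap (-\partial h^\lam(x_j))$ to invoke the Fenchel--Young identity on each piece:
\[
\Gamma_j(x_j) + \Gamma_j^*(s_j) = \inner{s_j}{x_j}, \qquad h^\lam(x_j) + (h^\lam)^*(-s_j) = -\inner{s_j}{x_j}.
\]
Adding these two equalities and rearranging gives
\[
-m_j = \Gamma_j^*(s_j) + (h^\lam)^*(-s_j).
\]

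Then, since $\Gamma_j \le f$ pointwise (this is the invariant maintained by GBM and is explicit in the initialization $\Gamma_1 = \ell_f(\cdot;x_0)$ and in \eqref{def:Gammaj}), taking conjugates reverses the inequality, so $\Gamma_j^*(s_j) \ge f^*(s_j)$. Substituting yields $-m_j \ge f^*(s_j) + (h^\lam)^*(-s_j)$, and adding $\phi^\lam(\tx_j)$ to both sides produces exactly \eqref{ineq:gap}.

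The only mildly delicate point is ensuring that the subdifferential intersection in \eqref{eq:xj} is nonempty (so that the claimed Fenchel--Young equalities are valid), but this is guaranteed by the optimality condition of the strongly convex problem defining $x_j$ together with the fact that $x_0 \in \dom h$ lies in the relative interior of the relevant domain, so standard subdifferential calculus applies. Apart from that, the argument is a direct chain of Fenchel--Young applications plus monotonicity of the Legendre transform with respect to pointwise ordering, so I do not anticipate any real obstacle.
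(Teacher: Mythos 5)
Your proposal is correct and follows essentially the same route as the paper's proof: both apply the Fenchel--Young equality (the paper cites Theorem 4.20 of \cite{Beck2017}) to the two inclusions $s_j \in \partial \Gamma_j(x_j)$ and $-s_j \in \partial h^\lam(x_j)$, sum to obtain $-m_j = \Gamma_j^*(s_j) + (h^\lam)^*(-s_j)$, and then use $\Gamma_j \le f \Rightarrow \Gamma_j^* \ge f^*$ to conclude. The remark on nonemptiness of the intersection is a reasonable extra caution, though the paper treats the existence of such an $s_j$ as part of the definition of the iterate in \eqref{eq:xj}.
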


\begin{proof}
    It follows from \eqref{eq:xj} that $s_j \in \partial \Gamma_j(x_j)$ and $-s_j\in \partial h^\lam(x_j)$.
    Using Theorem 4.20 of \cite{Beck2017}, we have
    \[
    \Gamma_j^*(s_j) = -\Gamma_j(x_j) + \inner{s_j}{x_j}, \quad  (h^\lam)^*(-s_j) = - h^\lam(x_j)  - \inner{s_j}{x_j}.
    \]
    Combining the above identities and using the definition of $m_j$ in \eqref{eq:mj}, we have
    \[
    -m_j = \Gamma_j^*(s_j) + (h^\lam)^*(-s_j).
    \]
    It clearly from $\Gamma_j\le f$ that $\Gamma_j^*\ge f^*$.
    This observation and the above inequality imply that
    \[
    \phi^\lam\left(\tilde x_j\right) + f^*(s_j) + (h^\lam)^*(-s_j) \le \phi^\lam(\tilde x_j) - m_j.
    \]
    Hence, \eqref{ineq:gap} immediately follows from the definition of $t_j$ in \eqref{eq:mj}.
    Finally, we note that $- f^*(s) - (h^\lam)^*(-s)$ is the Lagrange dual function of $\phi^\lam(x)$ in \eqref{eq:phi-lam}. Therefore, the left-hand side of \eqref{ineq:gap} is the primal-dual gap for \eqref{eq:phi-lam}.
\end{proof}

With regard to Lemma \ref{lem:tj}, it suffices to show the convergence of $t_j$ to develop the primal-dual convergence analysis of PDCP.
We begin this analysis by providing some basic properties of GBM. 
The following result is adapted from Lemma~4.4 of \cite{liang2024unified}.

\begin{lemma}\label{lem:Gamma}
For every $j \ge 1$, there exists $\bar \Gamma_j \in \bConv{n}$ such that for every $u\in \R^n$,
\begin{equation}\label{ineq:bGammaj}
    \bar{\Gamma}_j(u) + h^\lam(u) \ge m_j + \frac{1}{2\lam}\|u-x_j\|^2.
\end{equation}
\end{lemma}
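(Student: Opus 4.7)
The plan is to prove \eqref{ineq:bGammaj} as a direct consequence of the strong convexity of the subproblem defining $x_j$ in \eqref{def:bar Gamma}. The existence clause itself requires no real work: a valid $\bar \Gamma_j$ is precisely what GBM delivers (for instance, $\bar \Gamma_j = \Gamma_j$ in the multi-cut scheme, or the natural aggregate of past linearizations in the one-cut/two-cut schemes), and any such choice satisfies \eqref{def:bar Gamma} by construction.

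First I would observe that the quadratic $u \mapsto \frac{1}{2\lam}\|u - x_0\|^2$ is $\frac{1}{\lam}$-strongly convex; adding the convex functions $h$ and $\bar \Gamma_j$ preserves this modulus, so $\bar \Gamma_j + h^\lam$ (with $h^\lam$ as in \eqref{eq:hlam}) is $\frac{1}{\lam}$-strongly convex. Since $x_j$ is its minimizer by \eqref{def:bar Gamma}, the standard quadratic growth bound for strongly convex functions around their minimizer yields
\begin{equation*}
\bar \Gamma_j(u) + h^\lam(u) \ge \bar \Gamma_j(x_j) + h^\lam(x_j) + \frac{1}{2\lam}\|u - x_j\|^2, \quad \forall u \in \R^n.
\end{equation*}

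It then remains to identify the constant $\bar \Gamma_j(x_j) + h^\lam(x_j)$ with $m_j$. By the first identity in \eqref{def:bar Gamma} we have $\bar \Gamma_j(x_j) = \Gamma_j(x_j)$, and from \eqref{eq:xj} and \eqref{eq:mj} the point $x_j$ also minimizes $\Gamma_j + h^\lam$ with optimal value $m_j$, so $\Gamma_j(x_j) + h^\lam(x_j) = m_j$. Substituting into the displayed inequality gives \eqref{ineq:bGammaj}. I do not anticipate a real obstacle, since the argument is essentially the textbook fact that a $\mu$-strongly convex function grows quadratically around its minimizer; the only mild bookkeeping is noting that $\bar \Gamma_j$ and $\Gamma_j$ are coupled only through their common value at $x_j$, not as entire functions, which is exactly what the definition in \eqref{def:bar Gamma} provides.
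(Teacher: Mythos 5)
Your proof is correct and follows essentially the same route as the paper: both apply the $\lambda^{-1}$-strong convexity of $\bar\Gamma_j + h^\lam$ at its minimizer $x_j$ to get quadratic growth, then identify the constant $\bar\Gamma_j(x_j) + h^\lam(x_j)$ with $m_j$ via $\bar\Gamma_j(x_j)=\Gamma_j(x_j)$ and the definition of $m_j$ in \eqref{eq:mj}. The extra remark on where $\bar\Gamma_j$ comes from is accurate but not needed, since GBM supplies it by construction.
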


\begin{proof}
    Since the objective function in \eqref{def:bar Gamma} is $\lam^{-1}$-strongly convex, it follows from \eqref{def:bar Gamma} that 
    \[
    \bar \Gamma_j(u) + h(u) + \frac{1}{2\lam}\|u-x_0\|^2 \ge \bar \Gamma_j(x_j) + h(x_j) + \frac{1}{2\lam}\|x_j-x_0\|^2 + \frac{1}{2\lam}\|u-x_j\|^2.
    \]
    Inequality \eqref{ineq:bGammaj} immediately follows from the above inequality, the definition of $m_j$ in \eqref{eq:mj}, and the fact that $h^\lam(\cdot) = h(\cdot) + \|\cdot-x_0\|^2/(2\lam)$.
\end{proof}

Following Lemma \ref{lem:Gamma}, we are able to present the convergence rate of $t_j$ under the assumption that $\tau_j= j/(j+2)$ for every $j\ge 1$.
The following proposition resembles Lemma~4.6 in \cite{liang2024unified}.

\begin{proposition}\label{prop:inner}
Considering Algorithm \ref{alg:PDCP} with $\tau_j = j/(j+2)$, then for every $j \ge 1$, we have
\begin{equation}\label{ineq:tj}
    t_j\le \frac{2t_1}{j(j+1)} + \frac{16\lam M^2}{j+1},
\end{equation}
where $t_j$ is as in \eqref{eq:mj}.
Moreover, the number of iterations for PDCP to obtain $t_j \le \varepsilon$ is at most
\[
{\cal O}\left(\frac{\sqrt{t_1}}{\sqrt{\varepsilon}} + \frac{\lam M^2}{\varepsilon} + 1\right).
\]
\end{proposition}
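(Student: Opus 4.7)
The plan is to establish a one-step recursion of the form $t_{j+1}\le \tau_j t_j + C(1-\tau_j)^2/\tau_j$ with $C=2\lam M^2$, and then unroll it with $\tau_j=j/(j+2)$ by the standard telescoping trick. The two ingredients I need for the recursion are (i) a lower bound on $m_{j+1}$ that exploits the cut inequality~\eqref{def:Gammaj} together with the strong-convexity estimate~\eqref{ineq:bGammaj} from Lemma~\ref{lem:Gamma}, and (ii) the auxiliary-iterate inequality~\eqref{ineq:txj} that controls $\phi^\lam(\tilde x_{j+1})$ from above.

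First, evaluating~\eqref{def:Gammaj} at $u=x_{j+1}$ (the minimizer defining $m_{j+1}$) and adding $h^\lam(x_{j+1})$ to both sides, I would write
\[
m_{j+1}\ge \tau_j\bigl[\bar\Gamma_j(x_{j+1})+h^\lam(x_{j+1})\bigr]+(1-\tau_j)\bigl[\ell_f(x_{j+1};x_j)+h^\lam(x_{j+1})\bigr].
\]
For the first bracket I apply Lemma~\ref{lem:Gamma} at $u=x_{j+1}$ to get a lower bound $m_j+\|x_{j+1}-x_j\|^2/(2\lam)$. For the second bracket I note that $h^\lam(x_{j+1})=\phi^\lam(x_{j+1})-f(x_{j+1})$ and invoke~\eqref{ineq:f} to bound $\ell_f(x_{j+1};x_j)\ge f(x_{j+1})-2M\|x_{j+1}-x_j\|$, producing $\phi^\lam(x_{j+1})-2M\|x_{j+1}-x_j\|$. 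Combining this lower bound on $m_{j+1}$ with $\phi^\lam(\tilde x_{j+1})\le \tau_j\phi^\lam(\tilde x_j)+(1-\tau_j)\phi^\lam(x_{j+1})$ from~\eqref{ineq:txj}, the $(1-\tau_j)\phi^\lam(x_{j+1})$ terms cancel and I am left with
\[
t_{j+1}\le \tau_j t_j-\frac{\tau_j}{2\lam}\|x_{j+1}-x_j\|^2+2M(1-\tau_j)\|x_{j+1}-x_j\|.
\]
Maximizing the right-hand side over $\|x_{j+1}-x_j\|\ge 0$ (a scalar quadratic of the form $-ar^2+br$, maximum $b^2/(4a)$) yields the clean recursion $t_{j+1}\le \tau_j t_j+2\lam M^2(1-\tau_j)^2/\tau_j$.

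Plugging in $\tau_j=j/(j+2)$ gives $(1-\tau_j)^2/\tau_j=4/(j(j+2))$, so
\[
t_{j+1}\le \tfrac{j}{j+2}t_j+\tfrac{8\lam M^2}{j(j+2)}.
\]
Multiplying by $(j+1)(j+2)$ turns this into $(j+1)(j+2)t_{j+1}\le j(j+1)t_j+8\lam M^2(j+1)/j\le j(j+1)t_j+16\lam M^2$ for $j\ge 1$. Setting $T_j:=j(j+1)t_j$ and telescoping from $1$ to $j-1$ yields $T_j\le 2t_1+16\lam M^2(j-1)$, which is exactly~\eqref{ineq:tj} after dividing by $j(j+1)$ and using $(j-1)/j\le 1$.

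For the complexity statement, I force each of the two terms in~\eqref{ineq:tj} to be at most $\varepsilon/2$. The first requires $j(j+1)\ge 4t_1/\varepsilon$, i.e.\ $j=\mathcal{O}(\sqrt{t_1/\varepsilon})$; the second requires $j+1\ge 32\lam M^2/\varepsilon$, i.e.\ $j=\mathcal{O}(\lam M^2/\varepsilon)$; adding the two (plus a constant for the regime where both bounds are below $1$) gives the stated $\mathcal{O}\!\bigl(\sqrt{t_1}/\sqrt{\varepsilon}+\lam M^2/\varepsilon+1\bigr)$. The only step I expect to require any care is the lower bound on $m_{j+1}$: combining the cut inequality~\eqref{def:Gammaj} with~\eqref{ineq:bGammaj} and with the linearization error~\eqref{ineq:f} at the \emph{shifted} point $x_{j+1}$ (rather than at $x_j$ or $\tilde x_j$) is the one place where the argument is non-routine, since it is this choice that makes both the $(1-\tau_j)\phi^\lam(x_{j+1})$ terms line up for cancellation and the residual $-\|x_{j+1}-x_j\|^2$ appear with the correct sign for the quadratic-in-$r$ maximization.
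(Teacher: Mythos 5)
Your proof is correct and follows essentially the same route as the paper's: evaluate \eqref{def:Gammaj} at $x_{j+1}$, lower-bound the $\bar\Gamma_j$ term via Lemma~\ref{lem:Gamma}, the linearization term via \eqref{ineq:f}, cancel the $(1-\tau_j)\phi^\lam(x_{j+1})$ contributions against \eqref{ineq:txj}, absorb the cross term into the quadratic, and telescope with the weights $j(j+1)$. The only cosmetic differences are that you state the argument as an unweighted one-step recursion before multiplying by the weights (the paper carries $A_j=j(j+1)/2$ throughout) and you maximize the scalar quadratic exactly rather than invoking Young's inequality; both yield the identical bound \eqref{ineq:tj}.
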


\begin{proof}
We first note that for every $j\ge 1$, $\tau_j=A_j/A_{j+1}$ where $A_{j+1}=A_j+j+1$ and $A_0=0$, i.e., $A_j=j(j+1)/2$ for every $j\ge 0$.
It follows from this observation, the definition of $m_j$ in \eqref{eq:mj}, and relation \eqref{def:Gammaj} that
\begin{align*}
    A_{j+1}m_{j+1} & \stackrel{\eqref{eq:mj}}{=} A_{j+1}(\Gamma_{j+1}+h^\lam)(x_{j+1}) \\
    &\stackrel{\eqref{def:Gammaj}}{\ge} A_j \left[(\bar \Gamma_j+ h^\lam)(x_{j+1}) \right] + (j+1)\left[\ell_f(x_{j+1};x_j) + h^\lam(x_{j+1})\right].
\end{align*}
Applying Lemma~\ref{lem:Gamma} in the above inequality and using \eqref{ineq:f}, we have
\begin{align*}
    A_{j+1}m_{j+1} 
    &\stackrel{\eqref{ineq:bGammaj}}{\ge} A_j \left[ m_j+\frac{1}{2\lam}\|x_{j+1} - x_j\|^2 \right] + (j+1)\left[\ell_f(x_{j+1};x_j) + h^\lam(x_{j+1})\right] \\
    &= A_j m_j + (j+1)\left[\ell_f(x_{j+1};x_j) + h^\lam(x_{j+1}) + \frac{A_j}{2\lam (j+1)}\|x_{j+1} - x_j\|^2 \right] \\
    &\stackrel{\eqref{ineq:f}}{\ge} A_j m_j + (j+1)\left[\phi^\lam(x_{j+1}) - 2M\|x_{j+1}-x_j\| + \frac{A_j}{2\lam (j+1)}\|x_{j+1} - x_j\|^2 \right] \\
    &\ge A_j m_j + (j+1)\phi^\lam(x_{j+1}) - \frac{2\lam M^2 (j+1)^2}{A_j}
\end{align*}
where the last inequality is due to the Young's inequality $a^2+b^2\ge 2ab$.
It follows from the fact that $A_j=j(j+1)/2$ that for every $j\ge 1$,
\[
    A_{j+1}m_{j+1} \ge A_j m_j + (j+1)\phi^\lam(x_{j+1}) - 8\lam M^2.
\]
Replacing the index $j$ in the above inequality by $i$, summing the resulting inequality from $i=1$ to $j-1$, and using the definition of $t_j$ in \eqref{eq:mj} and the fact that $\tx_1=x_1$, we obtain
\begin{align*}
    A_j m_j
    &\ge A_1 m_1 + 2 \phi^\lam(x_2) + \dots + j\phi^\lam(x_j) - 8\lam M^2 (j-1) \\
    &\stackrel{\eqref{eq:mj}}{=} -A_1 t_1 + A_1 \phi^\lam(x_1) + 2 \phi^\lam(x_2) + \dots + j\phi^\lam(x_j) - 8\lam M^2 (j-1) \\
    &\stackrel{\eqref{ineq:txj}}{\ge} -A_1 t_1 + A_j \phi^\lam(\tx_j) - 8\lam M^2 (j-1),
\end{align*}
where the last inequality follows from \eqref{ineq:txj} and the fact that $A_j=A_{j-1}+j$.
Rearranging the terms and using the definition of $t_j$ in \eqref{eq:mj} again, we have
\begin{equation}\label{ineq:Aj-tj}
    A_j t_j \le A_1 t_1 + 8\lam M^2 (j-1).
\end{equation}
Hence, \eqref{ineq:tj} follows from the fact that $A_j=j(j+1)/2$.
Finally, the complexity bound immediately follows from \eqref{ineq:tj}.
\end{proof}

\subsection{Primal-dual proximal bundle method}\label{subsec:PDPB}

Recall the definitions of $d_0$ and $x_0^*$ in \eqref{eq:d0}. Since $x_0^* \in B(\hat x_0, 6d_0)$, which is the ball centered at $\hat x_0$ and with radius $6d_0$, it is easy to see that to solve \eqref{eq:problem}, it suffices to solve
\begin{equation}\label{eq:new}
    \min \left\{\hat \phi(x):=f(x)+\hat h(x): x \in \R^n\right\}
    = \min \left\{\phi(x): x \in Q\right\},
\end{equation}
where $\hat h = h + I_Q$ and $I_Q$ is the indicator function of $Q = B(\hat x_0, 6d_0)$.

In what follows, we present the PDPB and establish the complexity for obtaining a primal-dual solution pair of \eqref{eq:new}.
The PDPB is formally stated below.

\begin{algorithm}[H]
\caption{Primal-Dual Proximal Bundle, PDPB$(\hat x_0,\lam,\bar \varepsilon)$}\label{alg:PDPB}
\begin{algorithmic}
\REQUIRE given $(\hat x_0,\lam,\bar \varepsilon) \in \dom h \times \R_{++} \times \R_{++}$
\FOR{$k=1,2,\cdots$}
\STATE $\bullet$ call oracle $(\hat x_k,\tx_k,s_k) = \text{PDCP}(\hat x_{k-1},\lam, \bar \varepsilon)$ and compute 
\begin{equation}\label{eq:bxk}
    \bar x_k = \frac{1}{k}\sum_{i=1}^{k} \tx_i , \quad \bar s_k = \frac{1}{k}\sum_{i=1}^{k} s_i.
\end{equation}
\ENDFOR
\end{algorithmic} 
\end{algorithm}

In the $k$-th iteration of PDPB, we are approximately solving the proximal subproblem \eqref{eq:phi-lam}.
More specifically, the pair $(\tx_k,s_k)$ is a primal-dual solution to \eqref{eq:phi-lam} with the primal-dual gap bounded by $\bar \varepsilon$ (see Lemma \ref{lem:tj}).
Recall from Subsection~\ref{subsec:PDCP} that \eqref{eq:phi-lam} is approximately solved by invoking PDCP. The (global) iteration indices in PDCP are regarded as the $k$-th cycle, denoted by ${\cal C}_k=\left\{i_k, \ldots, j_k\right\}$, where $j_k$ is the last iteration index of the $k$-th call to PDCP, $j_0=0$, and $i_k = j_{k-1}+1$.
Hence, for the $j_k$-th iteration of PDCP, we have
\begin{equation}\label{eq:translate}
    \hat x_k = x_{j_k}, \quad \tilde x_k = \tilde x_{j_k}, \quad s_k = s_{j_k}, \quad \Gamma_k  = \Gamma_{j_k}, \quad m_k = m_{j_k},
\end{equation}
and \eqref{eq:xj} becomes
\begin{equation}\label{eq:xk}
    \hat x_k=\underset{u \in \R^n}{\argmin}\left\{\Gamma_k(u) + h(u) +\frac{1}{2 \lambda}\left\|u-\hat x_{k-1}\right\|^2\right\}, \quad s_k \in \partial \Gamma_k(\hat x_k) \cap (-\partial h^\lam(\hat x_k)).
\end{equation}

The following lemma provides basic properties of PDPB and is the starting point of the the complexity analysis of PDPB.

\begin{lemma}\label{ser-lemma1}
    The following statements hold for every $k \ge 1$:
    \begin{enumerate}
        \item [(a)] $\Gamma_k \le f$ and $f^*\le \Gamma_k^*$;
        \item [(b)] $s_k \in \partial \Gamma_k(\hat x_k)$ and $g_k \in \partial h(\hat x_k)$ where $g_k = - s_k + (\hat x_{k-1}-\hat x_k)/\lam$;
        \item [(c)] $\phi^\lam(\tilde x_k) \le \bar \varepsilon + m_k = \bar \varepsilon + (\Gamma_k + h)(\hat x_k) + \|\hat x_k - \hat x_{k-1}\|^2/(2\lam)$.
    \end{enumerate}
\end{lemma}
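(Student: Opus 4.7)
The plan is to verify each of the three statements largely by unpacking the definitions supplied in Subsection~\ref{subsec:PDCP} and the translation rule \eqref{eq:translate}. There is no deep estimate hiding here; the lemma is really a bookkeeping statement that collects into one place the structural facts needed to launch the convergence analysis of PDPB in later results.

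For part (a), I would proceed by induction on the inner iteration index $j$ of PDCP inside a fixed cycle. The base case is $\Gamma_1(\cdot) = \ell_f(\cdot;\hat x_{k-1})$, which lies below $f$ by convexity of $f$. For the inductive step, GBM returns $\Gamma_{j+1} \in \bConv{n}$ with the requirement $\Gamma_{j+1} \le f$ built directly into its specification in Algorithm~\ref{alg:GBM}; the intermediate object $\bar \Gamma_j$ is likewise required to satisfy $\bar \Gamma_j \le f$. Translating via \eqref{eq:translate} then gives $\Gamma_k = \Gamma_{j_k} \le f$. Taking conjugates reverses the pointwise inequality, yielding $\Gamma_k^* \ge f^*$.

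Part (b) is an immediate consequence of the second relation in \eqref{eq:xk}, which is itself just \eqref{eq:xj} at $j=j_k$ after the translation. The containment $s_k \in \partial \Gamma_k(\hat x_k)$ is one half of that line. For the second containment, the relation $-s_k \in \partial h^\lam(\hat x_k)$ combined with the definition $h^\lam(\cdot) = h(\cdot) + \|\cdot - \hat x_{k-1}\|^2/(2\lam)$ from \eqref{eq:hlam}, together with the sum rule for subdifferentials (the quadratic term is smooth, so its subdifferential is a singleton), gives $-s_k = g + (\hat x_k - \hat x_{k-1})/\lam$ for some $g \in \partial h(\hat x_k)$. Solving for $g$ identifies it with the $g_k$ defined in the statement.

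For part (c), the termination criterion of PDCP at the end of the $k$-th cycle reads $t_{j_k} \le \bar \varepsilon$, and using \eqref{eq:mj} and \eqref{eq:translate} this is exactly $\phi^\lam(\tilde x_k) - m_k \le \bar \varepsilon$, which rearranges to the first inequality in (c). The equality in (c) follows from the definition of $m_k = m_{j_k}$ in \eqref{eq:mj}: since the minimized functional there is $\lam^{-1}$-strongly convex, the minimizer is unique and is precisely $\hat x_k$ by \eqref{eq:xk}, so $m_k = \Gamma_k(\hat x_k) + h(\hat x_k) + \|\hat x_k - \hat x_{k-1}\|^2/(2\lam)$. The only mild subtlety — hardly an obstacle — is keeping the translation \eqref{eq:translate} between the inner PDCP indices and the outer PDPB indices straight, which is why I would write out the identifications once at the start of the proof and then appeal to them silently throughout.
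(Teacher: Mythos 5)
Your proposal is correct and follows essentially the same route as the paper: part (a) is the pointwise bound built into the specification of $\Gamma_j$ and GBM plus order-reversal of conjugation, part (b) is a direct unpacking of the inclusion $s_k \in \partial \Gamma_k(\hat x_k) \cap (-\partial h^\lam(\hat x_k))$ in \eqref{eq:xk} via the subdifferential sum rule for $h^\lam = h + \|\cdot - \hat x_{k-1}\|^2/(2\lam)$, and part (c) is the termination test $t_{j_k} \le \bar\varepsilon$ combined with the definition of $m_{j_k}$ as the optimal value attained at $\hat x_k$. The only difference is that you spell out the induction for (a) and the sum rule for (b), which the paper leaves implicit.
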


\begin{proof}
    (a) It follows from the facts that $\Gamma_j \le f$ for every $j\ge 1$ and $\Gamma_k=\Gamma_{j_k}$ that the first inequality holds. The second one immediately follows from the first one and the definition of the conjugate function.

    (b) This statement follows from \eqref{eq:xk} and the definitions in \eqref{eq:translate}.

    (c) This statement follows from the termination criterion of the $k$-th cycle, that is, $t_{j_k}\le \bar \varepsilon$, and the definitions in \eqref{eq:mj} and \eqref{eq:translate}.
\end{proof}

The following proposition is a key component of our complexity analysis, as it establishes an important primal-dual gap for \eqref{eq:problem}.

\begin{proposition}\label{prop:pd-key}
For every $k \ge 1$, we have
\begin{equation}\label{ineq:conjugate}
    \phi(\bar{x}_k) + f^*(\bar s_k) + \hat h ^*(-\bar s_k) \le \bar \varepsilon + \frac{18 d_0^2}{\lam k}.
\end{equation}
where $\bar x_k$ and $\bar s_k$ are as in \eqref{eq:bxk}.
\end{proposition}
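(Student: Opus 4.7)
The plan is to bound the three pieces $\phi(\bar x_k)$, $f^*(\bar s_k)$, and $\hat h^*(-\bar s_k)$ separately by averages of per-cycle quantities and then telescope. For the primal piece, Jensen gives $\phi(\bar x_k) \le (1/k)\sum_i \phi(\tilde x_i)$, and Lemma~\ref{ser-lemma1}(c) combined with the expansion $m_i=(\Gamma_i+h)(\hat x_i)+\|\hat x_i-\hat x_{i-1}\|^2/(2\lambda)$ bounds each $\phi(\tilde x_i)$ by $\bar\varepsilon + (\Gamma_i+h)(\hat x_i) + \|\hat x_i-\hat x_{i-1}\|^2/(2\lambda) - \|\tilde x_i-\hat x_{i-1}\|^2/(2\lambda)$. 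For $f^*(\bar s_k)$, I would use $s_i\in\partial\Gamma_i(\hat x_i)$ with $\Gamma_i\le f$ (Lemma~\ref{ser-lemma1}(a),(b)) in averaged form: $f(u) \ge (1/k)\sum_i[\Gamma_i(\hat x_i) + \langle s_i,u-\hat x_i\rangle]$ for every $u$, whence $f^*(\bar s_k)\le (1/k)\sum_i[\langle s_i,\hat x_i\rangle - \Gamma_i(\hat x_i)]$ by Fenchel.

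The subtler piece is $\hat h^*(-\bar s_k)=\sup_{v\in Q}\{-\langle\bar s_k,v\rangle-h(v)\}$. Invoking $g_i\in\partial h(\hat x_i)$ from Lemma~\ref{ser-lemma1}(b) averaged yields $h(v)\ge (1/k)\sum_i[h(\hat x_i)+\langle g_i,v-\hat x_i\rangle]$ for every $v$, and after taking $\sup_{v\in Q}$,
\[
\hat h^*(-\bar s_k) \le -\frac{1}{k}\sum_i h(\hat x_i)+\frac{1}{k}\sum_i \langle g_i,\hat x_i\rangle + \sup_{v\in Q}\langle -\bar s_k-\bar g_k,v\rangle,
\]
where $\bar g_k := (1/k)\sum_i g_i$. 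The crucial telescope $s_i+g_i=(\hat x_{i-1}-\hat x_i)/\lambda$ from Lemma~\ref{ser-lemma1}(b) gives $\bar s_k+\bar g_k=(\hat x_0-\hat x_k)/(\lambda k)$; since $Q=B(\hat x_0,6d_0)$ the support-function value becomes $-\langle(\hat x_0-\hat x_k)/(\lambda k),\hat x_0\rangle + (6d_0/(\lambda k))\|\hat x_0-\hat x_k\|$.

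Adding the three bounds, the $\Gamma_i(\hat x_i)$ and $h(\hat x_i)$ contributions cancel, leaving only quadratic residues in $w_i := \hat x_i-\hat x_{i-1}$. The cross term from $f^*$ and $\hat h^*$ becomes $(1/k)\sum_i\langle s_i+g_i,\hat x_i\rangle = -(1/(\lambda k))\sum_i\langle w_i,\hat x_i\rangle$. Using the elementary identity $2\sum_i\langle w_i,\hat x_i\rangle = \|\hat x_k\|^2-\|\hat x_0\|^2+\sum_i\|w_i\|^2$, combining $(1/k)\sum\|w_i\|^2/(2\lambda)$, the cross term, and the $-\langle(\hat x_0-\hat x_k)/(\lambda k),\hat x_0\rangle$ from the support function, I expect these to collapse precisely to $-\|\hat x_0-\hat x_k\|^2/(2\lambda k)$. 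Completing the square against the remaining linear $6d_0$ term then gives
\[
-\frac{\|\hat x_0-\hat x_k\|^2}{2\lambda k}+\frac{6d_0\|\hat x_0-\hat x_k\|}{\lambda k}=\frac{18d_0^2}{\lambda k}-\frac{(\|\hat x_0-\hat x_k\|-6d_0)^2}{2\lambda k}\le\frac{18d_0^2}{\lambda k},
\]
which, together with the non-positive slack $-(1/k)\sum_i\|\tilde x_i-\hat x_{i-1}\|^2/(2\lambda)$, yields \eqref{ineq:conjugate}. The hard part is arranging the $\hat h^*$ estimate so that its support-function overhead aligns with the primal--dual cross term through exactly the same telescope $s_i+g_i=-w_i/\lambda$; the coefficient $18=36/2$ then arises naturally from completing the square with the radius $6d_0$ used to define $Q$, rather than piling up linearly in $k$.
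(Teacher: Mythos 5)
Your proposal is correct and reaches the exact bound $\bar \varepsilon + 18 d_0^2/(\lam k)$, but it takes a noticeably different route from the paper's in how it handles the composite conjugate $\hat h^*(-\bar s_k)$. The paper works per iteration with exact Fenchel equalities ($\Gamma_k(\hat x_k)+\Gamma_k^*(s_k)=\inner{\hat x_k}{s_k}$, etc.), obtains the telescoped bound $\phi(\bar x_k)+f^*(\bar s_k)+h^*(\bar g_k)\le \bar\varepsilon+\tfrac{1}{2\lam k}(\|\hat x_0\|^2-\|\hat x_k\|^2)$, and only then converts $h^*(\bar g_k)$ into $\hat h^*(-\bar s_k)$ by introducing the auxiliary quadratic $\eta_k(u)=\tfrac{1}{2\lam k}\|u-\hat x_0\|^2$ and applying two infimal-convolution inequalities, the residual $(-\hat\eta_k)^*(0)=\max_{u\in Q}\|u-\hat x_0\|^2/(2\lam k)=18 d_0^2/(\lam k)$ producing the constant. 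You instead bound $\hat h^*(-\bar s_k)$ directly from the averaged subgradient inequality for $h$ at the points $\hat x_i$ plus the support function of $Q$ evaluated at $-\bar s_k-\bar g_k=(\hat x_k-\hat x_0)/(\lam k)$, and close the argument with the elementary identity $2\inner{w_i}{\hat x_i}=\|\hat x_i\|^2-\|\hat x_{i-1}\|^2+\|w_i\|^2$ and a completion of the square, $-r^2/(2\lam k)+6d_0 r/(\lam k)\le 18d_0^2/(\lam k)$. I verified the algebra: the $\Gamma_i(\hat x_i)$ and $h(\hat x_i)$ contributions cancel as you claim, the quadratic residues together with the linear part of the support function collapse to $-\|\hat x_0-\hat x_k\|^2/(2\lam k)$, and the discarded slack $-\tfrac{1}{2\lam k}\sum_i\|\tilde x_i-\hat x_{i-1}\|^2$ is nonpositive. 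What your approach buys is that it avoids conjugate calculus entirely (no Fenchel equality, no infimal convolution), relying only on subgradient inequalities and the support function of a ball, and it even retains an extra nonpositive term $-(\|\hat x_0-\hat x_k\|-6d_0)^2/(2\lam k)$; what the paper's approach buys is the cleaner intermediate statement \eqref{ineq:bar}, which exhibits the averaged triple $(\bar x_k,\bar s_k,\bar g_k)$ as an $\bar\varepsilon$-accurate primal--dual point before the restriction to $Q$ is imposed.
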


\begin{proof}
It follows from Lemma \ref{ser-lemma1}(b) and Theorem 4.20 of \cite{Beck2017} that for every $k \ge 1$,
\[ 
\Gamma_k(\hat x_k) +  \Gamma_k^*( s_k) = \inner{\hat x_k}{s_k}, \quad  h(\hat x_k) +  h^*(g_k) = \inner{\hat x_k}{g_k}.
\]
Summing the above two equations yields
\begin{equation}\label{eq:Gamma-h}
    (\Gamma_k + h)(\hat x_k) + {\Gamma}_k^*(s_k) + h^*(g_k) = \frac{1}{\lam}\inner{\hat x_k}{\hat x_{k-1}-\hat x_k}.
\end{equation}
Using the above identity and Lemma \ref{ser-lemma1}(a) and (c), we have for every $k \ge 1$,
\begin{align*}
& \phi(\tilde x_k) + f^*(s_k) +  {h}^*(g_k) \le \phi(\tilde x_k) + \Gamma_k^*(s_k) +  {h}^*(g_k) \\
\le & \bar \varepsilon +(\Gamma_k + h)(\hat x_k) + \frac{1}{2\lam}\|\hat x_k-\hat x_{k-1}\|^2 + \Gamma_k^*(s_k) + {h}^*(g_k)\\
\stackrel{\eqref{eq:Gamma-h}}{=}& \bar \varepsilon + \frac{1}{2\lam}(\|\hat x_{k-1}\|^2-\|\hat x_k\|^2).
\end{align*}
Replacing the index $k$ in the above inequality by $i$, summing the resulting inequality from $i=1$ to $k$, and using convexity and the definitions in \eqref{eq:bxk}, we obtain 
\begin{equation}\label{ineq:avg}
    \phi(\bar x_k) + f^*(\bar s_k) + h^*(\bar g_k)\le \bar \varepsilon + \frac{1}{2\lam k }\left(\|\hat x_0\|^2-\|\hat x_k\|^2\right),
\end{equation}
where $\bar g_k = (\sum_{i=1}^k g_i)/k$.
Define
\begin{equation}\label{eq:zeta}
    \eta_k(u) = \frac{1}{2\lam k} \|u-\hat x_0\|^2, \quad \hat \eta_k(u) = \eta_k(u) - I_Q(u).
\end{equation}
Noting that $\nabla \eta_k(\hat x_k) = (\hat x_k - \hat x_0)/(\lam k) = -\bar g_k - \bar s_k$, and hence it follows from Theorem 4.20 of \cite{Beck2017} that
\[
\eta_k^*(-\bar g_k - \bar s_k) = \frac{1}{\lam k}\inner{\hat x_k-\hat x_0}{\hat x_k} - \eta_k(\hat x_k) = \frac{1}{2\lam k}\left(\|\hat x_k\|^2 - \| \hat x_0\|^2\right).
\]
The above observation and \eqref{ineq:avg} together imply that
\begin{equation}\label{ineq:bar}
    \phi(\bar x_k) + f^*(\bar s_k) + h^*(\bar g_k) + \eta_k^*(-\bar g_k - \bar s_k) \le \bar \varepsilon.
\end{equation}
It follows from Theorem 4.17 of \cite{Beck2017} and the definition of infimum convolution in \eqref{def:infconv} that
\[
(h + \eta_k)^*(-\bar s_k) = (h^* \square \eta_k^*)(-\bar s_k)
\stackrel{\eqref{def:infconv}}= \min_{u\in \R^n}\{h^*(u) + \eta_k^*(-\bar s_k-u)\}
\le h^*(\bar g_k) + \eta_k^*(-\bar g_k - \bar s_k).
\]
Noting from \eqref{eq:zeta} that $\hat h = h + \eta_k -\hat \eta_k$ and applying Theorem 4.17 of \cite{Beck2017} again, we obtain
\begin{align*}
    \hat h ^*(-\bar s_k) &= [(h + \eta_k)^* \square (-\hat \eta_k)^*](-\bar s_k)
= \min_{u\in \R^n}\{(h + \eta_k)^*(u) + (-\hat \eta_k)^*(-\bar s_k-u)\}\\
&\le (h + \eta_k)^*(-\bar s_k) + (-\hat \eta_k)^*(0).
\end{align*}
Summing the above two inequalities, we have
\[
\hat h ^*(-\bar s_k) \le h^*(\bar g_k) + \eta_k^*(-\bar g_k - \bar s_k) + (-\hat \eta_k)^*(0),
\]
which together with \eqref{ineq:bar} implies that
\[
\phi(\bar x_k) + f^*(\bar s_k) + \hat h ^*(-\bar s_k) \le \bar \varepsilon + (-\hat \eta_k)^*(0).
\]
It follows from \eqref{eq:zeta} that
\[
(-\hat \eta_k)^*(0) = \max_{u \in \R^n}\left\{\inner{0}{u} - \left( -\frac{\|u-\hat x_0\|^2}{2\lam k} +I_Q(u)\right)\right\}
=\frac{\max_{u \in Q}\|u-\hat x_0\|^2}{2\lam k}
= \frac{18d_0^2}{\lam k},
\]
where the last identity follows from the fact that $Q = B(\hat x_0, 6d_0)$.
Therefore, \eqref{ineq:conjugate} holds in view of the above two relations.
\end{proof}

The next lemma is a technical result showing that $\hat x_k \in Q$ and $\tx_k \in Q$ under mild conditions, where $Q = B(\hat x_0,6d_0)$.

\begin{lemma}\label{lem:bound}
    Given $(\hat x_0,\bar \varepsilon) \in \R^n \times \R_{++}$, if $\lam \le 2d_0^2/\bar \varepsilon$ and $k \le 2d_0^2/(\lam \bar \varepsilon)$, then the sequences $\{\hat x_k\}$ and $\{\tx_k\}$ generated by PDPB$(\hat x_0,\lam, \bar \varepsilon)$ satisfy
    \begin{equation}\label{incl:xk}
        \hat x_k \in Q, \quad \tx_k \in Q.
    \end{equation}
\end{lemma}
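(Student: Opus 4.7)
The plan is to carry out an inexact proximal point descent argument and then close both inclusions in \eqref{incl:xk} by the triangle inequality. The key intermediate claim I will prove by induction on $k$ is
\[
\|x_0^* - \hat x_k\|^2 \le d_0^2 + 2k\lam\bar\varepsilon,
\]
which, under the hypothesis $k \le 2d_0^2/(\lam\bar\varepsilon)$, forces $\|x_0^* - \hat x_k\| \le \sqrt{5}\, d_0$ and hence $\|\hat x_k - \hat x_0\| \le (\sqrt{5}+1) d_0 < 6 d_0$. The separate hypothesis $\lam \le 2 d_0^2/\bar\varepsilon$ is there only to ensure that the admissible range $\{k : k \le 2 d_0^2/(\lam \bar\varepsilon)\}$ is nonempty (i.e., already contains $k=1$).

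For the one-step recursion, I will test the $\lam^{-1}$-strong convexity of the subproblem \eqref{eq:xk} at $u = x_0^*$ and invoke $\Gamma_k \le f$ from Lemma~\ref{ser-lemma1}(a) to obtain
\[
m_k + \frac{1}{2\lam}\|x_0^* - \hat x_k\|^2 \le (\Gamma_k+h)(x_0^*) + \frac{1}{2\lam}\|x_0^* - \hat x_{k-1}\|^2 \le \phi_* + \frac{1}{2\lam}\|x_0^* - \hat x_{k-1}\|^2.
\]
The only remaining ingredient is an upper bound on $\phi_* - m_k$, which Lemma~\ref{ser-lemma1}(c) supplies immediately: since $\phi_* \le \phi(\tilde x_k) \le \phi^\lam(\tilde x_k) \le m_k + \bar\varepsilon$, one has $\phi_* - m_k \le \bar\varepsilon$. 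Substituting back yields the one-step bound $\|x_0^* - \hat x_k\|^2 \le \|x_0^* - \hat x_{k-1}\|^2 + 2\lam\bar\varepsilon$, and induction from $\|x_0^* - \hat x_0\| = d_0$ produces the displayed estimate and hence the first inclusion in \eqref{incl:xk}.

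For the second inclusion, I will use Lemma~\ref{ser-lemma1}(c) once more, but this time paired with the upper bound $m_k \le \phi_* + \|x_0^* - \hat x_{k-1}\|^2/(2\lam)$ (same derivation as above) and the trivial $\phi(\tilde x_k) \ge \phi_*$. Subtracting $\phi_*$ and multiplying by $2\lam$ gives
\[
\|\tilde x_k - \hat x_{k-1}\|^2 \le 2\lam\bar\varepsilon + \|x_0^* - \hat x_{k-1}\|^2 \le 2k\lam\bar\varepsilon + d_0^2 \le 5 d_0^2.
\]
Chaining with the first inclusion applied at index $k-1$, namely $\|\hat x_{k-1} - \hat x_0\| \le (\sqrt{5}+1) d_0$ (vacuously true when $k=1$), a final triangle inequality produces $\|\tilde x_k - \hat x_0\| \le (2\sqrt{5}+1)d_0 < 6 d_0$. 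The only real obstacle is constant bookkeeping: the radius $6 d_0$ in the definition of $Q$ is calibrated precisely so that the compound factor $2\sqrt{5}+1 \approx 5.47$ arising from chaining two triangle inequalities still fits strictly inside it.
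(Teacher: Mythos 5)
Your proof is correct and follows essentially the same route as the paper's: the $\lam^{-1}$-strong-convexity inequality for the subproblem tested at $u = x_0^*$, combined with the cycle termination bound $\phi^\lam(\tx_k) \le \bar\varepsilon + m_k$ from Lemma~\ref{ser-lemma1}(c), telescoped and then closed by triangle inequalities. The only cosmetic difference is that you bound $\|\tx_k - \hat x_{k-1}\|$ directly from the termination bound and chain through $\hat x_{k-1}$, whereas the paper tests the strong-convexity inequality at $u = \tx_k$ to bound $\|\tx_k - \hat x_k\|$ and chains through $\hat x_k$; this changes the constants ($2\sqrt{5}+1$ versus the paper's $4+\sqrt{2\lam\bar\varepsilon}/d_0$) but both fit within the radius $6d_0$.
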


\begin{proof}
    Noticing that the objective function in \eqref{eq:xk} is $\lam^{-1}$-strongly convex, it thus follows from Theorem 5.25(b) of \cite{Beck2017} that for every $u \in \dom h$,
    \begin{equation}\label{ineq:key}
        m_k + \frac{1}{2\lam} \|u-\hat x_{k}\|^2 \le \Gamma_k(u) + h(u) + \frac1{2\lam}\|u-\hat x_{k-1}\|^2 \le \phi(u)+ \frac1{2\lam}\|u-\hat x_{k-1}\|^2, 
    \end{equation}
    where the second inequality follows from the first one in Lemma \ref{ser-lemma1}(a).
    Taking $u=x_0^*$ in \eqref{ineq:key}, we have
    \[
    m_k + \frac{1}{2\lam} \|\hat x_k-x_0^*\|^2 \le \phi_* + \frac1{2\lam}\|\hat x_{k-1}-x_0^*\|^2. 
    \]
    This inequality and Lemma \ref{ser-lemma1}(c) then imply that
    \begin{align*}
        \frac{1}{2\lam}\|\hat x_k - x_0^*\|^2 &\le \phi(\tx_k)- \phi_* + \frac{1}{2\lam}\|\hat x_k - x_0^*\|^2 \\
        &\le \phi(\tx_k) - m_k + \frac{1}{2\lam}\|\hat x_{k-1}-x_0^*\|^2 \le \bar \varepsilon + \frac{1}{2\lam}\|\hat x_{k-1}-x_0^*\|^2.
    \end{align*}
    Replacing the index $k$ in the above inequality by $i$ and summing the resulting inequality from $i=1$ to $k$, we have
    \[
    \|\hat x_k - x_0^*\|^2 \le \|\hat x_0-x_0^*\|^2 + 2 k \lam \bar \varepsilon.
    \]
    Using the fact that $\sqrt{a+b} \le \sqrt{a} + \sqrt{b}$ for $a,b \ge 0$ and the assumption that $k \le 2d_0^2/(\lam \bar \varepsilon)$, we further obtain
    \begin{equation}\label{ineq:xk}
        \|\hat x_k - x_0^*\| \le d_0 + \sqrt{2k\lam \bar \varepsilon} \le 3d_0.
    \end{equation}
    Taking $u=\tx_k$ in \eqref{ineq:key} and using Lemma \ref{ser-lemma1}(c), we have
    \[
    \frac{1}{2\lam} \|\tx_k-\hat x_k\|^2 \le \phi(\tx_k) + \frac1{2\lam}\|\tx_k - \hat x_{k-1}\|^2 - m_k \le \bar \varepsilon.
    \]
    Under the assumption that $\lam \le 2d_0^2/\bar \varepsilon$, using \eqref{ineq:xk}, the above inequality, and the triangle inequality, we have
    \begin{align*}
        \|\hat x_k - \hat x_0\| &\le \|\hat x_k - x_0^*\| + \|x_0^* - \hat x_0\| \stackrel{\eqref{ineq:xk}}{\le} 4d_0, \\
        \|\tx_k - \hat x_0\| &\le \|\hat x_k - \hat x_0\| + \|\tx_k - \hat x_k\| \le 4d_0 + \sqrt{2\lam \bar \varepsilon} \le 6d_0.
    \end{align*}
    Hence, \eqref{incl:xk} follows immediately.
\end{proof}

Now we are ready to present the number of oracle calls to PDCP in PDPB (i.e., Algorithm~\ref{alg:PDPB}).

\begin{proposition}\label{prop:pdpb}
    Given $(\hat x_0,\bar \varepsilon) \in \R^n \times \R_{++}$, if $\lam \le 2d_0^2/\bar \varepsilon$, then the number of iterations for PDPB$(\hat x_0,\lam, \bar \varepsilon)$ to generate $(\bar x_k, \bar s_k)$ satisfying 
    \begin{equation}\label{ineq:pd}
        \hat \phi(\bar x_k) + f^*(\bar s_k) + \hat h^*(-\bar s_k) \le 10\bar \varepsilon
    \end{equation}
    is at most $2 d_0^2/(\lam \bar \varepsilon)$.
\end{proposition}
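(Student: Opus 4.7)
The plan is to combine Proposition \ref{prop:pd-key}, which already provides a primal-dual gap bound for $\phi$ (the \emph{unrestricted} objective), with Lemma \ref{lem:bound}, which guarantees that the generated iterates remain in $Q = B(\hat x_0, 6d_0)$. The connection between the two uses the fact that $\hat{h} = h + I_Q$, so whenever $\bar{x}_k \in Q$ we have $\hat{\phi}(\bar{x}_k) = \phi(\bar{x}_k)$, allowing us to swap $\phi$ for $\hat{\phi}$ in the gap bound.

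First I would set $k = \lfloor 2 d_0^2/(\lam \bar\varepsilon) \rfloor$ (or take this exact value as the target iteration count). Since by hypothesis $\lam \le 2 d_0^2/\bar\varepsilon$ and by construction $k \le 2 d_0^2/(\lam \bar\varepsilon)$, both hypotheses of Lemma \ref{lem:bound} are satisfied, so $\tx_i \in Q$ for every $i = 1,\dots, k$. Next, because $Q$ is convex and $\bar{x}_k = \frac{1}{k}\sum_{i=1}^k \tx_i$ by \eqref{eq:bxk}, we conclude $\bar{x}_k \in Q$, hence $I_Q(\bar{x}_k) = 0$ and $\hat{\phi}(\bar{x}_k) = \phi(\bar{x}_k)$.

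Applying Proposition \ref{prop:pd-key} then yields
\[
\hat{\phi}(\bar{x}_k) + f^*(\bar{s}_k) + \hat{h}^*(-\bar{s}_k)
\;=\; \phi(\bar{x}_k) + f^*(\bar{s}_k) + \hat{h}^*(-\bar{s}_k)
\;\le\; \bar\varepsilon + \frac{18 d_0^2}{\lam k}.
\]
Substituting $k = 2 d_0^2/(\lam \bar\varepsilon)$ gives $18 d_0^2/(\lam k) = 9\bar\varepsilon$, so the right-hand side is $10\bar\varepsilon$, establishing \eqref{ineq:pd}.

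There is no real obstacle here; the statement is essentially a bookkeeping combination of the two earlier results. The only slightly delicate point is observing that swapping $\phi$ for $\hat{\phi}$ on the primal side is legitimate, which relies on the convexity of $Q$ ensuring that the average $\bar{x}_k$ of points in $Q$ stays in $Q$ — this is precisely why the auxiliary sequence $\{\tx_i\}$, rather than only $\{\hat{x}_i\}$, was built into PDCP and PDPB and why Lemma \ref{lem:bound} was stated for both $\hat{x}_k$ and $\tx_k$.
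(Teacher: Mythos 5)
Your proof is correct and follows essentially the same route as the paper: both arguments use Lemma \ref{lem:bound} together with the convexity of $Q$ to place $\bar x_k$ in $Q$, so that $\hat\phi(\bar x_k)=\phi(\bar x_k)$, and then invoke Proposition \ref{prop:pd-key} with $k=2d_0^2/(\lam\bar\varepsilon)$ to turn the $18d_0^2/(\lam k)$ term into $9\bar\varepsilon$. The only difference is that you spell out the final arithmetic, which the paper leaves implicit.
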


\begin{proof}
    Since $Q$ is a convex set, it follows from the definition of $\bar x_k$ in \eqref{eq:bxk} and Lemma \ref{lem:bound} that $\bar x_k \in Q$ for every $k \le 2d_0^2/(\lam \bar \varepsilon)$.
    This observation and the fact that $\hat h = h+ I_Q$ imply that $\hat h(\bar x_k) = h(\bar x_k)$.
    Hence, using Proposition~\ref{prop:pd-key}, we have for every $k \le 2d_0^2/(\lam \bar \varepsilon)$,
    \[
    \hat \phi(\bar x_k) + f^*(\bar s_k) + \hat h ^*(-\bar s_k) \le \bar \varepsilon + \frac{18 d_0^2}{\lam k}.
    \]
    Therefore, the conclusion of the proposition follows immediately.
\end{proof}

The following lemma is a technical result providing a universal bound on the first gap $t_{i_k}$ for each cycle ${\cal C}_k$.

\begin{lemma}\label{lem:t1}
For $k \le 2d_0^2/(\lam \bar \varepsilon)$, we have
\begin{equation}\label{ineq:t1}
    t_{i_k} \le \bar{t}:= 4M(3d_0 + \lam M),
\end{equation}
where $i_k$ is the first iteration index in the cycle ${\cal C}_k$.
\end{lemma}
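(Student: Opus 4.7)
The argument reduces in two moves. First, I would unpack $t_{i_k}$ in the local coordinates of the $k$-th call to PDCP, which is invoked with prox center $x_0 = \hat x_{k-1}$, initial bundle $\Gamma_1(\cdot) = \ell_f(\cdot;\hat x_{k-1})$, and $\tx_1 = x_1$. A direct substitution into the definition \eqref{eq:mj} collapses $t_{i_k}$ to
\[
t_{i_k} = \phi^\lam(x_1) - m_1 = f(x_1) - \ell_f(x_1;\hat x_{k-1}),
\]
so \eqref{ineq:f} gives $t_{i_k} \le 2M\|x_1 - \hat x_{k-1}\|$. Hence it suffices to prove $\|x_1 - \hat x_{k-1}\| \le 6 d_0 + 2\lam M$.

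Second, I would bound $\|x_1 - \hat x_{k-1}\|$ by comparing $x_1$ with $x_0^*$ through the $\lam^{-1}$-strong convexity of the $j=1$ proximal subproblem. Specializing the argument that produces \eqref{ineq:key} to the first iteration, with test point $u = x_0^*$, and using $\Gamma_1 = \ell_f(\cdot;\hat x_{k-1}) \le f$, yields
\[
m_1 + \frac{1}{2\lam}\|x_1 - x_0^*\|^2 \le \phi_* + \frac{1}{2\lam}\|\hat x_{k-1} - x_0^*\|^2.
\]
The key auxiliary step is bounding $\phi_* - m_1$ \emph{without} the termination criterion $t_{j_k} \le \bar\varepsilon$, which is unavailable at the first iteration of a cycle. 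For this, I would use $\phi_* \le \phi(x_1)$ together with \eqref{ineq:f} and the explicit form of $m_1$ to obtain
\[
\phi_* - m_1 \le 2M\|x_1 - \hat x_{k-1}\| - \frac{1}{2\lam}\|x_1 - \hat x_{k-1}\|^2 \le 2\lam M^2,
\]
where the last inequality is Young's (equivalently, maximization of the preceding quadratic in $\|x_1 - \hat x_{k-1}\|$).

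Combining these pieces with $\|\hat x_{k-1} - x_0^*\| \le 3 d_0$---which is precisely the inequality \eqref{ineq:xk} established inside the proof of Lemma~\ref{lem:bound} applied at index $k-1 \le 2d_0^2/(\lam\bar\varepsilon)$, and is trivial when $k=1$ since $\|\hat x_0 - x_0^*\| = d_0$---gives $\|x_1 - x_0^*\|^2 \le 4\lam^2 M^2 + 9 d_0^2$, hence $\|x_1 - x_0^*\| \le 2\lam M + 3 d_0$, and then the triangle inequality delivers $\|x_1 - \hat x_{k-1}\| \le 2\lam M + 6 d_0$. Feeding this back into the first display yields $t_{i_k} \le 2M(2\lam M + 6 d_0) = 4M(3 d_0 + \lam M) = \bar t$.

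\textbf{Main obstacle.} The only real subtlety is the second step: at the first iteration of a cycle one cannot appeal to the termination criterion to bound $\phi_* - m_1$, so the bound has to be extracted from \eqref{ineq:f} and the strong-convexity term $\|x_1-\hat x_{k-1}\|^2/(2\lam)$ already carried in $m_1$. The Young's-inequality collapse is exactly what converts the linear-in-distance term into the additive $\lam M^2$ component of $\bar t$; the rest of the argument is bookkeeping.
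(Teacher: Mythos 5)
Your proposal is correct and follows essentially the same route as the paper's proof: reduce $t_{i_k}$ to $f(x_{i_k})-\ell_f(x_{i_k};\hat x_{k-1})\le 2M\|x_{i_k}-\hat x_{k-1}\|$, bound $\|x_{i_k}-x_0^*\|$ via the $\lam^{-1}$-strong convexity of the first proximal subproblem together with the Young's-inequality bound $2M a - a^2/(2\lam)\le 2\lam M^2$ (the paper packages this as the PDS-type inequality of Lemma~\ref{pds-lemma1}), and finish with \eqref{ineq:xk} from Lemma~\ref{lem:bound} and the triangle inequality. The constants and the handling of the missing termination criterion at the start of a cycle match the paper exactly.
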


\begin{proof}
Using \eqref{ineq:f}, definitions of $m_j$ and $t_j$ in \eqref{eq:mj}, and the facts that $\tx_{i_k}=x_{i_k}$ and $\Gamma_{i_k}=\ell_f(\cdot;x_{k-1})$, we have
\begin{align}
    t_{i_k} &\stackrel{\eqref{eq:mj}}{=} \phi^\lam \left(\tx_{i_k}\right)-m_{i_k} = \phi^\lam (x_{i_k}) -m_{i_k} \nn \\
    &\stackrel{\eqref{eq:problem},\eqref{eq:phi-lam},\eqref{eq:mj}}{=} f(x_{i_k})-\ell_f(x_{i_k};\hat x_{k-1}) \stackrel{\eqref{ineq:f}}{\le} 2M\left\|x_{i_k}-\hat x_{k-1}\right\|, \label{ineq:tik}
\end{align}
where we have also used the definitions of $\phi$ and $\phi^\lam$ in \eqref{eq:problem} and \eqref{eq:phi-lam}, respectively, in the last identity.
In view of \eqref{eq:xj} and the fact that $\Gamma_{i_k}=\ell_f(\cdot;\hat x_{k-1})$, we know the first iteration of PDCP is the same as PDS$(\hat x_0,\lam)$ (see \eqref{eq:pds}). Hence, following an argument similar to the proof of Lemma~\ref{pds-lemma1}, we can prove for every $u \in \dom h$,
\[
\phi(x_{i_k}) - \ell_f(u;\hat x_{k-1}) -h(u)
\stackrel{\eqref{ineq:hat-phi-recur}}{\le} 2\lam M^2 + \frac{1}{2 \lambda}\|u -\hat x_{k-1}\|^2 - \frac{1}{2\lambda}\left\|u-x_{i_k}\right\|^2.
\]
It follows from the above inequality with $u=x_0^*$ and the convexity of $f$ that
\begin{align*}
    0 &\le \phi(x_{i_k}) -\phi_*
    \le \phi(x_{i_k}) - \ell_f(x_0^*;\hat x_{k-1}) -h(x_0^*) \\
    &\le 2\lam M^2 + \frac{1}{2 \lambda}\|x_0^* -\hat x_{k-1}\|^2 - \frac{1}{2\lambda}\left\|x_0^*-x_{i_k}\right\|^2.
\end{align*}
Rearranging the terms and using the inequality $\sqrt{a+b} \le \sqrt{a} + \sqrt{b}$ for any $a, b \ge 0$, we have
\[
\|x_0^*-x_{i_k}\| \le\|x_0^*-\hat x_{k-1}\|+2\lam M.
\]
This inequality and the triangle inequality then imply that
\[
\|x_{i_k}-\hat x_{k-1}\| \le \|x_{i_k}-x_0^*\|+\|x_0^*-\hat x_{k-1}\| \le 2\|\hat x_{k-1}-x_0^*\|+2 \lam M.
\]
Recall from the proof of Lemma \ref{lem:bound} that \eqref{ineq:xk} gives $\|\hat x_k-x_0^*\| \le 3d_0$ for $k \le 2d_0^2/(\lam \bar \varepsilon)$.
Hence, we have
\[
\|x_{i_k}-\hat x_{k-1}\| \le 2(3d_0 + \lam M).
\]
Therefore, \eqref{ineq:t1} follows from \eqref{ineq:tik} and the above inequality.
\end{proof}

Finally, we are ready to establish the total iteration-complexity of PDPB.

\begin{theorem}
    Given $(\hat x_0,\bar \varepsilon) \in \R^n \times \R_{++}$, assuming that $\lam$ satisfies
    \begin{equation}\label{ineq:lam}
        \frac{\sqrt{\bar \varepsilon d_0}}{M^{3/2}} \le \lam \le \frac{2d_0^2}{\bar \varepsilon},
    \end{equation}
    then the total iteration-complexity of PDPB$(\hat x_0,\lam,\bar \varepsilon)$ to find $(\bar x_k,\bar s_k)$ satisfying \eqref{ineq:pd} is
    \begin{equation}\label{cmplx:PDPB}
        {\cal O}\left( \frac{M^2 d_0^2}{\bar \varepsilon^2} + 1\right).
    \end{equation}
\end{theorem}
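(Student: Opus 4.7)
The plan is to combine the outer-cycle count from Proposition~\ref{prop:pdpb} with the inner-iteration bound from Proposition~\ref{prop:inner}, using Lemma~\ref{lem:t1} to obtain a uniform bound on the initial gap $t_{i_k}$ of each cycle.

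First, since the upper inequality in \eqref{ineq:lam} gives $\lam \le 2d_0^2/\bar\varepsilon$, Proposition~\ref{prop:pdpb} guarantees that PDPB produces $(\bar x_k,\bar s_k)$ satisfying \eqref{ineq:pd} after at most $K := 2d_0^2/(\lam \bar\varepsilon)$ outer cycles. Thus it suffices to bound the total number of PDCP iterations accumulated over these $K$ cycles.

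Next, within the $k$-th cycle, Proposition~\ref{prop:inner} yields at most ${\cal O}(\sqrt{t_{i_k}/\bar\varepsilon}+\lam M^2/\bar\varepsilon+1)$ PDCP iterations, where $t_{i_k}$ is the initial gap of that cycle. Lemma~\ref{lem:t1} shows $t_{i_k}\le \bar t = 4M(3d_0+\lam M)$ uniformly over $k\le K$, so plugging in and using $\sqrt{a+b}\le \sqrt a+\sqrt b$ bounds each cycle by
\[
N := {\cal O}\!\left( \sqrt{\frac{M d_0}{\bar\varepsilon}} + \sqrt{\frac{\lam M^2}{\bar\varepsilon}} + \frac{\lam M^2}{\bar\varepsilon} + 1 \right)
\]
PDCP iterations. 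The total iteration-complexity is therefore $KN$, and I would multiply out:
\[
KN = {\cal O}\!\left( \frac{d_0^2 \sqrt{Md_0}}{\lam\bar\varepsilon^{3/2}} + \frac{d_0^2 M}{\sqrt{\lam}\,\bar\varepsilon^{3/2}} + \frac{d_0^2 M^2}{\bar\varepsilon^2} + \frac{d_0^2}{\lam\bar\varepsilon} \right).
\]

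The main obstacle — and the raison d'\^etre of the lower bound $\lam \ge \sqrt{\bar\varepsilon d_0}/M^{3/2}$ — is showing that every term above collapses into ${\cal O}(M^2 d_0^2/\bar\varepsilon^2 + 1)$. I would substitute this lower bound into each of the three non-target terms. Writing $\rho := Md_0/\bar\varepsilon$, a direct calculation shows that the first term is bounded by $\rho^2$, while the second and fourth terms are respectively ${\cal O}(\rho^{7/4})$ and ${\cal O}(\rho^{3/2})$; in the regime $\rho\ge 1$ these are dominated by $\rho^2$, and in the regime $\rho<1$ they are at most a constant, hence absorbed by the ``$+1$'' in \eqref{cmplx:PDPB}. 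Combining the four contributions then yields \eqref{cmplx:PDPB}.
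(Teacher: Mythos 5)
Your proposal is correct and follows essentially the same route as the paper: Proposition~\ref{prop:pdpb} for the cycle count, Proposition~\ref{prop:inner} together with Lemma~\ref{lem:t1} for the per-cycle bound, and the two sides of \eqref{ineq:lam} to collapse the product into \eqref{cmplx:PDPB}. The only difference is bookkeeping — the paper first absorbs $\sqrt{Md_0/\bar\varepsilon}$ into $\lam M^2/\bar\varepsilon$ via the lower bound on $\lam$ and then multiplies, whereas you multiply out first and bound the four resulting terms — and both yield the same conclusion.
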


\begin{proof}
    In view of Proposition \ref{prop:pdpb}, PDPB takes 
    \begin{equation}\label{cmplx:outer}
        {\cal O}\left(\frac{d_0^2}{\lam \bar \varepsilon}+1\right)
    \end{equation}
    cycles to find $(\bar x_k,\bar s_k)$ satisfying \eqref{ineq:pd}.
    It follows from Proposition \ref{prop:inner} and Lemma \ref{lem:t1} that for every cycle in PDPB before termination, the number of iterations in the cycle is 
    \[
    {\cal O}\left(\frac{\sqrt{Md_0 + \lam M^2}}{\sqrt{\bar \varepsilon}} + \frac{\lam M^2}{\bar \varepsilon} + 1\right)=
    {\cal O}\left(\frac{\sqrt{Md_0}}{\sqrt{\bar \varepsilon}} + \frac{\lam M^2}{\bar \varepsilon} + 1\right),
    \]
    which together with the assumption that $\sqrt{\bar \varepsilon d_0}/M^{3/2} \le \lam$ becomes
    \begin{equation}\label{cmplx:inner}
        {\cal O}\left(\frac{\lam M^2}{\bar \varepsilon} + 1\right).
    \end{equation}
    Combining \eqref{cmplx:outer} and \eqref{cmplx:inner}, and using \eqref{ineq:lam}, we conclude that \eqref{cmplx:PDPB} holds.
\end{proof}

\section{Duality between PDCP and CG}\label{sec:FW}

The dual problem of the proximal subproblem \eqref{eq:phi-lam} can be written as
\begin{equation}\label{eq:dual}
    \min_{z\in\R^n}\left\{\psi(z): = (h^\lam)^*(-z) + f^*(z) \right\},
\end{equation}
where $-\psi$ is the dual function of $\phi^\lam$ given by \eqref{eq:phi-lam} and $h^\lam$ is as in \eqref{eq:hlam}. Since $h^\lam$ is $\lam^{-1}$-strongly convex, $(h^\lam)^*$ is $\lam$-smooth and one possible algorithm to solve \eqref{eq:dual} is the CG method.

We describe CG for solving \eqref{eq:dual} below.

\begin{algorithm}[H]
\caption{Conditional Gradient for \eqref{eq:dual}, CG$(z_1)$}
\begin{algorithmic}\label{alg:CG}
\REQUIRE  given $z_1 \in \dom f^* $
\FOR{$j=1,2,\cdots$}
\STATE
\vspace{-8mm}
\begin{align}
    & \bar z_j  =  \underset{z \in \R^n}\argmin\left\{\inner{-\nabla ( h ^\lam)^* (-z_j)}{z} + f^*(z)\right\}, \label{eq:bar-zj}\\
    & z_{j+1} = \tau_j z_j + (1-\tau_j) \bar z_j.\label{eq:z+} 
\end{align}
\vspace{-8mm}
\ENDFOR
\end{algorithmic} 
\end{algorithm}

Motivated by the duality between the mirror descent/subgradient method and CG studied in \cite{bach2015duality}, we prove the nice connection between CG (i.e., Algorithm \ref{alg:CG}) and PDCP (i.e., Algorithm \ref{alg:PDCP}) via duality.
More specifically, we consider a specific implementation of GBM within PDCP, that is $\Gamma_j$ is updated as
\begin{equation}\label{eq:one-cut}
    \Gamma_{j+1}(\cdot) = \tau_j \Gamma_j(\cdot) + (1-\tau_j)\ell_f(\cdot; x_j).
\end{equation}
Since $\Gamma_1(\cdot) = \ell_f(\cdot;x_0)$, $\Gamma_j$ is always affine and $s_j = \nabla \Gamma_j$ in view of \eqref{eq:xj}. 

The following result reveals the duality between PDCP with update scheme \eqref{eq:one-cut} and CG. Since the tolerance $\bar \varepsilon$ is not important in the discussion below, we will ignore it as input to PDCP. Assuming $\lam$ in both PDCP and CG are the same, we only focus on the initial points of the two methods. Hence, we denote them by PDCP$(x_0)$ and CG$(z_1)$.

\begin{theorem}\label{thm:duality}
Given $x_0\in \R^n$, $z_1 = f'(x_0)$, and the sequence $\{\tau_j\}$, then PDCP$(x_0)$ with update scheme \eqref{eq:one-cut} for solving \eqref{eq:phi-lam} and CG$(z_1)$ for solving \eqref{eq:dual} have the following correspondence for every $j\ge 1$,
\begin{equation}\label{eq:correspond}
    s_j = z_j, \quad x_j = \nabla ( h ^\lam)^* (-z_j), \quad f'(x_j) = \bar z_j.
\end{equation}
\end{theorem}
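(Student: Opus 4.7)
The plan is to argue by induction on $j$, using the fact that under the one-cut update scheme \eqref{eq:one-cut} with $\Gamma_1(\cdot)=\ell_f(\cdot;x_0)$, the cutting-plane model $\Gamma_j$ remains affine throughout, so that $\nabla \Gamma_j$ is a well-defined constant vector and coincides with $s_j$ from \eqref{eq:xj}. Fenchel--Young duality (Theorem 4.20 of \cite{Beck2017}) provides the bridge: the optimality condition $-s_j\in\partial h^\lam(x_j)$ for the proximal subproblem in \eqref{eq:xj} is equivalent to $x_j=\nabla(h^\lam)^*(-s_j)$, since $h^\lam$ is $\lam^{-1}$-strongly convex.

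For the base case $j=1$, I would simply unpack definitions: $\Gamma_1=\ell_f(\cdot;x_0)$ has gradient $f'(x_0)=z_1$, which gives $s_1=z_1$; the prox-step \eqref{eq:xj} combined with Fenchel--Young gives $x_1=\nabla(h^\lam)^*(-z_1)$; and since $\bar z_1$ by \eqref{eq:bar-zj} solves $\min_z\{-\langle x_1,z\rangle+f^*(z)\}$, its optimality condition $x_1\in\partial f^*(\bar z_1)$ is equivalent to $\bar z_1\in\partial f(x_1)$, which identifies $\bar z_1=f'(x_1)$.

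For the inductive step, assuming the three correspondences in \eqref{eq:correspond} at index $j$, I would first verify $s_{j+1}=z_{j+1}$ by differentiating the affine combination \eqref{eq:one-cut}: since $\nabla \Gamma_j=s_j=z_j$ and $\nabla \ell_f(\cdot;x_j)=f'(x_j)=\bar z_j$, we get
\[
s_{j+1}=\nabla \Gamma_{j+1}=\tau_j z_j+(1-\tau_j)\bar z_j,
\]
which is exactly the CG averaging step \eqref{eq:z+}. The identity $x_{j+1}=\nabla(h^\lam)^*(-z_{j+1})$ then follows verbatim from the base-case argument applied at $j+1$, and $\bar z_{j+1}=f'(x_{j+1})$ follows from the same Fenchel--Young reasoning applied to the linear minimization \eqref{eq:bar-zj} at step $j+1$.

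The only mildly subtle point is ensuring the identifications $s_j\in\partial\Gamma_j(x_j)$ from \eqref{eq:xj} and $s_j=\nabla\Gamma_j$ are compatible: this requires that $\Gamma_j$ stays affine under \eqref{eq:one-cut}, which is immediate from the initialization together with the fact that affinity is preserved under convex combinations with affine linearizations. With that observation in hand, every step of the induction reduces to a one-line application of Fenchel--Young, and the correspondence \eqref{eq:correspond} propagates from $j$ to $j+1$ without further obstacle.
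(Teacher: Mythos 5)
Your proposal is correct and follows essentially the same route as the paper: the paper likewise observes that $\Gamma_j$ stays affine under \eqref{eq:one-cut} so $s_j=\nabla\Gamma_j$, proves the relation $s_j=z_j$ by induction via the computation $s_{j+1}=\tau_j z_j+(1-\tau_j)\bar z_j=z_{j+1}$, and derives the other two identities from the optimality conditions of \eqref{eq:xj} and \eqref{eq:bar-zj}. The only difference is organizational --- the paper first shows that the first relation implies the other two and then inducts on the first relation alone, whereas you carry all three through the induction --- which is immaterial.
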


\begin{proof}
We first show that the first relation in \eqref{eq:correspond} implies the other two in \eqref{eq:correspond}.
Using the definition of $x_j$ in \eqref{eq:xj}, the fact that $s_j=\nabla \Gamma_j$, and the first relation in \eqref{eq:correspond}, we have $x_j$ from PDCP is equivalent to
\[
x_j \stackrel{\eqref{eq:xj}}= \underset{x\in \R^n}\argmin \{\inner{s_j}{x} + h^\lam(x)\} \stackrel{\eqref{eq:correspond}}= \underset{x \in \R^n}\argmax\{\inner{-z_j}{x} - h^\lam(x)\},
\]
which implies that the second relation in \eqref{eq:correspond} holds.
The last one in \eqref{eq:correspond} similarly follows from the second relation and \eqref{eq:bar-zj}.

We next prove the first relation in \eqref{eq:correspond} by induction. For the case $j=1$, it is easy to see from $\Gamma_1(\cdot) = \ell_f(\cdot;x_0)$ that 
\[
s_1=\nabla \Gamma_1 = f'(x_0) = z_1.
\]
Assume that the first relation in \eqref{eq:correspond} holds for some $j\ge 1$. By the argument above, we know that the second and third relations in \eqref{eq:correspond} also hold for $j$.
Using the fact that $s_j=\nabla \Gamma_j$, the definition of $\Gamma_{j+1}$ in \eqref{eq:one-cut}, and the last two relations in \eqref{eq:correspond}, we obtain
\begin{align*}
    s_{j+1} &= \nabla \Gamma_{j+1} \stackrel{\eqref{eq:one-cut}}= \tau_j \nabla \Gamma_j + (1-\tau_j) f'(x_j) \\
    &\stackrel{\eqref{eq:correspond}}= \tau_j s_j + (1-\tau_j) \bar z_j \stackrel{\eqref{eq:correspond}}= \tau_j z_j + (1-\tau_j) \bar z_j \stackrel{\eqref{eq:z+}}= z_{j+1},
\end{align*}
where the last identity is due to \eqref{eq:z+}.
Hence, the first relation in \eqref{eq:correspond} also holds for the case $j+1$. We thus complete the proof.
\end{proof}

\subsection{Alternative primal-dual convergence analysis of PDCP} \label{subsec:alt-pf}

Theorem \ref{thm:duality} demonstrates that PDCP and CG represent primal and dual perspectives for solving the equivalent problems \eqref{eq:phi-lam} and \eqref{eq:dual}, respectively.
Recall that Proposition~\ref{prop:inner} establishes the primal-dual convergence rate of PDCP for solving \eqref{eq:phi-lam}, and hence it is worth studying the primal-dual convergence of CG for solving \eqref{eq:dual} as well. Thanks to the duality connection illustrated by Theorem \ref{thm:duality}, the convergence analysis of CG also serves as an alternative approach to study PDCP from the dual perspective. 

Recall from (13.4) of \cite{Beck2017} that the Wolfe gap $S:\R^n \to \R$ for problem \eqref{eq:dual} is defined by
\begin{equation}\label{eq:Wolfe}
    S(w)=\max _{z \in \R^n}\left\{-\inner{\nabla (h^\lam)^*(-w)}{w-z} + f^*(w) - f^*(z)\right\}.
\end{equation}
In the following lemma, we show that $S(z_j)$ is a primal-dual gap for \eqref{eq:dual}. This result is an analogue of Lemma \ref{lem:tj}, which also shows that $t_j$ is a primal-dual gap for \eqref{eq:phi-lam}.

\begin{lemma}\label{lem:Wolfe-dual}
Suppose that the assumptions in Theorem \ref{thm:duality} hold,  
then for every $j \ge1$, we have
\begin{equation}\label{eq:S-dual}
    S(z_j) = \phi^\lam(x_j) + \psi (z_j).
\end{equation}
\end{lemma}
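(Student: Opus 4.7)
The plan is to reduce both sides of \eqref{eq:S-dual} to the common expression $f(x_j) + f^*(z_j) - \inner{z_j}{x_j}$ by using the duality correspondence \eqref{eq:correspond} and the Fenchel--Young equality for the $h^\lam$ term.

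First I would substitute $\nabla (h^\lam)^*(-z_j) = x_j$ (from Theorem~\ref{thm:duality}) directly into the definition \eqref{eq:Wolfe} of $S(z_j)$. This turns the maximization into
\[
S(z_j) = -\inner{x_j}{z_j} + f^*(z_j) + \max_{z \in \R^n}\bigl\{\inner{x_j}{z} - f^*(z)\bigr\} = -\inner{x_j}{z_j} + f^*(z_j) + f^{**}(x_j),
\]
and since $f \in \bConv{n}$ we have $f^{**} = f$. Hence the right-hand side to match becomes $f(x_j) + f^*(z_j) - \inner{z_j}{x_j}$.

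Next I would rewrite $\phi^\lam(x_j) + \psi(z_j)$. By definitions \eqref{eq:phi-lam} and \eqref{eq:dual}, this equals $f(x_j) + h^\lam(x_j) + (h^\lam)^*(-z_j) + f^*(z_j)$. The key identity is
\[
h^\lam(x_j) + (h^\lam)^*(-z_j) = -\inner{z_j}{x_j},
\]
which is the Fenchel--Young equality applied to the inclusion $-z_j = -s_j \in \partial h^\lam(x_j)$; this inclusion is built into the definition of $s_j$ in \eqref{eq:xj}, and $s_j = z_j$ by the first part of \eqref{eq:correspond}. Substituting gives $\phi^\lam(x_j) + \psi(z_j) = f(x_j) + f^*(z_j) - \inner{z_j}{x_j}$, which matches the reduced form of $S(z_j)$ obtained above.

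There is no real obstacle here; the only subtlety is making sure one invokes the Fenchel--Young equality (not merely the inequality), which is justified precisely because PDCP's update \eqref{eq:xj} enforces $-s_j \in \partial h^\lam(x_j)$. This is exactly the same identity used in the proof of Lemma~\ref{lem:tj}, so the argument parallels that earlier proof, confirming that $S(z_j)$ plays on the dual side the role that $t_j$ plays on the primal side.
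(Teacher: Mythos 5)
Your proof is correct and follows essentially the same route as the paper's: both reduce $S(z_j)$ via the substitution $x_j=\nabla(h^\lam)^*(-z_j)$ and $f^{**}=f$, and both close the gap with the Fenchel--Young equality on the $h^\lam$ term (the paper justifies it from the second relation in \eqref{eq:correspond}, you from $-s_j\in\partial h^\lam(x_j)$ in \eqref{eq:xj} plus $s_j=z_j$, which is equivalent). The only difference is presentational: you meet in the middle at $f(x_j)+f^*(z_j)-\inner{z_j}{x_j}$, whereas the paper transforms $S(z_j)$ all the way into $\phi^\lam(x_j)+\psi(z_j)$.
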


\begin{proof}
Since the assumptions in Theorem \ref{thm:duality} hold, it follows from Theorem \ref{thm:duality} that \eqref{eq:correspond} holds for every $j\ge 1$.
Using the second relation in \eqref{eq:correspond} and the definition of $S(w)$ in \eqref{eq:Wolfe}, we have
\begin{align*}
    S(z_j)&\stackrel{\eqref{eq:Wolfe}}=\max _{z \in \R^n}\left\{-\inner{\nabla (h^\lam)^*(-z_j)}{z_j-z} + f^*(z_j) - f^*(z)\right\} \\
    &\stackrel{\eqref{eq:correspond}}= f^*(z_j) -\inner{x_j}{z_j} + \max _{z \in \R^n}\left\{\langle x_j, z\rangle - f^*(z)\right\} \\
    &= f^*(z_j) + \inner{x_j}{-z_j} + f(x_j) \\
    &\stackrel{\eqref{eq:correspond}}= f^*(z_j) + (h^\lam)^*(-z_j) + h^\lam(x_j) + f(x_j),
\end{align*}
where we use the second relation in \eqref{eq:correspond} again in the last identity.
Finally, \eqref{eq:S-dual} immediately follows from the definitions of $\phi^\lam$ and $\psi$ in \eqref{eq:phi-lam} and \eqref{eq:dual}, respectively.
\end{proof}

Recalling from Lemma \ref{lem:tj} and using the first relation in \eqref{eq:correspond} and the definition of $\psi$ in \eqref{eq:dual}, we know
\begin{equation}\label{ineq:tj-bound}
    t_j \ge \phi^\lam\left(\tilde x_j\right)  + \psi (z_j),
\end{equation}
i.e., $t_j$ an upper bound on a primal-dual gap for \eqref{eq:dual}.
On the other hand, Lemma \ref{lem:Wolfe-dual} shows that $S(z_j)$ is a primal-dual gap for \eqref{eq:dual}.
We also note that the primal iterate used in $S(z_j)$ is $x_j$, while the one used in $t_j$ is $\tx_j$.

The following lemma gives a basic inequality used in the analysis of CG, which is adapted from Lemma 13.7 of \cite{Beck2017}. For completeness, we present Lemma 13.7 of \cite{Beck2017} as Lemma \ref{lem:Beck-FW} in Appendix \ref{sec:technical}.

\begin{lemma}\label{lem:FW-basic}
For every $j \ge 1$ and $\tau_j \in [0,1]$, the iterates $z_j$ and $\bar z_j$ generated by Algorithm~\ref{alg:CG} satisfy
\begin{equation}\label{ineq:CG-fundamental}
    \psi(z_{j+1}) \le \psi(z_j) - (1-\tau_j) S(z_j) + \frac{(1-\tau_j)^2 \lam}{2}\| \bar z_j - z_j\|^2.
\end{equation}
\end{lemma}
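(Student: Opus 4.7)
The plan is to decompose $\psi$ as $\psi(z) = g(z) + f^*(z)$ where $g(z) := (h^\lam)^*(-z)$, and to treat $g$ via its smoothness and $f^*$ via its convexity, in the spirit of a standard Frank--Wolfe descent argument (as in Lemma 13.7 of \cite{Beck2017}, which is cited and restated as Lemma~\ref{lem:Beck-FW}). Since $h^\lam$ is $\lam^{-1}$-strongly convex, $(h^\lam)^*$ is $\lam$-smooth, and composing with negation leaves the smoothness constant intact; hence $g$ is $\lam$-smooth with $\nabla g(z) = -\nabla (h^\lam)^*(-z)$.

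First I would apply the descent lemma for $\lam$-smooth functions to $g$ at the pair $(z_j, z_{j+1})$, exploiting the update rule \eqref{eq:z+} which gives $z_{j+1}-z_j = (1-\tau_j)(\bar z_j - z_j)$. This yields
\[
g(z_{j+1}) \le g(z_j) + (1-\tau_j)\inner{\nabla g(z_j)}{\bar z_j - z_j} + \frac{(1-\tau_j)^2 \lam}{2}\|\bar z_j - z_j\|^2.
\]
For the nonsmooth part $f^*$, convexity together with the same convex combination \eqref{eq:z+} gives
\[
f^*(z_{j+1}) \le f^*(z_j) + (1-\tau_j)\bigl(f^*(\bar z_j) - f^*(z_j)\bigr).
\]
Summing the two displays produces the desired right-hand side, except that the inner-product term reads $(1-\tau_j)\bigl[\inner{\nabla g(z_j)}{\bar z_j - z_j} + f^*(\bar z_j) - f^*(z_j)\bigr]$ in place of $-(1-\tau_j)S(z_j)$.

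The remaining step, and the only real point to verify, is that this bracketed quantity equals $-S(z_j)$. By the definition of $\bar z_j$ in \eqref{eq:bar-zj}, $\bar z_j$ minimizes $z \mapsto \inner{\nabla g(z_j)}{z} + f^*(z)$, so
\[
\inner{\nabla g(z_j)}{\bar z_j - z_j} + f^*(\bar z_j) - f^*(z_j) = \min_{z\in\R^n}\bigl\{\inner{\nabla g(z_j)}{z - z_j} + f^*(z) - f^*(z_j)\bigr\}.
\]
Rewriting this minimum as $-\max_{z}\{-\inner{\nabla g(z_j)}{z-z_j} + f^*(z_j) - f^*(z)\}$ and substituting $\nabla g(z_j) = -\nabla (h^\lam)^*(-z_j)$, I recover exactly $-S(z_j)$ by the definition \eqref{eq:Wolfe}. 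Combining this identity with the summed descent inequality yields \eqref{ineq:CG-fundamental}.

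The main obstacle is bookkeeping the signs: the Wolfe gap is defined with $-\nabla (h^\lam)^*(-w)$ appearing with a negative sign in front of $\inner{\cdot}{w-z}$, and one has to carefully match this with $\nabla g = -\nabla (h^\lam)^*(-\cdot)$ and with the linear minimization \eqref{eq:bar-zj}. Once the sign correspondence is established, everything collapses to the standard Frank--Wolfe descent inequality; no further structure of $f^*$ or $(h^\lam)^*$ beyond convexity and $\lam$-smoothness is needed.
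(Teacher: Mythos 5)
Your proposal is correct and follows essentially the same route as the paper: the paper proves \eqref{ineq:CG-fundamental} by directly instantiating Lemma~\ref{lem:Beck-FW} (Lemma 13.7 of \cite{Beck2017}) with $f=(h^\lam)^*(-\cdot)$, $g=f^*$, $L_f=\lam$, $x=z_j$, $t=1-\tau_j$, $p(x)=\bar z_j$, whereas you reprove that lemma inline via the descent inequality on the $\lam$-smooth part, convexity of $f^*$, and the identification of the bracketed term with $-S(z_j)$ through \eqref{eq:bar-zj}. The sign bookkeeping you flag is handled correctly, so the two arguments coincide in substance.
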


\begin{proof}
It is easy to see that \eqref{eq:dual} as an instance of \eqref{eq:Beck} with
\[
F = \psi, \quad f = (h^\lam)^*, \quad g = f^*, \quad L_f = \lam.
\]
Therefore, \eqref{ineq:CG-fundamental} immediately follows from \eqref{ineq:Beck-basic} with
\[
x = z_j, \quad t= 1-\tau_j, \quad p(x) = \bar z_j, \quad x+t(p(x) -x) = z_{j+1}.
\]
\end{proof}

Define
\begin{equation}\label{def:uj}
    u_j = \begin{cases}    x_1, & \text{if} ~ j=1;\\
\tau_{j-1} u_{j-1} + (1-\tau_{j-1})x_{j-1}, & \text{otherwise.}
\end{cases}
\end{equation}
We are now ready to prove the primal-dual convergence of CG in terms of gap $\phi^\lam(u_j) + \psi(z_j)$ in the following theorem, which resembles Proposition \ref{prop:inner} for PDCP.
An implicit assumption is that we are solving \eqref{eq:dual} as the dual to the proximal subproblem \eqref{eq:phi-lam} within PDPB. Consequently, the iteration count $k$ in PDPB satisfies $k \le 2d_0^2/(\lam \bar \varepsilon)$, in accordance with the assumption in Lemma \ref{lem:t1}.

\begin{theorem}\label{thm:CG-converge}
Suppose that the assumptions in Theorem \ref{thm:duality} hold, and $\tau_j =j/(j+2)$, then for every $j \ge 1$,
\begin{equation}\label{ineq:CG-convergence}
    \phi^\lam(u_j) + \psi(z_j) \le \frac{8M(3d_0 + \lam M)}{j(j+1)} + \frac{8\lam M^2}{j+1}.
\end{equation}
\end{theorem}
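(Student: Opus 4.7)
\textbf{Proof plan for Theorem \ref{thm:CG-converge}.}

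The plan is to exploit the two ingredients already in hand: the primal--dual identity $S(z_j) = \phi^\lam(x_j) + \psi(z_j)$ from Lemma \ref{lem:Wolfe-dual} and the fundamental CG inequality in Lemma \ref{lem:FW-basic}. I will take the Nesterov-style weights $A_j = j(j+1)/2$, so that $\tau_j = A_j/A_{j+1} = j/(j+2)$, $(1-\tau_j)A_{j+1} = j+1$, and $(1-\tau_j)^2 A_{j+1} = 2(j+1)/(j+2) \le 2$. Substituting Lemma \ref{lem:Wolfe-dual} into Lemma \ref{lem:FW-basic}, multiplying by $A_{j+1}$, and using the bound $\|\bar z_j - z_j\| \le 2M$ (valid because, under the update \eqref{eq:one-cut}, the model $\Gamma_j$ is affine and $z_j = s_j = \nabla \Gamma_j$ is a convex combination of the subgradients $f'(x_i)$, each of norm $\le M$ by (A2), while $\bar z_j = f'(x_j)$ by Theorem \ref{thm:duality}) should yield
\[
A_{j+1}\psi(z_{j+1}) + (j+1)\phi^\lam(x_j) \le A_j \psi(z_j) + 4\lam M^2,
\]
which telescopes to
\[
A_j \psi(z_j) + \sum_{i=1}^{j-1}(i+1)\phi^\lam(x_i) \le A_1\psi(z_1) + 4\lam M^2 (j-1).
\]

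Next I will process $\phi^\lam(u_j)$ using only convexity and the recursion $u_{i+1} = \tau_i u_i + (1-\tau_i)x_i$ in \eqref{def:uj}: multiplying the convex-combination inequality $\phi^\lam(u_{i+1}) \le \tau_i \phi^\lam(u_i) + (1-\tau_i)\phi^\lam(x_i)$ by $A_{i+1}$ and telescoping (with $u_1 = x_1$ and $A_1 = 1$) gives
\[
A_j \phi^\lam(u_j) \le \phi^\lam(x_1) + \sum_{i=1}^{j-1}(i+1)\phi^\lam(x_i).
\]
Adding the two displays cancels the $\sum(i+1)\phi^\lam(x_i)$ terms and, using Lemma \ref{lem:Wolfe-dual} at $j=1$ to identify $\psi(z_1)+\phi^\lam(x_1) = S(z_1)$, produces
\[
A_j\bigl[\phi^\lam(u_j) + \psi(z_j)\bigr] \le S(z_1) + 4\lam M^2(j-1).
\]

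The step I expect to be the main obstacle, or at least the most easily overlooked, is bounding $S(z_1)$ by the quantity $\bar t = 4M(3d_0 + \lam M)$ appearing on the right-hand side of \eqref{ineq:CG-convergence}. My plan is to first identify $S(z_1)$ with the PDCP gap $t_1$: since $\Gamma_1 = \ell_f(\cdot;x_0)$ is affine and $s_1 = z_1 = f'(x_0) \in \partial f(x_0)$, the Fenchel--Young identity yields $\Gamma_1^*(s_1) = f^*(s_1)$, so the formula $-m_1 = \Gamma_1^*(s_1) + (h^\lam)^*(-s_1)$ derived inside the proof of Lemma \ref{lem:tj} collapses to $-m_1 = \psi(z_1)$; together with $\tilde x_1 = x_1$ this gives $t_1 = \phi^\lam(x_1) - m_1 = S(z_1)$. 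Invoking Lemma \ref{lem:t1} (applicable since, as remarked before the theorem, we are within a PDPB cycle $k \le 2d_0^2/(\lam \bar \varepsilon)$, so $t_1 = t_{i_k} \le \bar t$) then gives $S(z_1) \le 4M(3d_0 + \lam M)$. Dividing the preceding display by $A_j = j(j+1)/2$ and using $(j-1)/j \le 1$ should deliver \eqref{ineq:CG-convergence} exactly.
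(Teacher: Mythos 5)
Your proposal is correct and follows essentially the same route as the paper's proof: it uses Lemma \ref{lem:Wolfe-dual} to substitute $S(z_j)=\phi^\lam(x_j)+\psi(z_j)$ into Lemma \ref{lem:FW-basic}, the weights $A_j=j(j+1)/2$, the bound $\|\bar z_j-z_j\|\le 2M$ from (A2), and the initialization $\phi^\lam(x_1)+\psi(z_1)\le t_1\le 4M(3d_0+\lam M)$ via Lemma \ref{lem:t1}. The only cosmetic difference is that you telescope the dual recursion and the convexity recursion for $u_j$ separately and cancel the $\sum(i+1)\phi^\lam(x_i)$ terms, whereas the paper folds the convexity step into a single one-step recursion for $\phi^\lam(u_j)+\psi(z_j)$ before telescoping.
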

\begin{proof}
Using Lemma \ref{lem:Wolfe-dual}, the convexity of $\phi^\lam$, and definition of $u_j$ in \eqref{def:uj}, we have for every $j\ge 1$,
\begin{align*}
- (1-\tau_j)S(z_j) & \stackrel{\eqref{eq:S-dual}}= - (1-\tau_j)\phi^\lam(x_j) - (1-\tau_j) \psi (z_j) \\
& \stackrel{\eqref{def:uj}}\le -  \phi^\lam(u_{j+1}) + \tau_j \phi^\lam(u_j) - (1-\tau_j)\psi(z_j).
\end{align*}
This inequality and Lemma \ref{lem:FW-basic} imply that
\[
\phi^\lam (u_{j+1}) + \psi(z_{j+1}) \stackrel{\eqref{ineq:CG-fundamental}}\le \tau_j [\phi^\lam (u_j) + \psi(z_j)] + 2(1-\tau_j)^2\lam M^2,
\]
where we also use the facts that $\|\bar z_j\|\le M$ and $\|z_j\|\le M$ due to (A2) and $\bar z_j, z_j \in \dom f^*$.
Note that for every $j\ge 1$, $\tau_j=A_j/A_{j+1}$ where $A_{j+1}=A_j+j+1$ and $A_0=0$, i.e., $A_j=j(j+1)/2$ for every $j\ge 0$.
It thus follows from the above inequality that
\[
A_{j+1}[\phi^\lam (u_{j+1}) + \psi(z_{j+1})] \le A_j [\phi^\lam (u_j) + \psi(z_j)] + 4\lam M^2.
\]
Replacing the index $j$ in the above inequality by $i$, summing the resulting inequality from $i=1$ to $j$, and using the fact that $A_1=1$, we obtain
\[
A_j[\phi^\lam (u_j) + \psi(z_j)] \le \phi^\lam (u_1) + \psi(z_1) + 4\lam M^2 j.
\]
In view of \eqref{def:uj}, it is easy to see that $u_1 = x_1 = \tx_1$, which together with Lemma \ref{lem:t1} and \eqref{ineq:tj-bound} yields that
\[
\phi^\lam (u_1) + \psi(z_1) = \phi^\lam(\tx_1) + \psi(z_1) \stackrel{\eqref{ineq:tj-bound}}{\le} t_1 \stackrel{\eqref{ineq:t1}}{\le} 4M(3d_0 + \lam M). 
\]
Therefore, \eqref{ineq:CG-convergence} immediately follows from the above two inequalities and the fact that $A_j=j(j+1)/2$.
\end{proof}

The results in this subsection justify the implementation of proximal subproblem \eqref{eq:phi-lam} using CG from the dual point of view. In other words, PDPB can be also understood as the inexact PPM with CG as a subroutine.

\subsection{GBM implementations inspired by CG} \label{subsec:new GBM}

The discussion in this section so far is based on a particular implementation of GBM within PDCP, i.e., the one-cut scheme \eqref{eq:one-cut} with $\tau_j=j/(j+2)$ for every $j \ge 1$. Note that $\tau_j=j/(j+2)$ is also a standard choice in CG but not the only option. Inspired by alternative choices of $\tau_j$ used in CG (e.g., Section 13.2.3 of \cite{Beck2017}), we also consider
\begin{equation}\label{eq:alphaj}
    \alpha_j = \max \left\{0, 1-\frac{S(z_j)}{\lam \|z_j-\bar z_j\|^2}\right\}
\end{equation}
and
\begin{equation}\label{eq:betaj}
    \beta_j \in \Argmin \left\{\psi(\beta z_j + (1-\beta) \bar z_j): \beta \in[0,1]\right\}
\end{equation}
in this subsection and establish convergence rates of CG as in Theorem \ref{thm:CG-converge} but with $\alpha_j$ and $\beta_j$. As a consequence of the duality result (i.e., Theorem \ref{thm:duality}), this means that the one-cut scheme \eqref{eq:one-cut} can use also $\tau_j$ different from $j/(j+2)$. It is worth noting that these new choices of $\tau_j$ and their corresponding convergence proofs are only made possible by the duality connection discovered in this section.

The following theorem is a counterpart of Theorem \ref{thm:CG-converge} in the case of choosing $\tau_j$ of CG as in \eqref{eq:alphaj} or \eqref{eq:betaj}.
An implicit assumption is that we are solving \eqref{eq:dual} as the dual to the proximal subproblem \eqref{eq:phi-lam} within PDPB. Consequently, the iteration count $k$ in PDPB satisfies $k \le 2d_0^2/(\lam \bar \varepsilon)$, in accordance with the assumption in Lemma \ref{lem:t1}.

\begin{theorem}
Consider Algorithm \ref{alg:CG} with $\tau_j$ as in \eqref{eq:alphaj} or \eqref{eq:betaj}, then for every $j \ge 1$, \eqref{ineq:CG-convergence} holds
where $u_j$ is as in \eqref{def:uj} with $\tau_j = j/(j+2)$ and $z_j$ is as in \eqref{eq:z+} with $\tau_j$ as in \eqref{eq:alphaj} or \eqref{eq:betaj} correspondingly.
\end{theorem}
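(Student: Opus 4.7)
The plan is to observe that both the short-step rule \eqref{eq:alphaj} and the exact line-search rule \eqref{eq:betaj} share a ``minimization'' property that makes $z_{j+1}$ at least as good (in $\psi$-value) as any other convex combination of $z_j$ and $\bar z_j$; this lets us run the telescoping argument of Theorem~\ref{thm:CG-converge} with the \emph{reference} weight $\tilde\tau_j = j/(j+2)$ which the auxiliary sequence $u_j$ is built from, even though CG is using a different step-size.

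The key reduction to establish first is the following: for every $\tilde\tau_j \in [0,1]$,
\begin{equation}\label{eq:key-reduction}
\psi(z_{j+1}) \le \psi(z_j) - (1-\tilde\tau_j) S(z_j) + \frac{(1-\tilde\tau_j)^2 \lam}{2}\|\bar z_j - z_j\|^2.
\end{equation}
For the line-search rule \eqref{eq:betaj} this is immediate: by definition $\psi(z_{j+1}) \le \psi(\tilde\tau_j z_j + (1-\tilde\tau_j)\bar z_j)$, and then Lemma~\ref{lem:FW-basic} applied with step $\tilde\tau_j$ yields the bound. For the short-step rule \eqref{eq:alphaj} I would note that $\alpha_j$ is precisely the minimizer over $\tau\in[0,1]$ of the quadratic $q(\tau) := -(1-\tau)S(z_j) + \tfrac{(1-\tau)^2 \lam}{2}\|\bar z_j - z_j\|^2$ (setting $q'(\tau)=0$ gives $1-\tau = S(z_j)/(\lam\|\bar z_j - z_j\|^2)$, and clipping to $[0,1]$ reproduces \eqref{eq:alphaj}); invoking Lemma~\ref{lem:FW-basic} with $\tau_j=\alpha_j$ gives $\psi(z_{j+1}) \le q(\alpha_j) + \psi(z_j) \le q(\tilde\tau_j) + \psi(z_j)$, which is \eqref{eq:key-reduction}. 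Crucially, Lemma~\ref{lem:Wolfe-dual}, and hence $S(z_j) = \phi^\lam(x_j) + \psi(z_j)$, still hold, because Theorem~\ref{thm:duality} requires only that PDCP and CG follow the same step-size recursion \eqref{eq:z+}, not a specific one.

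Once \eqref{eq:key-reduction} is in hand, I would choose $\tilde\tau_j = j/(j+2)$, use $\|\bar z_j - z_j\| \le 2M$ from (A2), substitute $S(z_j) = \phi^\lam(x_j) + \psi(z_j)$, and apply convexity of $\phi^\lam$ along $u_{j+1} = \tfrac{j}{j+2} u_j + \tfrac{2}{j+2} x_j$ (i.e., \eqref{def:uj} with weights $j/(j+2)$) to recover
\[
\phi^\lam(u_{j+1}) + \psi(z_{j+1}) \le \tfrac{j}{j+2}\bigl[\phi^\lam(u_j) + \psi(z_j)\bigr] + \tfrac{8\lam M^2}{(j+2)^2}.
\]
Multiplying by $A_{j+1}=(j+1)(j+2)/2$, telescoping from $1$ to $j-1$, and using the initial bound $\phi^\lam(u_1) + \psi(z_1) = \phi^\lam(\tx_1) + \psi(z_1) \le t_1 \le 4M(3d_0+\lam M)$ from \eqref{ineq:tj-bound} and Lemma~\ref{lem:t1} then give \eqref{ineq:CG-convergence} verbatim. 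This last initial bound is independent of the $\tau_j$ rule because $x_1$ (and therefore $\tx_1$ and $z_1$) is determined entirely by $\Gamma_1 = \ell_f(\cdot;\hat x_{k-1})$.

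The main obstacle is really just conceptual rather than technical: one must keep straight that the CG step-size ($\alpha_j$ or $\beta_j$) differs from the weight $j/(j+2)$ used to define $u_j$, and recognize that the ``minimization'' property of $\alpha_j$ and $\beta_j$ is exactly what licenses replacing the step-size inside the descent inequality with an arbitrary $\tilde\tau_j$ of the analyst's choosing.
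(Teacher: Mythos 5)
Your proposal is correct and follows essentially the same route as the paper: both establish that, because $\alpha_j$ minimizes the quadratic upper bound and $\beta_j$ minimizes $\psi$ along the segment, the descent inequality of Lemma~\ref{lem:FW-basic} holds with an \emph{arbitrary} reference weight $\tilde\tau_j\in[0,1]$ in place of the actual step-size, and then rerun the Theorem~\ref{thm:CG-converge} telescoping with $\tilde\tau_j=j/(j+2)$. Your explicit remarks that Lemma~\ref{lem:Wolfe-dual} and the initial bound $\phi^\lam(u_1)+\psi(z_1)\le t_1$ are insensitive to the choice of step-size rule are correct and make explicit details the paper leaves implicit when it invokes Theorem~\ref{thm:CG-converge}.
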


\begin{proof}
First, it follows from Lemma \ref{lem:FW-basic} and the definition of $z_{j+1}$ in \eqref{eq:z+} that for any $\tau_j \in [0,1]$,
\begin{equation}\label{ineq:tauj}
  \psi(\tau_j z_j + (1-\tau_j) \bar z_j) \stackrel{\eqref{ineq:CG-fundamental}}\le \psi(z_j) - (1-\tau_j) S(z_j) + \frac{(1-\tau_j)^2 \lam}{2}\| \bar z_j - z_j\|^2.
\end{equation}
Claim: In either case of Algorithm \ref{alg:CG} with $\tau_j$ as in \eqref{eq:alphaj} or \eqref{eq:betaj}, we have for any $\tau_j \in [0,1]$,
\begin{equation}\label{ineq:claim}
    \psi(z_{j+1}) \le \psi(z_j) - (1-\tau_j) S(z_j) + \frac{(1-\tau_j)^2 \lam}{2}\| \bar z_j - z_j\|^2.
\end{equation}
In the case of $\alpha_j$ in \eqref{eq:alphaj}, it is easy to see from \eqref{eq:z+} that $z_{j+1} = \alpha_j z_j + (1-\alpha_j) \bar z_j$,
which together with \eqref{ineq:tauj} with $\tau_j=\alpha_j$ implies that
\begin{equation}\label{ineq:alpha}
    \psi(z_{j+1}) \le \psi(z_j) - (1-\alpha_j) S(z_j) + \frac{(1-\alpha_j)^2 \lam}{2}\| \bar z_j - z_j\|^2.
\end{equation}
Noting from \eqref{eq:alphaj} that
\[
1-\alpha_j =\min \left\{1, \frac{S(z_j)}{\lam \|z_j-\bar z_j\|^2}\right\},
\]
which minimizes the right-hand side of \eqref{ineq:claim} as a quadratic function of $1-\tau_j$ over the interval [0,1]. Hence, \eqref{ineq:alpha} immediately implies that \eqref{ineq:claim} holds for any $\tau_j \in [0,1]$.
In the case of $\beta_j$ in \eqref{eq:betaj}, it is clear that for any $\tau_j \in [0,1]$,
\[
\psi(z_{j+1}) \stackrel{\eqref{eq:z+}}= \psi(\beta_j z_j + (1-\beta_j) \bar z_j) \stackrel{\eqref{eq:betaj}}\le \psi(\tau_j z_j + (1-\tau_j) \bar z_j).
\]
Hence, \eqref{ineq:claim} immediately follows from this observation and \eqref{ineq:tauj}. We have thus proved the claim.
Except for $z_{j+1}$ in \eqref{ineq:claim} is computed as in \eqref{eq:z+} with $\tau_j$ replaced by $\alpha_j$ or $\beta_j$, the claim is the same as Lemma \ref{lem:FW-basic}.
Finally, the conclusion of the theorem holds as a consequence of Theorem \ref{thm:CG-converge}.
\end{proof}

\subsection{New variants of CG inspired by GBM implementations} \label{subsec: new CG}

Motivated by possible $\tau_j$'s used in CG, we develop in Subsection \ref{subsec:new GBM} new implementations of GBM, i.e., the one-cut scheme \eqref{eq:one-cut} with $\alpha_j$ and $\beta_j$ in \eqref{eq:alphaj} and \eqref{eq:betaj}, respectively.
In this subsection, we further exploit the duality between PDCP and CG from the other direction by developing novel CG variants with inspiration from other GBM implementations used in PDCP.

Apart from the one-cut scheme \eqref{eq:one-cut}, Subsection~3.1 of \cite{liang2024unified} also provides two other candidates for GBM, i.e., two-cuts and multiple-cuts schemes, which are standard cut-aggregation and cutting-plane models, respectively.

To begin with, we first briefly review the two-cuts scheme.
It starts from $\Gamma_1(\cdot) = \bar \Gamma_0(\cdot) = \ell_f(\cdot;x_0)$. For $j\ge 1$, given 
\begin{equation}\label{eq:tc}
    \Gamma_j(\cdot) = \max \{\bar \Gamma_{j-1}(\cdot), \ell_f(\cdot; x_{j-1})\}
\end{equation}
where $\bar \Gamma_{j-1}$ is an affine function, the two-cuts scheme recursively updates $\Gamma_{j+1}$ as in \eqref{eq:tc}, i.e., $\Gamma_{j+1}(\cdot) = \max \left\{\bar \Gamma_j(\cdot), \ell_f(\cdot; x_j)\right\}$,
which always maintains two cuts.
The auxiliary bundle model $\bar \Gamma_j$ is updated as
\begin{equation}\label{eq:tc-bar}
    \bar \Gamma_j(\cdot) = \theta_{j-1} \bar \Gamma_{j-1}(\cdot) + (1-\theta_{j-1})\ell_f(\cdot;x_{j-1}), 
\end{equation}
where $\theta_{j-1}$ is the Lagrange multiplier associated with the first constraint in the problem below
\begin{equation}\label{eq:TC-min}
    \underset{(u,r) \in \R^n \times \R}\min\left\{r+h^\lam(u): \bar \Gamma_{j-1}(u) \le r, \, \ell_f(u,x_{j-1}) \le r \right\}.
\end{equation}
Proposition D.1 in \cite{liang2024unified} shows that the above two-cuts scheme satisfies GBM.

Recall the previous options of $\tau_j$ in CG (see \eqref{eq:z+}), i.e., $j/(j+2)$, \eqref{eq:alphaj}, and \eqref{eq:betaj}, are all determined once we know $z_j$ and $\bar z_j$. One natural way to generalize CG is to leave $\tau_j$ and, consequently, $z_{j+1}$ undetermined, deferring their computation to the subsequent iteration. Therefore, \eqref{eq:bar-zj} and \eqref{eq:z+} are insufficient to determine \(\tau_j\) and \(z_{j+1}\), and more conditions are needed.
For instance, motivated by the two-cuts scheme above, we additionally require
\begin{equation}\label{eq:require}
    x_j = \nabla (h ^\lam)^* (-z_j), \quad \theta_{j-1} \bar \Gamma_{j-1}(x_j) + (1-\theta_{j-1})\ell_f(x_j;x_{j-1}) =  \Gamma_j(x_j),
\end{equation}
where $z_j = \theta_{j-1} z_{j-1} + (1-\theta_{j-1}) \bar z_{j-1}$ following from \eqref{eq:z+}.
Note that \eqref{eq:TC-min} is equivalent to \eqref{eq:xj} with $\Gamma_j$ as in \eqref{eq:tc}, and hence the optimal solution to \eqref{eq:TC-min} is $(x_j,\Gamma_j(x_j))$. As a result, with the understanding that $z_j = \nabla \bar \Gamma_j$ and $\bar z_j = f'(x_j)$, the first identity in \eqref{eq:require} corresponds to the optimality of \eqref{eq:TC-min}, and the second one in \eqref{eq:require} corresponds to the complementary slackness of \eqref{eq:TC-min}. Moreover, it follows from \eqref{eq:mc} that $\partial \Gamma_j$ is the convex hull of $\nabla \bar \Gamma_{j-1}$ and $f'(x_{j-1})$, and hence that
\[
z_j = \nabla \bar \Gamma_j = \theta_{j-1} \nabla \bar \Gamma_{j-1} + (1-\theta_{j-1})f'(x_{j-1}) \in \partial \Gamma_j(x_j).
\]
The discussion above verifies that Theorem \ref{thm:duality} also holds in the context of the two-cuts scheme. In other words, in the spirit of Theorem \ref{thm:duality}, this new CG variant is the dual method of PDCP with the two-cuts implementation of GBM.

We now turn to review the multi-cuts scheme and discuss its implication in generalizing CG. For $j\ge 1$, given an index set $I_j \subseteq \{0, \cdots, j-1\}$, the multi-cuts scheme sets
\begin{equation}\label{eq:mc}
    \Gamma_j(\cdot) = \max \left\{\ell_f(\cdot ; x_i): i \in I_j\right\}.
\end{equation}
The index set $I_j$ starts from $I_1 = \{0\}$ and recursively updates as
\[
I_{j+1} = \bar I_{j+1} \cup \{j\}, \quad \bar I_{j+1} = \{i \in I_j: \theta_j^i>0\},
\]
where $\theta_j^i$ is the Lagrange multiplier associated with the constraint $\ell_f(u;x_i) \le r$ in the problem below
\begin{equation}\label{eq:MC-min}
    \underset{(u,r) \in \R^n \times \R}\min\left\{r + h^\lam(u): \ell_f(u;x_i) \le r, \, \forall i\in I_j\right\}.
\end{equation}
Here, $\bar \Gamma_j(\cdot) = \max \left\{\ell_f(\cdot ; x_i): i \in \bar I_j\right\}$. Proposition D.2 in \cite{liang2024unified} shows that the above multi-cuts scheme satisfies GBM.

The recursion \eqref{eq:z+} indicates that $z_j$ in CG is a convex combination of $\{z_1, \bar z_1, \ldots, \bar z_{j-1}\}$. Hence, a more general candidate of $z_j$ is any point in the convex hull of $\{z_1, \bar z_1, \ldots, \bar z_{j-1}\}$.
Similar to the discussion of the new CG motivated by the two-cuts scheme, we also need to introduce conditions to determine $z_j$ in this generalization. For instance, inspired by the multi-cuts scheme above, we specifically compute
\begin{equation}\label{eq:zjtheta}
    z_j = \sum_{i\in I_j} \theta_j^i \bar z_i
\end{equation}
with the convention that $\bar z_0 = z_1$, where $\theta_j^i$ is the corresponding Lagrange multiplier for \eqref{eq:MC-min}.
Now, the positive multiplier $\theta_j^i$ (primal perspective) also serves as the convex combination parameter (dual perspective).
Note that \eqref{eq:MC-min} is equivalent to \eqref{eq:xj} with $\Gamma_j$ as in \eqref{eq:mc}, and hence the optimal solution to \eqref{eq:MC-min} is $(x_j,\Gamma_j(x_j))$. Again, it is easy to verify that
\[
z_j \in \partial \Gamma_j(x_j), \quad x_j = \nabla ( h ^\lam)^* (-z_j), \quad f'(x_j) = \bar z_j,
\]
and hence Theorem \ref{thm:duality} holds in the context of the multi-cuts scheme.
In other words, following the spirit of Theorem \ref{thm:duality}, this generalization of CG serves as the dual method of PDCP, implemented with the multi-cuts scheme.

Since the number of nonzero $\theta_j^i$ could be small (compared to $j$), $z_j$ has a sparse representation in terms of $\{\bar z_0, \bar z_1, \ldots, \bar z_{j-1}\}$. Assuming $\{\bar z_j\}$ is a sequence of sparse vectors, then $z_j$ is sparse, and indeed sparser than those generated by CG using \eqref{eq:z+} with $\tau_j$ being $j/(j+2)$, $\alpha_j$, $\beta_j$, and $\theta_j$. 

Leveraging the primal-dual connections between PDCP with two-cuts and multi-cuts schemes and the novel CG variants, we present the following convergence result for the latter. The proof is omitted, as it directly follows from Proposition \ref{prop:inner} and Lemma~\ref{lem:t1}, which establish the convergence of PDCP under the two-cuts and multi-cuts schemes.
An implicit assumption is that we are solving \eqref{eq:dual} as the dual to the proximal subproblem \eqref{eq:phi-lam} within PDPB. Consequently, the iteration count $k$ in PDPB satisfies $k \le 2d_0^2/(\lam \bar \varepsilon)$, in accordance with the assumption in Lemma \ref{lem:t1}.

\begin{theorem}
 Consider the two CG variants described in this subsection, then $z_j$ generated in each variant satisfies
 \[
 \phi^\lam(\tx_j) + \psi(z_j) \le \frac{8M (3d_0 + \lam M)}{j(j+1)} + \frac{16\lam M^2}{j+1},
 \]
 where $\tilde x_j$ is as in \eqref{ineq:txj} with $\tau_j = j/(j+2)$.
\end{theorem}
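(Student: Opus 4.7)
The plan is to route the argument through the primal-dual gap $t_j$ of PDCP by invoking the duality that has already been informally established for the two-cuts and multi-cuts schemes earlier in this subsection. First, I would verify, for each of the two new CG variants, that the correspondences $z_j \in \partial \Gamma_j(x_j)$, $x_j = \nabla (h^\lam)^*(-z_j)$, and $f'(x_j) = \bar z_j$ hold, exactly as in \eqref{eq:correspond} of Theorem \ref{thm:duality}. For the two-cuts variant this follows from the KKT conditions of \eqref{eq:TC-min}: the Lagrange multiplier $\theta_{j-1}$ generates $z_j = \theta_{j-1} z_{j-1} + (1-\theta_{j-1})\bar z_{j-1}$ via \eqref{eq:tc-bar} and complementary slackness, so that $z_j = \nabla \bar \Gamma_j \in \partial \Gamma_j(x_j)$; for the multi-cuts variant the analogous reading of \eqref{eq:MC-min} together with \eqref{eq:zjtheta} again places $z_j$ in $\partial \Gamma_j(x_j)$ and forces $x_j = \nabla (h^\lam)^*(-z_j)$. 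Hence running the CG variant from $z_1 = f'(x_0)$ is equivalent to running PDCP$(x_0)$ with the corresponding GBM scheme, and in particular $s_j = z_j$.

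Second, with $s_j = z_j$ in hand, the primal-dual bound of Lemma \ref{lem:tj} becomes
\[
\phi^\lam(\tx_j) + \psi(z_j) \;=\; \phi^\lam(\tx_j) + f^*(s_j) + (h^\lam)^*(-s_j) \;\le\; t_j,
\]
where on the left I have used the definition \eqref{eq:dual} of $\psi$.

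Third, I note that both the two-cuts scheme \eqref{eq:tc} and the multi-cuts scheme \eqref{eq:mc} satisfy the GBM lower bound \eqref{def:Gammaj} for every $\tau_j \in [0,1]$ (these are the contents of Propositions D.1 and D.2 of \cite{liang2024unified}), so I am free to take $\tau_j = j/(j+2)$ in the auxiliary rule \eqref{ineq:txj} defining $\tx_j$. Applying Proposition \ref{prop:inner} then yields $t_j \le 2t_1/(j(j+1)) + 16\lam M^2/(j+1)$, and combining with the initial-gap bound $t_1 \le 4M(3d_0 + \lam M)$ from Lemma \ref{lem:t1} (valid under the standing hypothesis $k \le 2d_0^2/(\lam \bar\varepsilon)$ within PDPB) gives
\[
\phi^\lam(\tx_j) + \psi(z_j) \;\le\; \frac{8M(3d_0+\lam M)}{j(j+1)} + \frac{16\lam M^2}{j+1},
\]
which is exactly the claimed estimate.

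The main obstacle is the first step, namely making the duality rigorous beyond the affine-$\Gamma_j$ setting of Theorem \ref{thm:duality}. For the one-cut scheme one simply had $s_j = \nabla \Gamma_j$, but for the two-cuts and multi-cuts schemes $\Gamma_j$ is a maximum of affine functions and $s_j$ must be recovered as a convex combination of the active subgradients. Identifying the KKT multipliers of \eqref{eq:TC-min} and \eqref{eq:MC-min} with the convex-combination weights defining $z_j$ in the CG variant, and checking that this common vector simultaneously certifies $x_j = \nabla(h^\lam)^*(-z_j)$ and $z_j \in \partial \Gamma_j(x_j)$, is the only nontrivial piece; once it is in place the rest of the proof is a direct substitution from Proposition \ref{prop:inner} and Lemma \ref{lem:t1}.
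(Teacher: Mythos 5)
Your proposal is correct and follows exactly the route the paper intends: the paper omits the proof precisely because, once the duality identification $z_j = s_j$ is verified for the two-cuts and multi-cuts schemes (which the paper does in the discussion preceding the theorem, and which you reproduce via the KKT/complementary-slackness reading of \eqref{eq:TC-min} and \eqref{eq:MC-min}), the bound is a direct substitution of Lemma \ref{lem:t1} into Proposition \ref{prop:inner} combined with \eqref{ineq:tj-bound}. No discrepancies.
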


\section{Proximal bundle method for SPP}\label{sec:PB-SPP}

In this section, we consider the convex-concave nonsmooth composite SPP \eqref{eq:spp}.
More specifically, we assume the following conditions hold:
\begin{itemize}
    \item[(B1)] a subgradient oracle $f_x': \dom h_1 \times \dom h_2 \to \R^n$ and a supergradient oracle $f_y': \dom h_1 \times \dom h_2 \to \R^m$ are available, that is, we have $f_x'(u,v) \in \partial_x f(u,v)$ and $f_y'(u,v) \in \partial_y f(u,v)$ for every $(u,v)\in \dom h_1 \times \dom h_2$;
    \item[(B2)] both $f_x'$ and $f_y'$ are uniformly bounded by some positive scalar $M$ over $\dom h_1$ and $\dom h_2$, i.e., for every pair $(u,v)\in \dom h_1 \times \dom h_2$,
    \begin{equation}\label{ineq:spp-Lips}
        \|f_x'(u,v)\| \le M, \quad \|f_y'(u,v)\| \le M;
    \end{equation}
    \item[(B3)] $\dom h_1 \times \dom h_2$ is bounded with finite diameter $D>0$; 
    \item[(B4)] the proximal mappings of $h_1$ and $h_2$ are easy to compute;
    \item[(B5)] the set of saddle points of problem \eqref{eq:spp} is nonempty.
\end{itemize}

Given a pair $(x,y) \in \dom h_1 \times \dom h_2$, for every $(u,v)\in \dom h_1 \times \dom h_2$, define
\[
\ell_{f(\cdot,y)}(u;x)  = f(x,y) + \inner{f_x'(x,y)}{u-x}, \quad \ell_{f(x,\cdot)}(v;y)  = f(x,y) + \inner{f_y'(x,y)}{v-y}.
\]
It is easy to see from (B2) that for fixed $(x,y)$ and every $(u,v)\in \dom h_1 \times \dom h_2$,
\begin{equation}\label{ineq:Lips-spp}
    f(u,y)-\ell_{f(\cdot,y)}(u;x) \le 2 M\|u-x\|, \quad \ell_{f(x,\cdot)}(v;y) - f(x,v) \le 2M\|v-y\|.
\end{equation}

We say a pair $(x_*, y_*) \in \dom h_1 \times \dom h_2$ is a saddle-point of \eqref{eq:spp} if for every pair $(u,v)\in \dom h_1 \times \dom h_2$,
\begin{equation}\label{def:spp}
    \phi(x_*,v) \le \phi(x_*,y_*) \le \phi(u,y_*).
\end{equation}
We say a pair $(x, y) \in \dom h_1 \times \dom h_2$ is a $\bar \varepsilon$-saddle-point of \eqref{eq:spp} if
\begin{equation}\label{def:eps-spp}
    0 \in \partial_{\bar \varepsilon} [\phi(\cdot, y)-\phi(x, \cdot)](x,y).
\end{equation}
It is well-known that SPP \eqref{eq:spp} is equivalent to
\begin{equation}\label{eq:spp-Phi}
    \min_{x\in\R^n, y\in \R^m} \{\Phi(x,y) := \varphi(x) - \psi(y)\},
\end{equation}
where 
\begin{equation}\label{def:spp-func}
    \varphi(x)=\max _{y \in \R^m} \phi(x, y), \quad \psi(y)=\min _{x \in \R^n} \phi(x, y).
\end{equation}
As a consequence, an equivalent definition of $\bar \varepsilon$-saddle-point is as follows: a pair $(x, y) \in \dom h_1 \times \dom h_2$ satisfying
\begin{equation}\label{def:eps-spp-equiv}
    \Phi(x,y) = \varphi(x)-\psi(y) \le \bar \varepsilon.
\end{equation}
The equivalence between \eqref{def:eps-spp} and \eqref{def:eps-spp-equiv} is given in Lemma \ref{lem:spp-equiv}.
Another related but weaker notion is a pair $(x, y) \in \dom h_1 \times \dom h_2$ satisfying
\begin{equation}\label{def:spp-weak}
    -\bar \varepsilon\le \phi(x,y) - \phi(x_*,y_*) \le \bar \varepsilon.
\end{equation}
The implication from \eqref{def:eps-spp} to \eqref{def:spp-weak} is given in Lemma \ref{lem:101}.

The composite subgradient method for SPP \eqref{eq:spp} denoted by CS-SPP$(x_0,y_0,\lam)$, where $(x_0,y_0)\in \dom h_1 \times \dom h_2$ is the initial pair and $\lam>0$ is the prox stepsize, recursively computes
\begin{align}
    x_k &= \underset{u\in \R^n}\argmin\left\{\ell_{f(\cdot,y_{k-1})}(u;x_{k-1}) + h_1(u) + \frac{1}{2\lam}\|u-x_{k-1}\|^2\right\}, \label{eq:spp-xk} \\
    y_k &= \underset{v\in \R^m}\argmin\left\{-\ell_{f(x_{k-1},\cdot)}(v;y_{k-1}) + h_2(v) + \frac{1}{2\lam}\|v-y_{k-1}\|^2\right\} \label{eq:spp-yk}.
\end{align}

For given tolerance $\bar \varepsilon>0$, letting $\lam = \bar \varepsilon/(32M^2)$, then the iteration-complexity for CS-SPP$(x_0,y_0,\lam)$ to generate a $\bar \varepsilon$-saddle point of \eqref{eq:spp} is bounded by ${\cal O}(M^2 D^2/\bar \varepsilon^2)$ (see Theorem~\ref{thm:cs-spp}).

\subsection{Inexact proximal point framework for SPP} \label{subsec:IPPF}

The generic PPM for solving \eqref{eq:spp-Phi} iteratively solves the proximal subproblem
\begin{equation}\label{eq:ppm-spp}
    (x_k, y_k) = \underset{x\in\R^n, y\in \R^m}\argmin\left\{ \Phi(x,y) + \frac{1}{2\lam_k} \|x-x_{k-1}\|^2 + \frac{1}{2\lam_k} \|y-y_{k-1}\|^2 \right\},
\end{equation}
which motivates the following proximal point formulation for solving \eqref{eq:spp}
\begin{equation}\label{eq:ppm-spp-1}
    (x_k, y_k) = \underset{x\in\R^n}\argmin \underset{y\in\R^m}\argmax\left\{\phi(x,y) + \frac1{2\lam_k}\|x-x_{k-1}\|^2 - \frac{1}{2\lam_k}\|y-y_{k-1}\|^2\right\}.
\end{equation}
However, both \eqref{eq:ppm-spp} and \eqref{eq:ppm-spp-1} are only conceptual PPMs for SPP. In this subsection, we introduce the generic IPPF for solving SPP \eqref{eq:spp} and show that CS-SPP described previously is a concrete example of IPPF.

\begin{algorithm}[H]
\begin{algorithmic}
\caption{Inexact Proximal Point Framework for SPP \eqref{eq:spp}}
\REQUIRE given initial pair $(x_0, y_0) \in \dom h_1 \times \dom h_2$ and scalar $\sigma \in [0,1]$
\FOR{$k = 1, 2, \cdots$}
\STATE $\bullet$ choose $\lam_k >0$, $\varepsilon_k>0$, and $\delta_k>0$ and find $(x_k, y_k)\in \dom h_1 \times \dom h_2$ and $(\tx_k, \ty_k)\in \dom h_1 \times \dom h_2$ such that
\begin{equation}\label{incl}
   \left( \frac{x_{k-1}-x_k}{\lam_k}, \frac{y_{k-1}-y_k}{\lam_k} \right)
 \in \partial_{\varepsilon_k} [\phi(\cdot,y_{k-1}) - \phi(x_{k-1},\cdot)](\tx_k,\ty_k)
\end{equation}
and
\begin{equation}\label{ineq:IPP}
		\left\|x_k-\tx_k\right\|^2+\left\|y_k-\ty_k\right\|^2+2 \lam_k \varepsilon_k \le \delta_k + \sigma \left(\left\|\tx_k-x_{k-1}\right\|^2+\left\|\ty_k-y_{k-1}\right\|^2\right).
	\end{equation}
\ENDFOR
\end{algorithmic} 
\end{algorithm}

\begin{lemma}\label{lem:pd-equiv}
    For every $k\ge 1$, define $p_k:\R^n\to \R$ and $d_k:\R^m\to \R$ as follows
    \begin{equation}\label{def:p-d-k}
        p_k(\cdot) := f(\cdot, y_{k-1})+ h_1(\cdot), \quad d_k(\cdot) := -f(x_{k-1},\cdot)+ h_2(\cdot).
    \end{equation}
    Then, the inclusion \eqref{incl} is equivalent to for every $(u,v) \in \dom h_1 \times \dom h_2$,
    \begin{align}\label{ineq:target}
        & p_k(u) + d_k(v) -p_k(\tx_k) -d_k(\ty_k) \nn\\
        \ge &\frac{1}{\lam_k}\inner{x_{k-1}-x_k}{u-\tx_k} + \frac{1}{\lam_k}\inner{y_{k-1}-y_k}{v-\ty_k} - \varepsilon_k.
    \end{align}
\end{lemma}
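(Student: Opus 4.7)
The plan is to show that the equivalence is essentially a direct unpacking of definitions. I will first observe that the function appearing in the subdifferential, namely $F(u,v) := \phi(u,y_{k-1}) - \phi(x_{k-1},v)$, is separable in $(u,v)$ up to an additive constant. Indeed, using $\phi(x,y)=f(x,y)+h_1(x)-h_2(y)$, one obtains
\[
F(u,v) = [f(u,y_{k-1})+h_1(u)] + [-f(x_{k-1},v)+h_2(v)] - h_2(y_{k-1}) - h_1(x_{k-1}),
\]
which by the definitions of $p_k$ and $d_k$ in \eqref{def:p-d-k} reduces to $F(u,v) = p_k(u) + d_k(v) + C_k$, where $C_k := -h_2(y_{k-1}) - h_1(x_{k-1})$ is a constant independent of $(u,v)$.

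Next I apply the definition of the joint $\varepsilon$-subdifferential (the analogue of \eqref{def:subdiff} on $\R^n \times \R^m$) to $F$ at the point $(\tx_k,\ty_k)$ with the candidate subgradient $\left((x_{k-1}-x_k)/\lam_k,\,(y_{k-1}-y_k)/\lam_k\right)$. By definition, the inclusion \eqref{incl} holds if and only if, for every $(u,v) \in \R^n \times \R^m$,
\[
F(u,v) \ge F(\tx_k,\ty_k) + \frac{1}{\lam_k}\inner{x_{k-1}-x_k}{u-\tx_k} + \frac{1}{\lam_k}\inner{y_{k-1}-y_k}{v-\ty_k} - \varepsilon_k.
\]
Substituting the separable form of $F$, the constants $C_k$ on both sides cancel, leaving exactly the inequality \eqref{ineq:target}. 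Since $p_k$ and $d_k$ take the value $+\infty$ outside $\dom h_1$ and $\dom h_2$ respectively, the inequality is non-trivial only on $\dom h_1 \times \dom h_2$, which matches the quantification in the statement.

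There is essentially no obstacle here; the only subtle point to flag in the writeup is that the separability $F(u,v) = p_k(u) + d_k(v) + C_k$ lets us pass between the joint $\varepsilon$-subdifferential and the pair of inequalities without splitting the slack $\varepsilon_k$ into two pieces, because the single slack $\varepsilon_k$ can be absorbed into the joint inequality exactly as written. I will conclude the proof by noting this equivalence holds in both directions, giving the claimed "if and only if."
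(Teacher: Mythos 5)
Your proof is correct and follows essentially the same route as the paper: apply the definition of the $\varepsilon$-subdifferential to $F(u,v)=\phi(u,y_{k-1})-\phi(x_{k-1},v)$ and observe that $F(u,v)-F(\tx_k,\ty_k)=p_k(u)+d_k(v)-p_k(\tx_k)-d_k(\ty_k)$ since the constants $h_1(x_{k-1})$ and $h_2(y_{k-1})$ cancel. Your explicit separable decomposition $F(u,v)=p_k(u)+d_k(v)+C_k$ and the remark about the quantification over $\dom h_1\times\dom h_2$ are just slightly more detailed renderings of the same argument.
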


\begin{proof}
It follows from \eqref{incl} and the definition of $\varepsilon$-subdifferential \eqref{def:subdiff} that for every pair $(u,v) \in \dom h_1 \times \dom h_2$,
\begin{align*}
    &\phi(u, y_{k-1})-\phi(x_{k-1}, v) - [\phi(\tx_k, y_{k-1})-\phi(x_{k-1}, \ty_k)] \\
    \ge & \frac{1}{\lam_k}\inner{x_{k-1}-x_k}{u-\tx_k} + \frac{1}{\lam_k}\inner{y_{k-1}-y_k}{v-\ty_k} - \varepsilon_k.
\end{align*}
Observing from the definitions of $p_k$ and $d_k$ in \eqref{def:p-d-k} that
\[
p_k(u) + d_k(v) -p_k(\tx_k) -d_k(\ty_k) = \phi(u, y_{k-1})-\phi(x_{k-1}, v) - [\phi(\tx_k, y_{k-1})-\phi(x_{k-1}, \ty_k)],
\]
which together with the above inequality implies that \eqref{ineq:target} holds.
\end{proof}

We are now ready to present the result showing that CS-SPP is an instance of IPPF with certain parameterizations. The proof is postponed to Subsection \ref{subsec:pf-CS-IPPF}.

\begin{proposition}\label{prop:CS-SPP-IPPF}
    Given $(x_0,y_0)\in \dom h_1 \times \dom h_2$, $\delta>0$, and $\lam=\sqrt{\delta/8M^2}$, then
     CS-SPP$(x_0,y_0,\lam)$ is an instance of IPPF with $\sigma=1$, $(\lam_k,\delta_k)=(\lam,\delta)$ for every $k\ge 1$, $(\tx_k,\ty_k) = (x_k,y_k)$ where $x_k$ and $y_k$ are as in \eqref{eq:spp-xk} and \eqref{eq:spp-yk}, respectively, and $\varepsilon_k = \varepsilon_k^x + \varepsilon_k^y$ where
     \begin{align}
         \varepsilon_k^x &= f(x_k,y_{k-1}) - \ell_{f(\cdot,y_{k-1})}(x_k;x_{k-1}), \label{def:ekx}\\
         \varepsilon_k^y &= -f(x_{k-1},y_k) + \ell_{f(x_{k-1},\cdot)}(y_k;y_{k-1}). \label{def:eky}
     \end{align}
\end{proposition}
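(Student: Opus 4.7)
The plan is to directly verify the two defining conditions of IPPF, namely the inclusion \eqref{incl} and the inequality \eqref{ineq:IPP}, with the specified choices of parameters and iterates. Since $(\tx_k,\ty_k) = (x_k,y_k)$, the distance terms $\|x_k-\tx_k\|^2 + \|y_k-\ty_k\|^2$ in \eqref{ineq:IPP} vanish, which simplifies the second verification considerably. For the inclusion, I will use Lemma~\ref{lem:pd-equiv} to reduce to checking the equivalent primal-dual inequality \eqref{ineq:target}, and I will separately establish an $x$-side and a $y$-side bound that add up to the desired inequality.

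For the $x$-side, I will apply strong convexity of the objective in \eqref{eq:spp-xk} to obtain, for every $u \in \dom h_1$,
\[
\ell_{f(\cdot,y_{k-1})}(u;x_{k-1}) + h_1(u) \ge \ell_{f(\cdot,y_{k-1})}(x_k;x_{k-1}) + h_1(x_k) + \frac{1}{\lam}\inner{x_{k-1}-x_k}{u-x_k},
\]
where the $\lam^{-1}$-strong convexity contributes the inner-product term via a standard three-point identity. Replacing the left-hand linearization by $f(u,y_{k-1})$ (valid since $\ell_{f(\cdot,y_{k-1})}(\cdot;x_{k-1}) \le f(\cdot,y_{k-1})$ by convexity of $f(\cdot,y_{k-1})$) and adding and subtracting $f(x_k,y_{k-1})$ yields
\[
p_k(u) - p_k(x_k) \ge \frac{1}{\lam}\inner{x_{k-1}-x_k}{u-x_k} - \varepsilon_k^x,
\]
with $\varepsilon_k^x$ precisely as in \eqref{def:ekx}. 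The analogous argument applied to \eqref{eq:spp-yk} yields the $y$-side inequality with $\varepsilon_k^y$ as in \eqref{def:eky}. Summing the two inequalities matches \eqref{ineq:target} exactly with $\varepsilon_k = \varepsilon_k^x + \varepsilon_k^y$, so Lemma~\ref{lem:pd-equiv} gives \eqref{incl}.

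To verify \eqref{ineq:IPP}, I will use the subgradient-type bound \eqref{ineq:Lips-spp} from (B2) to obtain $\varepsilon_k^x \le 2M\|x_k-x_{k-1}\|$ and $\varepsilon_k^y \le 2M\|y_k-y_{k-1}\|$, and then apply Young's inequality $2ab \le a^2+b^2$ to each term to get
\[
2\lam \varepsilon_k \le \|x_k-x_{k-1}\|^2 + \|y_k-y_{k-1}\|^2 + 8M^2\lam^2.
\]
The choice $\lam = \sqrt{\delta/(8M^2)}$ makes $8M^2\lam^2 = \delta$, and combining this with $(\tx_k,\ty_k)=(x_k,y_k)$ and $\sigma=1$ yields \eqref{ineq:IPP} with $\delta_k=\delta$.

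I do not anticipate any major obstacle: the argument is essentially a careful bookkeeping of the error introduced by replacing $f(\cdot,y_{k-1})$ and $-f(x_{k-1},\cdot)$ by their linearizations in the composite subgradient step, and then verifying that this error is absorbed into the $\varepsilon_k$-subdifferential on the one hand and is compatible with the magnitude of the prox step on the other. The one minor care point will be ensuring the direction of the inequalities is correct when converting between $\varepsilon_k^x,\varepsilon_k^y$ and the function-versus-linearization gap (noting the sign flip for the concave $y$-side via $d_k$), but this is handled by the definitions \eqref{def:ekx}--\eqref{def:eky}.
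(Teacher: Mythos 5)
Your proposal is correct and follows essentially the same route as the paper's proof: the inclusion \eqref{incl} is verified by reducing to \eqref{ineq:target} via Lemma~\ref{lem:pd-equiv}, with the prox-step optimality (your three-point inequality is just the paper's subgradient optimality condition for \eqref{eq:spp-xk} written with the quadratic terms retained and then dropped) absorbing the linearization gaps into $\varepsilon_k^x,\varepsilon_k^y$, and \eqref{ineq:IPP} is verified by the same \eqref{ineq:Lips-spp}-plus-Young computation giving $8\lam^2M^2=\delta$. No gaps.
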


\subsection{Proximal bundle method for SPP} \label{subsec:PB-SPP}

In this subsection, we describe another instance of IPPF, namely PB-SPP, for solving SPP \eqref{eq:spp}. The inclusion of PB-SPP as an instance of IPPF is presented in Proposition~\ref{prop:CS-SPP-IPPF} below.

We start by stating PB-SPP. 

\begin{algorithm}[H]
\caption{Proximal Bundle for SPP \eqref{eq:spp}, PB-SPP$(x_0,y_0,\bar \varepsilon)$}\label{alg:PB-SPP}
\begin{algorithmic}
\REQUIRE given $(x_0,y_0) \in \dom h_1 \times \dom h_2$ and $\bar \varepsilon>0$
\FOR{$k=1,2,\cdots$}
\STATE $\bullet$  call oracles $(x_k,\tx_k) = \text{PDCP}(x_{k-1},\lam_k, \bar \varepsilon/4)$ and $(y_k,\ty_k) = \text{PDCP}(y_{k-1},\lam_k, \bar \varepsilon/4)$ and compute 
\begin{equation}\label{def:hxy}
    \bar x_k = \frac{1}{k}\sum_{i=1}^{k} \tx_i , \quad \bar y_k = \frac{1}{k}\sum_{i=1}^{k} \ty_i.
\end{equation}
\ENDFOR
\end{algorithmic} 
\end{algorithm}

Inspired by PPM \eqref{eq:ppm-spp-1} for solving SPP \eqref{eq:spp}, the $k$-th iteration of PB-SPP aims at approximately solving the decoupled proximal subproblems, i.e.,
\begin{align}
    &\min_{x\in \R^n}\left\{f(x,y_{k-1}) + h_1(x) + \frac{1}{2\lam_k}\|u-x_{k-1}\|^2\right\}, \label{eq:spp-dcp-x}\\
    &\min_{y\in \R^m}\left\{ -f(x_{k-1},y) + h_2(y) + \frac{1}{2\lam_k}\|v-y_{k-1}\|^2\right\}. \label{eq:spp-dcp-y}
\end{align}
Hence, the underlying $f$ in the call to PDCP$(x_{k-1},\lam_k, \bar \varepsilon)$ is $f(\cdot,y_{k-1})$ and the underlying $f$ in the call to PDCP$(y_{k-1},\lam_k, \bar \varepsilon)$ is $-f(x_{k-1},\cdot)$.
Correspondingly, similar to \eqref{eq:xk}, by calling the subroutine PDCP, PB-SPP exactly solves
\begin{align}
    x_k &= \underset{u\in\R^n}\argmin\left\{\Gamma_k^x(u) + h_1(u) + \frac{1}{2\lam_k}\|u-x_{k-1}\|^2 \right\}, \label{eq:xk-spp} \\
    y_k &= \underset{v\in\R^m}\argmin\left\{-\Gamma_k^y(v) + h_2(v) + \frac{1}{2\lam_k}\|v-y_{k-1}\|^2 \right\}, \label{eq:yk-spp}
\end{align}
where $\Gamma_k^x(\cdot)$ and $-\Gamma_k^y(\cdot)$ are the cutting-plane models constructed for $f(\cdot,y_{k-1})$ and $-f(x_{k-1},\cdot)$, respectively, by GBM (see step 2 of Algorithm \ref{alg:PDCP}). Hence, by the construction in GBM (i.e., Algorithm \ref{alg:GBM}) and the convexity of $f(\cdot,y_{k-1})$ and $-f(x_{k-1},\cdot)$, we have
\begin{equation}\label{ineq:cvx}
    \Gamma_k^x(\cdot) \le f(\cdot,y_{k-1}), \quad -\Gamma_k^y(\cdot) \le -f(x_{k-1},\cdot).
\end{equation}
Since GBM is a generic scheme, the models $\Gamma_k^x(\cdot)$ and $-\Gamma_k^y(\cdot)$ can be any one satisfying GBM, e.g., one-cut, two-cuts, and multiple-cuts schemes (i.e., (E1)-(E3)) described in Subsection~3.1 of \cite{liang2024unified}. As a result, PB-SPP is a template for many possible methods using GBM as their bundle management.

For ease of the convergence analysis of PB-SPP, we define
\begin{equation}\label{def:p-d}
    p_k^\lam (\cdot) : = p_k (\cdot) + \frac{1}{2\lam_k}\|\cdot - x_{k-1}\|^2, \quad d_k^\lam (\cdot) : = d_k (\cdot)+ \frac{1}{2\lam_k}\|\cdot - y_{k-1}\|^2,
\end{equation}
where $p_k$ and $d_k$ are as in \eqref{def:p-d-k}, $m_k^x$ and $m_k^y$ as the optimal values of \eqref{eq:xk-spp} and \eqref{eq:yk-spp}, respectively, and
\begin{equation}\label{def:tj-xy}
    t_k^x = p_k^\lam(\tx_k) - m_k^x, \quad t_k^y = d_k^\lam(\ty_k) - m_k^x.
\end{equation}

Following from Proposition \ref{prop:inner} and a simplification of Lemma \ref{lem:t1} using (B3), we obtain the convergence rates of $t_k^x$ and $t_k^y$.
We omit the proof since it is almost identical to that of Proposition~\ref{prop:inner}.

\begin{proposition}\label{prop:spp-inner}
    Considering Algorithm \ref{alg:PDCP} with $\tau_j = j/(j+2)$, then for every $j_k \ge 1$, we have
\[
t_k^x\le \frac{4MD}{l_k(l_k+1)} + \frac{16\lam_k M^2}{l_k+1}, \quad t_k^y\le \frac{4MD}{l_k(l_k+1)} + \frac{16\lam_k M^2}{l_k+1},
\]
where $l_k$ denotes the length of the $k$-th cycle ${\cal C}_k$ (i.e., $l_k = |{\cal C}_k| = j_k-i_k+1$).
\end{proposition}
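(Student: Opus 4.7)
The plan is to reduce each cycle of PB-SPP to two independent invocations of PDCP and then apply Proposition \ref{prop:inner} to each, with a simplified analogue of Lemma \ref{lem:t1} for the initial-iterate gap. First, I would observe that the $x$-subproblem \eqref{eq:xk-spp} is exactly an instance of the proximal subproblem \eqref{eq:phi-lam} with underlying convex function $f(\cdot, y_{k-1})$, composite term $h_1$, prox center $x_{k-1}$, and prox stepsize $\lam_k$; assumption (B2) supplies $\|f_x'(\cdot, y_{k-1})\| \le M$ uniformly on $\dom h_1$, so the hypotheses underlying Proposition \ref{prop:inner} hold with modulus $M$. Letting $l_k$ denote the local PDCP iteration count of cycle $k$, Proposition \ref{prop:inner} immediately yields
\[
t_k^x \;\le\; \frac{2\, t_1^x}{l_k(l_k+1)} + \frac{16 \lam_k M^2}{l_k + 1},
\]
where $t_1^x$ is the first-iterate gap of the $x$-subproblem. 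The same reasoning applied to the $y$-subproblem \eqref{eq:yk-spp} (with underlying convex function $-f(x_{k-1}, \cdot)$, composite term $h_2$, and supergradient bound $\|f_y'\| \le M$) gives the analogous bound for $t_k^y$ in terms of $t_1^y$.

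Next I would bound $t_1^x$ in the spirit of Lemma \ref{lem:t1}. Because $\Gamma_1(\cdot) = \ell_{f(\cdot, y_{k-1})}(\cdot; x_{k-1})$ and $\tx_1 = x_1$, repeating the first computation that led to \eqref{ineq:tik} gives
\[
t_1^x \;\le\; 2M \|x_1 - x_{k-1}\|.
\]
In the CNCO setting, Lemma \ref{lem:t1} required a delicate argument to bound $\|x_{i_k} - \hat x_{k-1}\|$ by propagating $d_0$ across outer cycles; in the SPP setting this is unnecessary because both $x_1$ and $x_{k-1}$ lie in $\dom h_1$, and assumption (B3) directly yields $\|x_1 - x_{k-1}\| \le D$, hence $t_1^x \le 2MD$. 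The identical argument with (B3) applied to $\dom h_2$ gives $t_1^y \le 2MD$.

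Substituting these two bounds into the estimates from the first step produces the claimed inequalities. The main obstacle would have been reworking Lemma \ref{lem:t1} in the SPP setting, but (B3) short-circuits that step, which is precisely why the paper notes that the proof is almost identical to Proposition \ref{prop:inner} and may be omitted.
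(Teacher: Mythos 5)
Your proposal is correct and follows exactly the route the paper intends: the paper omits the proof precisely because it consists of invoking Proposition \ref{prop:inner} on each of the two decoupled subproblems \eqref{eq:spp-dcp-x} and \eqref{eq:spp-dcp-y} (with $M$ supplied by (B2)) and replacing the elaborate bound of Lemma \ref{lem:t1} on the first-iterate gap by the direct estimate $t_{i_k}\le 2M\|x_{i_k}-x_{k-1}\|\le 2MD$ from (B3), which yields the constant $4MD$ in the numerator. Your reconstruction matches this in every step.
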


Given Proposition \ref{prop:spp-inner}, PDCP is able to solve \eqref{eq:spp-dcp-x} and \eqref{eq:spp-dcp-y} to any desired accuracy. For given tolerance $\bar \varepsilon>0$, the calls to PDCP in Algorithm \ref{alg:PB-SPP} guarantees
\begin{equation}\label{ineq:spp-accuracy}
    t_k^x \le \frac{\bar \varepsilon}4, \quad
    t_k^y \le \frac{\bar \varepsilon}4.
\end{equation}
Starting from \eqref{ineq:spp-accuracy}, we establish the iteration-complexity for PB-SPP to find a $\bar \varepsilon$-saddle-point of SPP \eqref{eq:spp}.

\begin{lemma}\label{lem:spp-outer}
For every $k \ge 1$ and  $(u,v) \in \dom h_1 \times \dom h_2$, we have
    \begin{align}
        p_k(\tx_k) &- p_k(u) \le \frac{\bar \varepsilon}4  + \frac{1}{2\lam_k}\|u- x_{k-1}\|^2 - \frac{1}{2\lam_k}\|u-x_k\|^2 - \frac{1}{2\lam_k}\|\tx_k - x_{k-1}\|^2, \label{ineq:p} \\
        d_k(\ty_k) &- d_k(v) \le \frac{\bar \varepsilon}4  + \frac{1}{2\lam_k}\|v- y_{k-1}\|^2 - \frac{1}{2\lam_k}\|v - y_k\|^2 - \frac{1}{2\lam_k}\|\ty_k - y_{k-1}\|^2. \label{ineq:d}
    \end{align}
\end{lemma}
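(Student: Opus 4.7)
The plan is to adapt the one-sided argument used in the CNCO analysis (specifically the reasoning behind the key inequality \eqref{ineq:key} in the proof of Lemma \ref{lem:bound}) to each of the two decoupled proximal subproblems \eqref{eq:spp-dcp-x} and \eqref{eq:spp-dcp-y}. Since the two inequalities \eqref{ineq:p} and \eqref{ineq:d} are symmetric in structure, I will carry out the argument for \eqref{ineq:p} and obtain \eqref{ineq:d} by an identical chain of reasoning with $(p_k,\Gamma_k^x,h_1,\tx_k,x_k,x_{k-1})$ replaced by $(d_k,-\Gamma_k^y,h_2,\ty_k,y_k,y_{k-1})$.

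The first step is to translate the termination guarantee of the PDCP subroutine into a useful inequality on $p_k^\lam(\tx_k)$. By construction (see Algorithm \ref{alg:PB-SPP}) the call to PDCP for the $x$-subproblem ensures $t_k^x\le \bar\varepsilon/4$; by the SPP analogue of Lemma \ref{ser-lemma1}(c) combined with the definitions in \eqref{def:p-d} and \eqref{def:tj-xy}, this is equivalent to
\[
p_k^\lam(\tx_k)\le \frac{\bar\varepsilon}{4}+m_k^x = \frac{\bar\varepsilon}{4}+(\Gamma_k^x+h_1)(x_k)+\frac{1}{2\lam_k}\|x_k-x_{k-1}\|^2.
\]

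The second step is to use strong convexity to relate $m_k^x$ to an arbitrary test point $u\in\dom h_1$. The objective of \eqref{eq:xk-spp} is $\lam_k^{-1}$-strongly convex with minimizer $x_k$ and optimal value $m_k^x$, so
\[
m_k^x+\frac{1}{2\lam_k}\|u-x_k\|^2 \le \Gamma_k^x(u)+h_1(u)+\frac{1}{2\lam_k}\|u-x_{k-1}\|^2 \le p_k(u)+\frac{1}{2\lam_k}\|u-x_{k-1}\|^2,
\]
where the last inequality uses the lower-bound model property $\Gamma_k^x(\cdot)\le f(\cdot,y_{k-1})$ from \eqref{ineq:cvx} together with the definition of $p_k$ in \eqref{def:p-d-k}.

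The final step is to combine the two displays above. Using the first to substitute the definition $p_k^\lam(\tx_k)=p_k(\tx_k)+\|\tx_k-x_{k-1}\|^2/(2\lam_k)$, canceling $m_k^x$, and rearranging yields exactly \eqref{ineq:p}. No step in this plan is delicate: both ingredients (strong convexity of the proximal subproblem's objective and the cutting-plane model lower bound) are inherited directly from the generic PDCP analysis, and the only tiny obstacle is making sure one invokes the SPP version of Lemma \ref{ser-lemma1}(c) with the tolerance $\bar\varepsilon/4$ (rather than $\bar\varepsilon$), which is precisely the accuracy with which PDCP is called in Algorithm \ref{alg:PB-SPP}. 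Applying the same three steps to the $y$-subproblem \eqref{eq:yk-spp}, using \eqref{ineq:cvx} for the $y$-model and the bound $t_k^y\le\bar\varepsilon/4$, produces \eqref{ineq:d} and completes the proof.
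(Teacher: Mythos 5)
Your proposal is correct and follows essentially the same route as the paper's proof: strong convexity of the objective in \eqref{eq:xk-spp} to bound $m_k^x$ against an arbitrary $u$, the model lower bound $\Gamma_k^x+h_1\le p_k$ from \eqref{ineq:cvx}, and the termination guarantee $t_k^x\le\bar\varepsilon/4$ from \eqref{def:tj-xy} and \eqref{ineq:spp-accuracy}, combined via the definition of $p_k^\lam$. The only cosmetic difference is that you package the first step as an ``SPP analogue of Lemma \ref{ser-lemma1}(c),'' whereas the paper invokes the definition of $t_k^x$ directly, but the content is identical.
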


\begin{proof}
We only prove \eqref{ineq:p} to avoid duplication. Inequality \eqref{ineq:d} follows similarly. Noting that the objective in \eqref{eq:xk-spp} is $\lam_k^{-1}$-strongly convex and using the definition of $m_k^x$, we have for every $u \in \R^n$,
\[
\Gamma_k^x(u) + h_1(u) + \frac1{2\lam_k}\|u-x_{k-1}\|^2 \ge m_k^x + \frac{1}{2\lam_k} \|u-x_{k}\|^2. 
\]
It follows from the definition of $p_k$ in \eqref{def:p-d-k} and the first inequality in \eqref{ineq:cvx} that $p_k(\cdot) \ge (\Gamma_k^x + h_1)(\cdot)$. Hence, we have for every $u \in \R^n$,
\[
p_k^\lam(\tx_k) - p_k(u) \le p_k^\lam(\tx_k) - m_k^x + \frac{1}{2\lam_k}\|u-x_{k-1}\|^2 - \frac{1}{2\lam_k}\|u - x_k\|^2.
\]
Therefore, inequality \eqref{ineq:p} immediately follows from the definition of $t_k^x$ in \eqref{def:tj-xy} and the first inequality in \eqref{ineq:spp-accuracy}.
\end{proof}

\begin{lemma}\label{lem:spp-outer-comb}
For every $k \ge 1$ and  $(u,v) \in \dom h_1 \times \dom h_2$, we have
    \begin{equation}\label{ineq:spp-outer-comb}
        \phi(\tx_k,v) - \phi(u,\ty_k) \le \frac{\bar \varepsilon}2 + \frac{1}{2\lam_k}\|z_{k-1}-w\|^2 - \frac{1}{2\lam_k}\|z_{k} - w\|^2 + 4\lam_kM^2,
    \end{equation}
    where $w = (u,v)$ and $z_k = (x_k,y_k)$.
\end{lemma}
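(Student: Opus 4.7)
The plan is to derive the bound by summing the two one-sided estimates from Lemma~\ref{lem:spp-outer}, reducing the task to controlling a residual of the form
\[
\phi(\tx_k,v) - \phi(u,\ty_k) - \bigl[p_k(\tx_k) + d_k(\ty_k) - p_k(u) - d_k(v)\bigr].
\]
Adding \eqref{ineq:p} and \eqref{ineq:d} and using $\|u-x_{k-1}\|^2+\|v-y_{k-1}\|^2=\|z_{k-1}-w\|^2$ and $\|u-x_k\|^2+\|v-y_k\|^2=\|z_k-w\|^2$, I obtain directly the target bound except for a leftover $-\tfrac{1}{2\lam_k}[\|\tx_k-x_{k-1}\|^2+\|\ty_k-y_{k-1}\|^2]$ term and a mismatch between $\phi$ and $p_k+d_k$. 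The whole task therefore reduces to showing that the residual above is dominated by $\tfrac{1}{2\lam_k}[\|\tx_k-x_{k-1}\|^2+\|\ty_k-y_{k-1}\|^2] + 4\lam_k M^2$.

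Next, I would expand $\phi$ and the definitions \eqref{def:p-d-k} of $p_k,d_k$. The $h_1,h_2$ terms cancel, leaving
\[
E := f(\tx_k,v) - f(u,\ty_k) - f(\tx_k,y_{k-1}) + f(u,y_{k-1}) + f(x_{k-1},\ty_k) - f(x_{k-1},v).
\]
To bound $E$ by something involving only $\|\tx_k-x_{k-1}\|$ and $\|\ty_k-y_{k-1}\|$ (but nothing involving $u,v$), I would insert $f(x_{k-1},y_{k-1})$ as a pivot and regroup into four differences:
\[
E = \bigl[f(\tx_k,v)-f(x_{k-1},v)\bigr] + \bigl[f(x_{k-1},\ty_k)-f(x_{k-1},y_{k-1})\bigr] + \bigl[f(x_{k-1},y_{k-1})-f(\tx_k,y_{k-1})\bigr] + \bigl[f(u,y_{k-1})-f(u,\ty_k)\bigr].
\]
Each difference varies only one argument of $f$, so assumption (B2) (which via convexity/concavity of $f$ in each argument yields $M$-Lipschitz continuity in each argument separately) bounds each by $M$ times the corresponding norm. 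Summing yields $E \le 2M\|\tx_k-x_{k-1}\| + 2M\|\ty_k-y_{k-1}\|$.

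Finally, Young's inequality $2ab \le a^2/(2\lam_k) + 2\lam_k b^2$ with $b=M$ converts each $2M\|\cdot\|$ term into $\tfrac{1}{2\lam_k}\|\cdot\|^2+2\lam_k M^2$, producing the required bound on $E$. Combining with the sum of \eqref{ineq:p} and \eqref{ineq:d}, the leftover $\tfrac{1}{2\lam_k}[\|\tx_k-x_{k-1}\|^2+\|\ty_k-y_{k-1}\|^2]$ terms cancel exactly and the proof is complete.

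The main obstacle is really a bookkeeping one: choosing the right pivot ($f(x_{k-1},y_{k-1})$) and the right four-term grouping of $E$ so that none of the residual norms involves the test points $u,v$, only $\tx_k-x_{k-1}$ and $\ty_k-y_{k-1}$. Once that regrouping is identified, everything else is one application of coordinate-wise Lipschitz continuity followed by Young's inequality.
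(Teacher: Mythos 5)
Your proposal is correct and follows essentially the same route as the paper's proof: both arguments combine the two estimates of Lemma~\ref{lem:spp-outer}, bound the mismatch between $\phi(\tx_k,v)-\phi(u,\ty_k)$ and $p_k(\tx_k)-p_k(u)+d_k(\ty_k)-d_k(v)$ by $2M\|\tx_k-x_{k-1}\|+2M\|\ty_k-y_{k-1}\|$ via four coordinate-wise $M$-Lipschitz estimates from (B2), and absorb these with Young's inequality against the negative quadratic terms. The only cosmetic difference is that you telescope the residual through the pivot $f(x_{k-1},y_{k-1})$ after summing, whereas the paper pivots through $f(\tx_k,\ty_k)$ (which cancels upon summation) before summing; the resulting bounds are identical.
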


\begin{proof}
It follows from (B2) that for every $u \in \dom h_1$,
\[ 
f(u,y_{k-1}) - f(u,\ty_k) \stackrel{\eqref{ineq:spp-Lips}}\le M \|\ty_k - y_{k-1}\|, \quad f(\tx_k,\ty_k) - f(\tx_k,y_{k-1}) \stackrel{\eqref{ineq:spp-Lips}}\le M \|\ty_k - y_{k-1}\|.
\]
Noting from \eqref{def:p-d-k} that $p_k(\tilde x_k) - p_k(u) = f(\tx_k, y_{k-1}) + h_1(\tx_k) - f(u,y_{k-1})- h_1(u)$, using this relation and the above inequality in \eqref{ineq:p}, we have for every $u \in \dom h_1$,
\begin{align*}
& f(\tx_k,\ty_k) + h_1(\tx_k) - f(u,\ty_k)- h_1(u) \\
\stackrel{\eqref{ineq:p}}\le & \frac{\bar \varepsilon}4  + \frac{1}{2\lam_k}\|x_{k-1}-u\|^2 - \frac{1}{2\lam_k}\|x_{k} - u\|^2 + 2M\|\ty_k-y_{k-1}\| - \frac{1}{2\lam_k}\|\tx_k- x_{k-1}\|^2.
\end{align*}
Similarly, using \eqref{ineq:d}, we can prove for every $v \in \dom h_2$,
\begin{align*}
    &-f(\tx_k,\ty_k)+ h_2(\ty_k)  + f(\tx_k,v)- h_2(v) \\
    \stackrel{\eqref{ineq:d}}\le & \frac{\bar \varepsilon}4  + \frac{1}{2\lam_k}\|y_{k-1}-v\|^2 - \frac{1}{2\lam_k}\|y_{k} - v\|^2 + 2M\|\tx_k-x_{k-1}\| - \frac{1}{2\lam_k}\|\ty_k- y_{k-1}\|^2.
\end{align*}
Noting that $2M a - a^2/(2\lam_k) \le 2\lam_k M^2$ for $a\in\R$ and summing the above two inequalities, we obtain
\begin{align*}
    \phi(\tx_k,v) - \phi(u,\ty_k)
    \stackrel{\eqref{eq:spp}}=&f(\tx_k,v)+ h_1(\tx_k) - h_2(v) - f(u,\ty_k)- h_1(u) + h_2(\ty_k) \\
    \le & \frac{\bar \varepsilon}2 + \frac{1}{2\lam_k}\|z_{k-1}-w\|^2 - \frac{1}{2\lam_k}\|z_{k} - w\|^2 + 4\lam_kM^2,
\end{align*}
where the identity is due to the definition of $\phi(\cdot,\cdot)$ in \eqref{eq:spp}.
\end{proof}

\begin{proposition}\label{prop:pb-spp}
    For every $k \ge 1$, setting $\lam_k = \lam_1/\sqrt{k}$ for some $\lam_1>0$, then for every $(u,v) \in \dom h_1 \times \dom h_2$, we have
    \begin{equation}\label{ineq:spp-gap}
        \varphi(\bar x_k) - \psi(\bar y_k) \le \frac{\bar \varepsilon}2 + \frac{8\lam_1 M^2}{\sqrt{k}} + \frac{D^2}{2\lam_1 \sqrt{k}},
    \end{equation}
    where $\bar x_k$ and $\bar y_k$ are as in \eqref{def:hxy}.
\end{proposition}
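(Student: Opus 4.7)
The plan is to start from Lemma~\ref{lem:spp-outer-comb}, which for each $k$ provides a per-iteration inequality of the form
\[
\phi(\tx_k,v) - \phi(u,\ty_k) \le \frac{\bar \varepsilon}{2} + \frac{1}{2\lam_k}\|z_{k-1}-w\|^2 - \frac{1}{2\lam_k}\|z_k-w\|^2 + 4\lam_k M^2,
\]
and convert it into a bound on $\varphi(\bar x_k)-\psi(\bar y_k)$ by telescoping/averaging. First I would replace the index $k$ by $i$, sum over $i=1,\ldots,k$, and separately handle the three groups of terms on the right-hand side: the constant $\bar\varepsilon/2$ contribution (trivial, giving $k\bar\varepsilon/2$), the varying-stepsize telescope, and the $4\lam_i M^2$ term.

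For the telescoping part, the main subtlety (and what I expect to be the key technical obstacle) is that $\lam_i = \lam_1/\sqrt{i}$ varies, so a direct telescoping does not work. I would use Abel summation to rewrite
\[
\sum_{i=1}^k \frac{1}{2\lam_i}\bigl(\|z_{i-1}-w\|^2 - \|z_i-w\|^2\bigr)
= \frac{1}{2\lam_1}\|z_0-w\|^2 - \frac{1}{2\lam_k}\|z_k-w\|^2 + \sum_{i=2}^k \left(\frac{1}{2\lam_i} - \frac{1}{2\lam_{i-1}}\right)\|z_{i-1}-w\|^2.
\]
Since $\lam_i$ is decreasing, the coefficients $1/(2\lam_i) - 1/(2\lam_{i-1})$ are nonnegative, so by assumption (B3) (which bounds the diameter of $\dom h_1 \times \dom h_2$ by $D$, and hence $\|z_{i-1}-w\|^2 \le D^2$ and $\|z_0-w\|^2\le D^2$), the whole sum is bounded by $D^2/(2\lam_k)$. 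For the stepsize-sum term, I would use the standard estimate $\sum_{i=1}^k i^{-1/2}\le 2\sqrt{k}$ to get $\sum_{i=1}^k 4\lam_i M^2 \le 8\lam_1 M^2 \sqrt{k}$.

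Combining these, dividing by $k$, and using the identity $1/(2\lam_k k) = 1/(2\lam_1 \sqrt{k})$ yields
\[
\frac{1}{k}\sum_{i=1}^k \phi(\tx_i,v) - \frac{1}{k}\sum_{i=1}^k \phi(u,\ty_i) \le \frac{\bar\varepsilon}{2} + \frac{8\lam_1 M^2}{\sqrt{k}} + \frac{D^2}{2\lam_1\sqrt{k}}
\]
for every $(u,v)\in\dom h_1\times\dom h_2$. Finally, I would apply convexity of $\phi(\cdot,v)$ and concavity of $\phi(u,\cdot)$ together with the definition of $(\bar x_k,\bar y_k)$ in \eqref{def:hxy} to obtain $\phi(\bar x_k,v) \le k^{-1}\sum_{i}\phi(\tx_i,v)$ and $\phi(u,\bar y_k) \ge k^{-1}\sum_{i}\phi(u,\ty_i)$, and then take the supremum over $v\in\dom h_2$ and infimum over $u\in\dom h_1$, invoking the definitions of $\varphi$ and $\psi$ in \eqref{def:spp-func} to conclude \eqref{ineq:spp-gap}.
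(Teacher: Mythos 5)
Your proposal is correct and follows essentially the same route as the paper's proof: sum the inequality of Lemma~\ref{lem:spp-outer-comb}, handle the varying-stepsize telescope by Abel summation together with the diameter bound from (B3) to get $D^2/(2\lam_k k)=D^2/(2\lam_1\sqrt{k})$, bound $\sum_i \lam_i$ via $\sum_{i=1}^k i^{-1/2}\le 2\sqrt{k}$, and finish with convexity/concavity and maximization over $(u,v)$ using \eqref{def:spp-func}. The only (immaterial) difference is the order in which you apply the convexity step relative to the summation estimates.
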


\begin{proof}
Replacing the index $k$ in \eqref{ineq:spp-outer-comb} by $i$, summing the resulting inequality from $i=1$ to $k$, and using \eqref{def:hxy} and the convexity of $\phi(\cdot,y)$ and $-\phi(x,\cdot)$, we have for every $(u,v) \in \dom h_1 \times \dom h_2$,
\begin{equation}\label{ineq:spp-sum}
    \phi(\bar x_k,v) - \phi(u,\bar y_k)
    \le \frac{\bar \varepsilon}2 + \frac1k \sum_{i=1}^k \left[ \frac{1}{2\lam_i} (\|z_{i-1}-w\|^2 -\|z_i - w\|^2)  + 4\lam_i M^2\right].
\end{equation}
It follows from the fact that $\lam_k = \lam_1/\sqrt{k}$ and assumption (B3) that
\begin{align}
\frac1k \sum_{i=1}^k \Big[ \frac{1}{2\lam_i} \Big(\|z_{i-1}-w\|^2 -&\|z_i - w\|^2\Big) \Big]  \le \frac1{2k} \left[\frac{\|z_0 - w\|^2}{\lam_1} + \sum_{i=1}^{k-1}\|z_i - w\|^2\left(\frac{1}{\lam_{i+1}}-\frac{1}{\lam_i}\right)\right]   \nn\\
& \le \frac{D^2}{2k} \left[\frac{1}{\lam_1} + \sum_{i=1}^{k-1}\left(\frac{1}{\lam_{i+1}}-\frac{1}{\lam_i}\right)\right] = \frac{D^2}{2 k \lam_k} = \frac{D^2}{2\lam_1\sqrt{k}}. \label{ineq:tele}
\end{align}
Observing that $\sum_{i=1}^k (1/\sqrt{i}) \le \int_0^k (1/\sqrt{x}) dx = 2 \sqrt{k}$, and hence
\[
\frac{1}{k}\sum_{i=1}^k 4\lam_iM^2 = \frac{1}{k}\sum_{i=1}^k \frac{4\lam_1M^2}{\sqrt{i}} \le \frac{8\lam_1 M^2}{\sqrt{k}}.
\]
This observation, \eqref{ineq:spp-sum}, and \eqref{ineq:tele} imply that
    \[
    \phi(\bar x_k,v) - \phi(u,\bar y_k) \le \frac{\bar \varepsilon}2 + \frac{8\lam_1 M^2}{\sqrt{k}} + \frac{D^2}{2\lam_1 \sqrt{k}}.
    \]
Maximizing the left-hand side over $(u,v) \in \R^n \times \R^m$ and using \eqref{def:spp-func} yield \eqref{ineq:spp-gap}.
\end{proof}

We are now ready to establish the iteration-complexity for PB-SPP to find a $\bar \varepsilon$-saddle-point.

\begin{theorem}\label{thm:pb-spp}
Given $(x_0,y_0,\bar \varepsilon) \in \dom h_1 \times \dom h_2 \times R_{++}$, letting $\lam_1 = D/(4M)$, then the iteration-complexity for PB-SPP$(x_0,y_0)$ to find a $\bar \varepsilon$-saddle-point $(\bar x_k, \bar y_k)$ of \eqref{eq:spp} is ${\cal O}((MD/\bar \varepsilon)^{2.5})$.
\end{theorem}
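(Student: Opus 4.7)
The plan is to combine the outer-loop bound from Proposition~\ref{prop:pb-spp} with the per-cycle inner-loop bound from Proposition~\ref{prop:spp-inner}, and then sum the cycle lengths. First, substituting $\lam_1 = D/(4M)$ into \eqref{ineq:spp-gap} balances the two $k$-dependent terms and yields
\[
\varphi(\bar x_k) - \psi(\bar y_k) \le \frac{\bar\varepsilon}{2} + \frac{4MD}{\sqrt{k}}.
\]
Requiring the right-hand side to be at most $\bar\varepsilon$ forces the number of outer cycles $K$ needed to certify a $\bar\varepsilon$-saddle-point to be ${\cal O}(M^2 D^2/\bar\varepsilon^2)$. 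Note that $(\bar x_k, \bar y_k) \in \dom h_1 \times \dom h_2$ by convexity of the domains and \eqref{def:hxy}, so that $\varphi(\bar x_k)-\psi(\bar y_k)\le \bar\varepsilon$ is exactly the $\bar\varepsilon$-saddle-point condition \eqref{def:eps-spp-equiv}.

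Next, I would bound the cost of cycle $k$ via Proposition~\ref{prop:spp-inner}. With $\lam_k = \lam_1/\sqrt{k} = D/(4M\sqrt{k})$, the termination criterion $t_k^x, t_k^y \le \bar\varepsilon/4$ from \eqref{ineq:spp-accuracy} can be enforced by choosing the cycle length
\[
l_k = {\cal O}\!\left(\sqrt{\frac{MD}{\bar\varepsilon}} + \frac{\lam_k M^2}{\bar\varepsilon}\right) = {\cal O}\!\left(\sqrt{\frac{MD}{\bar\varepsilon}} + \frac{MD}{\bar\varepsilon \sqrt{k}}\right),
\]
where the two terms come from separately forcing the $4MD/(l_k(l_k+1))$ and $16\lam_k M^2/(l_k+1)$ terms in Proposition~\ref{prop:spp-inner} to be at most $\bar\varepsilon/8$.

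The last step is to sum $l_k$ from $k=1$ to $K = {\cal O}((MD/\bar\varepsilon)^2)$. The constant piece contributes $K \cdot \sqrt{MD/\bar\varepsilon} = {\cal O}((MD/\bar\varepsilon)^{5/2})$, while the $1/\sqrt{k}$ piece contributes, via $\sum_{k=1}^K 1/\sqrt{k} = {\cal O}(\sqrt{K})$, a total of ${\cal O}((MD/\bar\varepsilon)\sqrt{K}) = {\cal O}((MD/\bar\varepsilon)^2)$. The first dominates, giving the claimed total iteration-complexity ${\cal O}((MD/\bar\varepsilon)^{5/2})$. There is no genuine technical obstacle beyond the bookkeeping of the diminishing stepsize against the inner bound; the key observation is that the choice $\lam_1 = D/(4M)$ balances the two $k$-dependent terms in \eqref{ineq:spp-gap} and thereby minimizes $K$, and that the diminishing schedule $\lam_k \propto 1/\sqrt{k}$ still produces the correct telescoping in \eqref{ineq:tele} while keeping each $l_k$ essentially $O(\sqrt{MD/\bar\varepsilon})$.
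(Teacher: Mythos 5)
Your proposal is correct and follows essentially the same route as the paper's proof: apply Proposition~\ref{prop:pb-spp} with $\lam_1=D/(4M)$ to get $K={\cal O}(M^2D^2/\bar\varepsilon^2)$ outer cycles, use Proposition~\ref{prop:spp-inner} with $\lam_k=\lam_1/\sqrt{k}$ to bound each cycle length by ${\cal O}(\sqrt{MD/\bar\varepsilon}+MD/(\bar\varepsilon\sqrt{k}))$, and sum. The only cosmetic difference is that you force each of the two inner-bound terms below $\bar\varepsilon/8$ separately, whereas the paper writes the sufficient $l_i$ directly; the asymptotics are identical.
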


\begin{proof}
It follows from Proposition \ref{prop:pb-spp} with $\lam_1 = D/(4M)$ that
\[
\varphi(\bar x_k) - \psi(\bar y_k) \le \frac{\bar \varepsilon}2 + \frac{4 M D}{\sqrt{k}}.
\]
Hence, PB-SPP takes $k=64M^2D^2/\bar \varepsilon^2$ iterations to find the $\bar \varepsilon$-saddle-point $(\bar x_k, \bar y_k)$.
Using Proposition \ref{prop:spp-inner}, we know to have \eqref{ineq:spp-accuracy} holds for every cycle ${\cal C}_i$, it is sufficient to have
\[
l_i = \frac{\sqrt{32MD}}{\sqrt{\bar \varepsilon}} + \frac{128\lam_i M^2}{\bar \varepsilon} = \frac{\sqrt{32MD}}{\sqrt{\bar \varepsilon}} + \frac{32M D}{\bar \varepsilon \sqrt{i}},
\]
where the second identity is due to the facts that $\lam_i=\lam_1/\sqrt{i}$ and $\lam_1 = D/(4M)$.
As a consequence, the total number of iterations (of proximal mappings of $h_1$ and $h_2$, and of calls to subgradient oracles $f_x'$ and $f_y'$) is
\[
\sum_{i=1}^k l_i = \frac{\sqrt{32MD}}{\sqrt{\bar \varepsilon}} k + \sum_{i=1}^k \frac{32M D}{\bar \varepsilon \sqrt{i}} \le \frac{256\sqrt{2}M^{2.5} D^{2.5}}{\bar \varepsilon^{2.5}} + \frac{512 M^2 D^2}{\bar \varepsilon^2},
\]
where we use the facts that$\sum_{i=1}^k (1/\sqrt{i}) \le \int_0^k (1/\sqrt{x}) dx = 2 \sqrt{k}$ and $k=64M^2D^2/\bar \varepsilon^2$.
\end{proof}




Finally, we conclude this subsection by presenting that PB-SPP is an instance of IPPF. The proof is postponed to Subsection \ref{subsec:pf-PB-IPPF}.

\begin{proposition}\label{prop:PB-SPP-IPPF}
    Given $(x_0,y_0)\in \dom h_1 \times \dom h_2$, $\bar \varepsilon>0$, then
     PB-SPP$(x_0,y_0,\bar \varepsilon)$ is an instance of IPPF with $\sigma=0$, $\delta_k=\lam_k \bar \varepsilon/2$, and $\varepsilon_k = \varepsilon_k^x + \varepsilon_k^y$ where
     \begin{align}
         \varepsilon_k^x &= p_k(\tx_k) - (\Gamma_k^x+h_1)(x_k) + \frac{1}{\lam_k}\inner{x_{k-1}-x_k}{x_k-\tx_k}, \label{def:ekx-pb}\\
         \varepsilon_k^y &= d_k(\ty_k) -(-\Gamma_k^y+h_2)(y_k)+ \frac{1}{\lam_k}\inner{y_{k-1}-y_k}{y_k-\ty_k}. \label{def:eky-pb}
     \end{align}
\end{proposition}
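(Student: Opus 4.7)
The plan is to verify the two defining conditions of IPPF---the $\varepsilon$-subdifferential inclusion \eqref{incl} and the stopping inequality \eqref{ineq:IPP}---for the pair $(\tx_k,\ty_k)$ returned by PB-SPP. Since the $x$- and $y$-updates in PB-SPP are decoupled proximal minimizations of the forms \eqref{eq:xk-spp}--\eqref{eq:yk-spp}, each component will be handled separately and the results combined via the splitting $\varepsilon_k=\varepsilon_k^x+\varepsilon_k^y$.

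For the inclusion, I would first invoke Lemma \ref{lem:pd-equiv} to replace \eqref{incl} by the equivalent variational inequality \eqref{ineq:target}. The optimality condition for \eqref{eq:xk-spp} yields, in the spirit of Lemma \ref{ser-lemma1}(b), the subgradient relation $(x_{k-1}-x_k)/\lam_k\in\partial(\Gamma_k^x+h_1)(x_k)$, so that
\[
(\Gamma_k^x+h_1)(u)\ge (\Gamma_k^x+h_1)(x_k) + \frac{1}{\lam_k}\inner{x_{k-1}-x_k}{u-x_k},\quad\forall u\in\R^n.
\]
Using \eqref{ineq:cvx} to upper-bound $(\Gamma_k^x+h_1)(u)$ by $p_k(u)$ on the left, and decomposing $u-x_k=(u-\tx_k)+(\tx_k-x_k)$ on the right, one arrives at
\[
p_k(u)-p_k(\tx_k)\ge \frac{1}{\lam_k}\inner{x_{k-1}-x_k}{u-\tx_k}-\varepsilon_k^x
\]
with $\varepsilon_k^x$ precisely as in \eqref{def:ekx-pb}. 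Nonnegativity $\varepsilon_k^x\ge 0$ follows from evaluating the displayed subgradient inequality at $u=\tx_k$ and invoking $(\Gamma_k^x+h_1)(\tx_k)\le p_k(\tx_k)$. Repeating the argument on \eqref{eq:yk-spp} with $d_k$ in place of $p_k$ yields the analogous $y$-inequality with $\varepsilon_k^y$ as in \eqref{def:eky-pb}, and summing the two inequalities delivers \eqref{ineq:target}.

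For the stopping inequality, the crucial step is the identity
\[
\|x_k-\tx_k\|^2+2\lam_k\varepsilon_k^x=2\lam_k t_k^x,
\]
together with its $y$-analogue. Starting from \eqref{def:ekx-pb} and applying the completing-the-square identity $\|a\|^2+2\inner{b}{a}=\|a+b\|^2-\|b\|^2$ with $a=x_k-\tx_k$ and $b=x_{k-1}-x_k$, the left-hand side rewrites as
\[
2\lam_k[p_k(\tx_k)-(\Gamma_k^x+h_1)(x_k)]+\|x_{k-1}-\tx_k\|^2-\|x_{k-1}-x_k\|^2,
\]
which rearranges to $2\lam_k[p_k^\lam(\tx_k)-m_k^x]=2\lam_k t_k^x$ by \eqref{def:p-d} and the definition of $m_k^x$ as the optimal value of \eqref{eq:xk-spp}. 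The PDCP stopping bounds \eqref{ineq:spp-accuracy} then force $t_k^x,t_k^y\le\bar\varepsilon/4$, and summing the two identities yields \eqref{ineq:IPP} with $\sigma=0$ and the stated $\delta_k$. The hardest piece is the completing-the-square identity that collapses the inner-product cross-term in $\varepsilon_k^x$ against $\|x_k-\tx_k\|^2$ and realigns them with $p_k^\lam(\tx_k)-m_k^x$; once this is executed componentwise, the remainder of the argument is routine bookkeeping.
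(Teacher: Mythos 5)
Your proposal is correct and follows essentially the same route as the paper: derive the subgradient inequality from the optimality of \eqref{eq:xk-spp}, relax $(\Gamma_k^x+h_1)(u)$ to $p_k(u)$ via \eqref{ineq:cvx}, shift the base point from $x_k$ to $\tx_k$ to produce exactly $\varepsilon_k^x$, invoke Lemma \ref{lem:pd-equiv}, and then convert $\|x_k-\tx_k\|^2+2\lam_k\varepsilon_k^x$ into the PDCP gap by completing the square; your explicit check that $\varepsilon_k^x\ge 0$ is a small addition the paper omits. One quantitative remark: your identity $\|x_k-\tx_k\|^2+2\lam_k\varepsilon_k^x=2\lam_k t_k^x$ is the correct one (the paper's displayed version drops the factor $2$), but it then yields the bound $2\lam_k(t_k^x+t_k^y)\le\lam_k\bar\varepsilon$ under \eqref{ineq:spp-accuracy}, so the conclusion holds with $\delta_k=\lam_k\bar\varepsilon$ rather than the stated $\lam_k\bar\varepsilon/2$; this constant is immaterial to the rest of the paper but your last sentence should not claim the stated $\delta_k$ without adjusting it.
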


\subsection{An optimal bound}\label{subsec:opt}

Note that the complexity bound ${\cal O}((MD/\bar \varepsilon)^{2.5})$ established in Theorem \ref{thm:pb-spp} holds for any bundle model $\Gamma_k^x$ and $-\Gamma_k^y$ generated by GBM, such as one-cut, two-cuts, and multiple-cuts schemes described in Subsection~3.1 of \cite{liang2024unified}.
However, the bound is worse than the optimal one ${\cal O}((MD/\bar \varepsilon)^2)$. This subsection is devoted to the development of the improved bound for the PB-SPP method whose subroutine PDCP uses the bundle model $\Gamma_j$ satisfying GBM but with \eqref{def:Gammaj} replaced by a stronger condition
\begin{equation}\label{ineq:Gamma1}
    \Gamma_{j+1}(\cdot) \ge \max\left\{\bar \Gamma_j(\cdot), \ell_f(\cdot; x_j)\right\}.
\end{equation} 
We also assume that the bundle model $\Gamma_j$ is $M$-Lipschitz continuous. It is easy to verify that both two-cuts and multiple-cuts schemes (i.e., \eqref{eq:tc} and \eqref{eq:mc}, respectively) satisfy the Lipschitz continuity and \eqref{ineq:Gamma1}.
However, the one-cut scheme \eqref{eq:one-cut} does not satisfy \eqref{ineq:Gamma1}.

The key to achieving the desired improvement lies in obtaining tighter bounds on $t_k^x$ and $t_k^y$ in Proposition~\ref{prop:spp-inner}. This, in turn, requires a more refined analysis of the PDCP subroutine used for solving \eqref{eq:spp-dcp-x} and \eqref{eq:spp-dcp-y}. To that end, we revisit the analysis of PDCP in Subsection~\ref{subsec:PDCP}, now under the setting where the condition \eqref{def:Gammaj} used in GBM is replaced by the stronger condition \eqref{ineq:Gamma1}.

To set the stage, we fix the prox center $x_{k-1}$ as in \eqref{eq:phi-lam}, denote it as $x_0$ to emphasize a local perspective within the current cycle, and recall the notation
\begin{equation}\label{eq:phi2}
    \phi^\lam(\cdot) = \phi(\cdot) + \frac{1}{2\lam}\|\cdot-x_0\|^2.
\end{equation}
We begin the analysis with the following technical result.

\begin{lemma}\label{lem:dist}
Let $F_1,F_2:\R^n \to \R$ be two $\mu$-strongly convex functions for some $\mu>0$ and their corresponding minimizers be $x_1^*$ and $x_2^*$. Assume that $F_1-F_2$ is an $L$-Lipschitz continuous function for some $L>0$, then $\|x_1^*-x_2^*\|\le L/\mu$.
\end{lemma}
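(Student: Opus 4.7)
The plan is to combine strong convexity at each minimizer with the Lipschitz control on the difference $F_1 - F_2$. By $\mu$-strong convexity and the optimality of $x_i^*$ (so that $0 \in \partial F_i(x_i^*)$), we get the standard quadratic growth bound
\[
F_i(x) \ge F_i(x_i^*) + \frac{\mu}{2}\|x - x_i^*\|^2, \quad i=1,2.
\]
Applying this with $i=1$, $x=x_2^*$ and with $i=2$, $x=x_1^*$, and then adding the two resulting inequalities, the terms $F_1(x_1^*)$ and $F_2(x_2^*)$ can be moved to the left-hand side to yield
\[
\bigl[F_1(x_2^*) - F_2(x_2^*)\bigr] - \bigl[F_1(x_1^*) - F_2(x_1^*)\bigr] \ge \mu\,\|x_1^* - x_2^*\|^2.
\]

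Next, I would invoke the $L$-Lipschitz continuity of $F_1 - F_2$ to bound the left-hand side from above by $L\,\|x_1^* - x_2^*\|$. Combining this with the display above gives
\[
\mu\,\|x_1^* - x_2^*\|^2 \le L\,\|x_1^* - x_2^*\|,
\]
and dividing by $\|x_1^* - x_2^*\|$ (the case $x_1^* = x_2^*$ is trivial) yields $\|x_1^* - x_2^*\| \le L/\mu$, which is the desired conclusion.

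There is no substantive obstacle here: the whole argument is a two-line manipulation of the strong-convexity inequality plus one application of the Lipschitz hypothesis. The only thing to be a bit careful about is that strong convexity is used in the ``quadratic growth around the minimizer'' form, which follows from the definition of $\mu$-strong convexity combined with the first-order optimality $0 \in \partial F_i(x_i^*)$; no additional smoothness of $F_1$ or $F_2$ individually is required, only that their difference is globally $L$-Lipschitz.
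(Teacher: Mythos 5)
Your proof is correct and follows essentially the same route as the paper: both arguments add the quadratic-growth inequalities $F_i(x)\ge F_i(x_i^*)+\tfrac{\mu}{2}\|x-x_i^*\|^2$ evaluated at the other minimizer to get $\mu\|x_1^*-x_2^*\|^2$ below the difference $G(x_2^*)-G(x_1^*)$ with $G=F_1-F_2$, then bound that difference by $L\|x_1^*-x_2^*\|$ via Lipschitz continuity. Your handling of the trivial case $x_1^*=x_2^*$ before dividing is a minor point of extra care not spelled out in the paper.
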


\begin{proof}
Let $G:=F_1-F_2$. Using the $\mu$-strong convexity of $F_1$ and $F_2$, we have
\begin{align*}
    |G(x_1^*)-G(x_2^*)| &= |F_1(x_1^*)-F_2(x_1^*) - F_1(x_2^*) + F_2(x_2^*)| \\
    &= F_1(x_2^*) - F_1(x_1^*) + F_2(x_1^*) - F_2(x_2^*)\\
    &\ge \frac{\mu}{2}\|x_2^* - x_1^*\|^2 + \frac{\mu}{2}\|x_1^* - x_2^*\|^2
    = \mu\|x_2^* - x_1^*\|^2.
\end{align*}
It follows from the above inequality and the $L$-Lipschitz continuity of $G$ that
\[
\mu\|x_2^* - x_1^*\|^2\le |G(x_1^*)-G(x_2^*)| \le L \|x_2^* - x_1^*\|,
\]
and hence the lemma holds.
\end{proof}

Next, we present a bound on the distance between consecutive iterates $x_{j-1}$ and $x_j$. 

\begin{lemma}\label{lem:xj}
Letting $\Gamma_1(\cdot) = \ell_f(\cdot;x_0)$ and assuming that $\Gamma_{j+1}$ is $M$-Lipschitz continuous and satisfies \eqref{ineq:Gamma1} for every $j\ge 1$. Then, we have $\|x_j-x_{j-1}\| \le 2 \lam M$ for every $j\ge 2$. 
\end{lemma}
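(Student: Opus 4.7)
The plan is to apply Lemma~\ref{lem:dist} directly, viewing $x_{j-1}$ and $x_j$ as minimizers of two strongly convex functions that differ only in their bundle-model component, and whose difference is Lipschitz by hypothesis.

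First, I would set
\[
F_1(u) := \Gamma_{j-1}(u) + h(u) + \frac{1}{2\lam}\|u-x_0\|^2, \qquad F_2(u) := \Gamma_j(u) + h(u) + \frac{1}{2\lam}\|u-x_0\|^2,
\]
and observe that by the definition of $x_j$ in \eqref{eq:xj} applied at iterations $j-1$ and $j$, we have $x_{j-1} = \argmin F_1$ and $x_j = \argmin F_2$. Since $\Gamma_{j-1}$, $\Gamma_j$, and $h$ are convex, and the quadratic term is $\lam^{-1}$-strongly convex, both $F_1$ and $F_2$ are $\lam^{-1}$-strongly convex.

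Next, I would control the Lipschitz constant of $F_1 - F_2 = \Gamma_{j-1} - \Gamma_j$. The hypothesis says that $\Gamma_j$ is $M$-Lipschitz for every $j \ge 1$; the base case $\Gamma_1(\cdot) = \ell_f(\cdot;x_0)$ is affine with slope $f'(x_0)$, and by (A2) we have $\|f'(x_0)\| \le M$, so $\Gamma_1$ is indeed $M$-Lipschitz. Consequently, both $\Gamma_{j-1}$ and $\Gamma_j$ are $M$-Lipschitz, and the triangle inequality yields that $F_1 - F_2$ is $2M$-Lipschitz.

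Finally, I would invoke Lemma~\ref{lem:dist} with $\mu = 1/\lam$ and $L = 2M$, which immediately gives $\|x_j - x_{j-1}\| \le L/\mu = 2\lam M$, as desired. The proof is essentially a one-line application of the preceding lemma once the correct identifications are made; I do not anticipate a substantive obstacle, since the only subtlety is verifying that the Lipschitz continuity hypothesis on $\Gamma_j$ propagates to the base case, which follows trivially from (A2). Notably, this argument uses neither the stronger GBM condition \eqref{ineq:Gamma1} nor the auxiliary bundle $\bar\Gamma_{j-1}$; it relies solely on the Lipschitz regularity of the bundle models and the strong convexity induced by the proximal term.
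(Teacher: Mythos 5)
Your proposal is correct and follows essentially the same route as the paper's proof: define the two $\lam^{-1}$-strongly convex objectives $F_1$ and $F_2$, note that their difference $\Gamma_{j-1}-\Gamma_j$ is $2M$-Lipschitz, and apply Lemma~\ref{lem:dist}. Your explicit check that the base case $\Gamma_1=\ell_f(\cdot;x_0)$ is $M$-Lipschitz via (A2) is a small detail the paper leaves implicit (its hypothesis only asserts Lipschitz continuity of $\Gamma_{j+1}$ for $j\ge 1$), and is a welcome addition.
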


\begin{proof}
For any $j\ge 2$, we consider two functions 
\[
    F_1(u):=\Gamma_{j-1}(u) + h(u) +\frac{1}{2\lam}\|u- x_0 \|^2, \quad F_2(u):=\Gamma_j(u) + h(u) +\frac{1}{2\lam}\|u- x_0\|^2.
\]
It is clear that they are both $\lam^{-1}$-strongly convex.
Moreover, it follows from the assumption that both $\Gamma_{j-1}$ and $\Gamma_j$ are $M$-Lipschitz continuous that $G:=F_1-F_2$ is $2M$-Lipschitz continuous. Indeed, we first observe that
\[
G(x)=F_1(x)-F_2(x) = \Gamma_{j-1}(x)-\Gamma_j(x),
\]
and hence have
\begin{align*}
    |G(x)-G(y)|
    &=|\Gamma_{j-1}(x)-\Gamma_j(x) - [\Gamma_{j-1}(y)-\Gamma_j(y)]|\\
    &\le |\Gamma_{j-1}(x)-\Gamma_{j-1}(y)| + |\Gamma_j(x)-\Gamma_j(y)|
    \le 2M\|x-y\|.
\end{align*}
Hence, $F_1$ and $F_2$ satisfy the assumptions in Lemma \ref{lem:dist} with $\mu=\lam^{-1}$ and $L=2M$.
Since $x_{j-1}=\argmin_{u\in \R^n} F_1(u)$ and $x_j=\argmin_{u\in \R^n} F_2(u)$ by \eqref{eq:xj},
the conclusion immediately follows from Lemma \ref{lem:dist}.
\end{proof}

The following result provides a tighter bound than the one in Proposition \ref{prop:inner}, and therefore will lead to improved bounds in Proposition~\ref{prop:spp-inner}. 

\begin{proposition}\label{prop:tj-improved}
    For every $j\ge 2$, we define
    \begin{equation}\label{def:hx}
        \hat x_j = \begin{cases}    x_2, & \text{if} ~ j=2;\\
        \frac{3 x_2 + \sum_{i=3}^j i x_i}{A_j}, & \text{otherwise,}
        \end{cases}
    \end{equation}
    where $A_j=j(j+1)/2$, and
    \begin{equation}\label{def:ht}
        \hat t_j = \phi^\lam(\hat x_j) - m_j,
    \end{equation}
    where $m_j$ is as in \eqref{eq:mj} and $\phi^\lam$ is as in \eqref{eq:phi2}.
    Then, we have for every $j \ge 2$,
    \begin{equation}\label{ineq:ht}
        \hat t_j \le \frac{16 \lam M^2}{j+1}.
    \end{equation}
\end{proposition}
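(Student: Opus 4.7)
The plan is to mimic the recursive argument used in the proof of Proposition~\ref{prop:inner}, but to start the summation at index $j=2$ rather than $j=1$. The point of this shift is to eliminate the $A_1 t_1 = t_1$ term that spoils the bound in the general setting: in Proposition~\ref{prop:inner} this term encodes the initial primal-dual gap and scales like $M d_0$ (cf.\ Lemma~\ref{lem:t1}), which is exactly the source of the suboptimal dependence on $d_0$ in Theorem~\ref{thm:pb-spp}. The stronger bundle condition \eqref{ineq:Gamma1} (together with the Lipschitz assumption on $\Gamma_j$) is what allows us to absorb this initial term into an absolute $O(\lam M^2)$ quantity, independent of $d_0$.

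\textbf{Step 1 (one-step recursion for $j\ge 2$).} Since \eqref{ineq:Gamma1} implies $\Gamma_{j+1} \ge \bar \Gamma_j$, Lemma~\ref{lem:Gamma} yields
\[
\Gamma_{j+1}(u)+h^\lam(u) \ge m_j + \frac{1}{2\lam}\|u-x_j\|^2 \quad \text{for all } u \in \R^n.
\]
Combining this with the other consequence of \eqref{ineq:Gamma1}, namely $\Gamma_{j+1}(u)+h^\lam(u) \ge \ell_f(u;x_j)+h^\lam(u)$, using weights $A_j/A_{j+1}$ and $(j+1)/A_{j+1}$, evaluating at $u=x_{j+1}$, and then applying \eqref{ineq:f} and Young's inequality exactly as in the proof of Proposition~\ref{prop:inner}, I would derive
\[
A_{j+1} m_{j+1} \ge A_j m_j + (j+1)\phi^\lam(x_{j+1}) - \frac{2\lam M^2(j+1)^2}{A_j}.
\]
For $j\ge 2$ the error term is bounded by an absolute constant multiple of $\lam M^2$, since $A_j = j(j+1)/2$ gives $(j+1)/j \le 3/2$.

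\textbf{Step 2 (base case for $m_2$).} At $j=1$, condition \eqref{ineq:Gamma1} specializes to $\Gamma_2 \ge \ell_f(\cdot;x_1)$, so
\[
m_2 \ge \ell_f(x_2;x_1) + h^\lam(x_2) \ge \phi^\lam(x_2) - 2M\|x_2-x_1\|
\]
by \eqref{ineq:f}. The crucial ingredient is Lemma~\ref{lem:xj}, which gives $\|x_2-x_1\|\le 2\lam M$ without any appeal to the distance from $x_0$ to $X_*$. Thus $m_2 \ge \phi^\lam(x_2) - 4\lam M^2$, and multiplying by $A_2=3$ yields $A_2 m_2 \ge 3\phi^\lam(x_2) - 12\lam M^2$.

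\textbf{Step 3 (telescoping and convex combination).} Telescoping the Step~1 recursion from $j=2$ to $j-1$ and plugging in the Step~2 bound on $A_2 m_2$ produces
\[
A_j m_j \ge 3\phi^\lam(x_2) + \sum_{i=3}^{j} i\,\phi^\lam(x_i) - C \lam M^2 j
\]
for an absolute constant $C$. By convexity of $\phi^\lam$ and the definition of $\hat x_j$ in \eqref{def:hx}, the sum on the right is at least $A_j\,\phi^\lam(\hat x_j)$. Rearranging and using $A_j = j(j+1)/2$ delivers the bound $\hat t_j \le C'\lam M^2/(j+1)$; tracking constants carefully should give the stated $16$. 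The case $j=2$ is in fact already settled by Step~2 alone, since $\hat x_2 = x_2$ and $4\lam M^2 \le 16\lam M^2/3$.

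\textbf{Main obstacle.} The delicate point is Step~2: under GBM with only the weaker condition \eqref{def:Gammaj}, one cannot guarantee that $\Gamma_2$ dominates $\ell_f(\cdot;x_1)$ because the parameter $\tau_1$ may equal $1$ (this is exactly what happens in the one-cut scheme \eqref{eq:one-cut}, which the proposition explicitly excludes). The strengthened inclusion \eqref{ineq:Gamma1}, together with the $M$-Lipschitz hypothesis needed to invoke Lemma~\ref{lem:xj}, is precisely what makes the base case independent of $t_1$, and thereby converts the $O(1/j)$ bound of Proposition~\ref{prop:inner} into the fully accelerated $O(\lam M^2/j)$ bound claimed in \eqref{ineq:ht}.
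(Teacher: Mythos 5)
Your proposal is correct and follows essentially the same route as the paper: bound the base case $\hat t_2=\phi^\lam(x_2)-m_2\le 2M\|x_2-x_1\|\le 4\lam M^2$ using $\Gamma_2\ge\ell_f(\cdot;x_1)$ from \eqref{ineq:Gamma1} together with Lemma~\ref{lem:xj}, then telescope the recursion of Proposition~\ref{prop:inner} starting from index $2$ and apply convexity of $\phi^\lam$ with the weights defining $\hat x_j$. The constants do close as claimed ($A_j\hat t_j\le 3\cdot 4\lam M^2+8\lam M^2(j-2)\le 8\lam M^2 j$ gives exactly $16\lam M^2/(j+1)$), and your identification of the role of \eqref{ineq:Gamma1} and the Lipschitz hypothesis matches the paper's reasoning.
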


\begin{proof}
Using the definitions of $t_j$ and $m_j$ in \eqref{eq:mj} and the inequality in \eqref{ineq:txj} with $j=1$, we have
	\[
	t_2
	\stackrel{\eqref{eq:mj}}=\phi^\lam(\tx_2)-m_2
	\stackrel{\eqref{ineq:txj},\eqref{eq:mj}}\le \phi^\lam(x_2)- \left[(\Gamma_2+ h)(x_2) +\frac{1}{2 \lam}\|x_2-x_0\|^2\right]
	\stackrel{\eqref{eq:phi2}}=  f(x_2)-\Gamma_2(x_2),
	\]
    where the last identity is due to the definition of $\phi^\lam$ in \eqref{eq:phi2}.
	It follows from \eqref{ineq:Gamma1} and the definition of $\ell_f$ in \eqref{linfdef} that
	\[
	\Gamma_2(\cdot) \stackrel{\eqref{ineq:Gamma1}}\ge \ell_f(\cdot;x_1) \stackrel{\eqref{linfdef}}= f(x_1) + \inner{f'(x_1)}{\cdot-x_1}.
	\]
	Combining the above two inequalities, we obtain
	\begin{align*}
		t_2&\le f(x_2) - [ f(x_1) + \inner{f'(x_1)}{x_2 - x_1}] \\
		&\le |f(x_2)-f(x_1)|+\|f'(x_1)\| \|x_2-x_1\| \le 2M \|x_2-x_1\|,
	\end{align*}
	where the second inequality is due to the triangle and the Cauchy-Schwarz inequalities, and the last inequality follows from assumption (A2).
    Hence, it follows from Lemma \ref{lem:xj} that $t_2\le 4 \lam M^2$.
    Since \eqref{ineq:Gamma1} implies \eqref{def:Gammaj}, following an argument similar to the proof of Proposition \ref{prop:inner}, we have
        \begin{align}
        A_j m_j
        &\ge A_2 m_2 + 3 \phi^\lam(x_3) + \dots + j\phi^\lam(x_j) - 8\lam M^2 (j-2) \nn \\
        &\stackrel{\eqref{eq:mj}}{=} -A_2 t_2 + A_2 \phi^\lam(x_2) + 3 \phi^\lam(x_3) + \dots + j\phi^\lam(x_j) - 8\lam M^2 (j-2) \nn \\
        &\stackrel{\eqref{def:hx}}{\ge} -A_2 t_2 + A_j \phi^\lam(\hat x_j) - 8\lam M^2 (j-2), \label{ineq:A-new}
        \end{align}
    where the last inequality is due to the convexity of $\phi^\lam$ and the definition of $\hat x_j$ in \eqref{def:hx}.
    Using the definition of $\hat t_j$ in \eqref{def:ht} and the facts that $t_2\le 4\lam M^2$ and $A_j=j(j+1)/2$, we obtain
        \[
        A_j \hat t_j \stackrel{\eqref{def:ht}}= A_j(\phi^\lam(\hat x_j) - m_j) \stackrel{\eqref{ineq:A-new}}\le A_2 t_2 + 8\lam M^2 (j-2) \le 12 \lam M^2 + 8\lam M^2 j - 16\lam M^2 \le 8\lam M^2 j.
        \]
    Therefore, inequality \eqref{ineq:ht} immediately follows.
\end{proof}

Proposition~\ref{prop:tj-improved} is the key result needed to derive an improved version of Proposition~\ref{prop:spp-inner}. The remaining steps follow similarly by formally redefining the relevant quantities using $\hat x_j$ (defined in \eqref{def:hx}) in place of $\tx_j$. To avoid introducing additional notation and repeating arguments, we directly state the resulting bounds:
\begin{equation}\label{ineq:hat-tj}
    \hat t_k^x\le \frac{16\lam_k M^2}{l_k+1}, \quad \hat t_k^y\le \frac{16\lam_k M^2}{l_k+1},
\end{equation}
where $\lam_k$ and $l_k$ are as in Proposition~\ref{prop:spp-inner}, and $\hat t_k^x$ and $\hat t_k^y$ are the counterparts of $t_k^x$ and $t_k^y$ used in Proposition~\ref{prop:spp-inner}, but with $\tx_k$ and $\ty_k$ replaced by $\hat x_k$ and $\hat y_k$ in their definition \eqref{def:tj-xy}.

By making an assumption analogous to \eqref{ineq:spp-accuracy}, namely,
\begin{equation}\label{ineq:spp-accuracy1}
    \hat t_k^x \le \frac{\bar \varepsilon}4, \quad
    \hat t_k^y \le \frac{\bar \varepsilon}4,
\end{equation}
we are able to reproduce similar versions of Lemma \ref{lem:spp-outer}, Lemma \ref{lem:spp-outer-comb}, and Proposition~\ref{prop:pb-spp}.
We are now ready to establish the improved iteration-complexity ${\cal O}((MD/\bar \varepsilon)^2)$ for PB-SPP to find a $\bar \varepsilon$-saddle-point.

\begin{theorem}\label{thm:pb-spp1}
Given $(x_0,y_0,\bar \varepsilon) \in \dom h_1 \times \dom h_2 \times R_{++}$, letting $\lam_1 = D/(4M)$, then the iteration-complexity for PB-SPP$(x_0,y_0)$ to find a $\bar \varepsilon$-saddle-point of \eqref{eq:spp} is ${\cal O}((MD/\bar \varepsilon)^2)$.
\end{theorem}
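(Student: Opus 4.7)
The plan is to mirror the proof of Theorem 3.2, but feed the strictly sharper inner bound \eqref{ineq:hat-tj} into the accounting of total work. The external (outer) analysis is unchanged in form: since the statements analogous to Lemma \ref{lem:spp-outer}, Lemma \ref{lem:spp-outer-comb}, and Proposition~\ref{prop:pb-spp} are obtained by simply replacing $(\tx_k,\ty_k)$ with $(\hat x_k,\hat y_k)$ throughout, invoking the counterpart of Proposition~\ref{prop:pb-spp} with $\lam_1 = D/(4M)$ yields
\[
\varphi(\bar x_k) - \psi(\bar y_k) \le \frac{\bar \varepsilon}{2} + \frac{4MD}{\sqrt{k}},
\]
so that $k = {\cal O}(M^2 D^2/\bar \varepsilon^2)$ outer cycles suffice to certify an $\bar \varepsilon$-saddle-point.

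The improvement comes in the inner count. Under the strengthened condition \eqref{ineq:Gamma1}, inequality \eqref{ineq:hat-tj} drops the ${\cal O}(MD/(l_k(l_k+1)))$ term that appeared in Proposition~\ref{prop:spp-inner}. Hence, to meet the per-cycle accuracy \eqref{ineq:spp-accuracy1}, it suffices to choose
\[
l_i = {\cal O}\!\left(\frac{\lam_i M^2}{\bar \varepsilon}\right) = {\cal O}\!\left(\frac{MD}{\bar \varepsilon\sqrt{i}}\right),
\]
using $\lam_i = \lam_1/\sqrt{i}$ and $\lam_1 = D/(4M)$. Summing the cycle lengths with the familiar bound $\sum_{i=1}^k 1/\sqrt{i} \le 2\sqrt{k}$ gives
\[
\sum_{i=1}^k l_i = {\cal O}\!\left(\frac{MD}{\bar \varepsilon}\right)\cdot \sqrt{k} = {\cal O}\!\left(\frac{MD}{\bar \varepsilon}\right)\cdot {\cal O}\!\left(\frac{MD}{\bar \varepsilon}\right) = {\cal O}\!\left(\frac{M^2 D^2}{\bar \varepsilon^2}\right),
\]
which is the claimed optimal bound.

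The routine part is verifying the counterparts of Lemmas \ref{lem:spp-outer}, \ref{lem:spp-outer-comb} and Proposition~\ref{prop:pb-spp} with $(\hat x_k,\hat y_k)$ in place of $(\tx_k,\ty_k)$; these go through without change because only the scalar bound on the per-cycle gap is used, and \eqref{ineq:spp-accuracy1} replaces \eqref{ineq:spp-accuracy} with the same quantitative content. The main obstacle, essentially already settled by Proposition~\ref{prop:tj-improved}, is justifying that this sharpened gap still controls the primal-dual error of the decoupled proximal subproblems \eqref{eq:spp-dcp-x}--\eqref{eq:spp-dcp-y}; this rests on the stronger cut condition \eqref{ineq:Gamma1} (satisfied by the two-cuts and multi-cuts schemes but not the one-cut scheme) together with the $M$-Lipschitz continuity of $\Gamma_j$, which together remove the initial $t_1$-type term that forced the extra $\sqrt{MD/\bar \varepsilon}$ factor in Theorem~\ref{thm:pb-spp}. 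With that in hand, the final accounting is a direct rerun of the proof of Theorem~\ref{thm:pb-spp}, merely with the inner-complexity term trimmed.
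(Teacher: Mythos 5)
Your proposal is correct and follows essentially the same route as the paper's own proof: rerun the outer analysis of Theorem~\ref{thm:pb-spp} with $(\hat x_k,\hat y_k)$ in place of $(\tx_k,\ty_k)$ to get $k={\cal O}(M^2D^2/\bar\varepsilon^2)$ cycles, then use \eqref{ineq:hat-tj} to shrink the per-cycle length to $l_i={\cal O}(MD/(\bar\varepsilon\sqrt{i}))$ and sum. The accounting and the role of the strengthened condition \eqref{ineq:Gamma1} match the paper exactly.
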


\begin{proof}
Following an argument analogous to the proof of Theorem \ref{thm:pb-spp}, we can show that PB-SPP takes $k=64M^2D^2/\bar \varepsilon^2$ iterations to find the $\bar \varepsilon$-saddle-point.
Using \eqref{ineq:hat-tj}, we know to have \eqref{ineq:spp-accuracy1} holds for every cycle ${\cal C}_i$, it is sufficient to have
\[
l_i = \frac{64\lam_i M^2}{\bar \varepsilon} = \frac{16M D}{\bar \varepsilon \sqrt{i}},
\]
where the second identity is due to the facts that $\lam_i=\lam_1/\sqrt{i}$ and $\lam_1 = D/(4M)$.
As a consequence, the total number of iterations  is
\[
\sum_{i=1}^k l_i = \sum_{i=1}^k \frac{16M D}{\bar \varepsilon \sqrt{i}} \le \frac{256 M^2 D^2}{\bar \varepsilon^2},
\]
where we use the facts that$\sum_{i=1}^k (1/\sqrt{i}) \le \int_0^k (1/\sqrt{x}) dx = 2 \sqrt{k}$ and $k=64M^2D^2/\bar \varepsilon^2$.
\end{proof}

\section{Numerical experiments}\label{sec:numerics}

We consider the following regularized matrix game
\begin{equation}\label{def:linf_reg}
    \min_{x\in\Delta_n}\max_{y\in\Delta_m}\{y^\top Ax + \gamma_x\infnorm{x}-\gamma_y\infnorm{y}\},
\end{equation}
where $A\in\R^{m\times n}$ is the payoff matrix, $x$ and $y$ are mixed strategies on unit simplices $\Delta_n$ and $\Delta_m$, respectively. 
The $\ell_\infty$ regularization terms with parameters $\gamma_x \ge 0$ and $\gamma_y \ge 0$ discourage overly concentrated strategies by penalizing large coordinates, thereby promoting robustness.
Note that \eqref{def:linf_reg} is in the form of SPP \eqref{eq:spp} with
\begin{equation}\label{eq:fh}
    f(x,y)=y^\top Ax + \gamma_x\infnorm{x}-\gamma_y\infnorm{y}, \quad h_1(x) = I_{\Delta_n}(x), \quad h_2(y) = I_{\Delta_m}(y),
\end{equation}
where $I_{\Delta_n}$ and $I_{\Delta_m}$ are the indicator functions of unit simplicies $\Delta_n$ and $\Delta_m$, respectively.

The subgradient $f'_x$ and the supergradient $f'_y$ are given by
\begin{equation}
    f'_x(u,v)=A^\top v+\gamma_x g_u, \quad f'_y(u,v)=Au-\gamma_y g_v
\end{equation}
where $g_u\in\partial \|u\|_\infty$ and $g_v\in\partial \|v\|_\infty$. It follows from Example 3.52 of \cite{Beck2017} that the subdifferential of $\infnorm{\cdot}$ takes the form of
\begin{equation}\label{eq:infnorm}
    \partial \infnorm{x}=\left\{\sum_{j\in\mathcal{I}(x)}\lambda_je_j:\lambda\in\Delta^n, \sum_{j\not\in\mathcal{I}(x)}\lambda_i=0\right\},
\end{equation}
where $e_j$ is the $j$-th unit vector and the index set $\mathcal{I}(x)=\{j:|x_j|=\infnorm{x}\}$.
In our implementation, we fix $g_u=\sum_{j\in\mathcal{I}(u)}\lambda_je_j$ with $\lam_j=1/|I(u)|$ and $g_v=\sum_{j\in\mathcal{I}(v)}\lambda_je_j$ with $\lam_j=1/|I(v)|$.
We also note that
\[  M_x=\sup_{u\in\Delta_n,v\in\Delta_m}\|f'_x(u,v)\|=\sup_{u\in\Delta_n,v\in\Delta_m}\{\|A^\top v\| + \gamma_x\|g_u\|\} \leq \max_{1\leq j\leq m}\|A^\top_{j}\| +\gamma_x
\]
and 
\[
M_y=\sup_{u\in\Delta_n,v\in\Delta_m}\|f'_y(u,v)\|=\sup_{u\in\Delta_n,v\in\Delta_m}\{\|Au\| + \gamma_y\|g_v\|\} \leq \max_{1\leq i\leq n}\|A_{i}\| +\gamma_y,
\]
where $A_j^\top$ (resp., $A_i$) denotes the $j$-th (resp., $i$-th) column of $A^\top$ (resp., $A$). 
Indeed, the above inequalities follow from \eqref{eq:infnorm}, that is
\[
\|g_u\|^2 = \sum_{i\in I(x)} \lam_i^2 \le \sum_{i\in I(x)} \lam_i \le 1,
\]
and similarly $\|g_v\|\le 1$. Clearly, taking $M= \max\{M_x,M_y\}$ satisfies \eqref{ineq:spp-Lips}.

In the regularized matrix game \eqref{def:linf_reg},
we set $m=n=100$ and $\gamma_x=\gamma_y=0.05$, and generate the payoff matrix $A$ of $5\%$ density with nonzero entries sampled from $\mathcal{N}(0,1)$. 
We compare four numerical methods on \eqref{def:linf_reg}: CS-SPP (i.e., \eqref{eq:spp-xk}-\eqref{eq:spp-yk}), and three variants of PB-SPP, where the bundle model $\Gamma_k^x$ (resp., $-\Gamma_k^y$) in \eqref{eq:xk-spp} (resp., \eqref{eq:yk-spp}) is generated by the one-cut scheme \eqref{eq:one-cut}, the two-cuts scheme \eqref{eq:tc}, and the multiple-cuts scheme \eqref{eq:mc}, respectively.
For the two-cuts scheme, the Lagrange multiplier $\theta_{j-1}$ in \eqref{eq:tc-bar} is obtained via a bisection search for an auxiliary problem, while in the multiple-cuts scheme, $\theta_j^i$ in \eqref{eq:zjtheta} is computed by solving an auxiliary problem using FISTA. All methods are implemented in Julia. Proximal mappings for $h_1$ and $h_2$ in \eqref{eq:fh} are evaluated using the ProximalOperators.jl package, and the FISTA routine for the multiple-cuts scheme is taken from the ProximalAlgorithms.jl package.

We set $x_0 = (1/n, \dots, 1/n)^\top \in \R^n$ and $y_0 = (1/m, \dots, 1/m)^\top \in \R^m$ and use $(x_0,y_0)$ as the initial pair for each method.
We set tolerance $\bar \varepsilon=10^{-4}$, the static stepsize $\lam = \bar \varepsilon/(32M^2)$ for CS-SPP, and the dynamic stepsize $\lam_k = D/(4M \sqrt{k})$ with $D=2$ for $k \ge 1$, which is used by all three variants of PB-SPP.
All numerical methods in the benchmark are terminated once a $\bar \varepsilon$-saddle-point, as defined in \eqref{def:eps-spp-equiv}, is obtained. From the definitions of $\varphi$ and $\psi$ in \eqref{def:spp-func}, it follows that for each $x\in \Delta_n$ and $y\in \Delta_m$, 
\begin{equation}\label{eq:varphi}
    \varphi(x) = \gamma_x\infnorm{x} + \max _{y \in \Delta_m} \left\{y^\top Ax -\gamma_y\infnorm{y}\right\}, \,
    \psi(y) = -\gamma_y\infnorm{y} + \min _{x \in \Delta_n} \left\{y^\top Ax + \gamma_x\infnorm{x}\right\}.
\end{equation}
Evaluating $\varphi$ or $\psi$ requires an exact solution to a generic optimization problem of the form
\begin{equation}\label{def:fz}
    \min_{x \in \Delta_n} \left\{f_z(x)=z^\top x + \gamma\infnorm{x}\right\}.
\end{equation}
Algorithm~\ref{alg:Sj} in Appendix~\ref{sec:detail} provides a numerical scheme for the exact solution to this problem.

We track the primal-dual gap along with the elapsed time, the total number of proximal evaluations,  and the number of outer iterations. CS-SPP logs this information every 1000 iterations (since the iterations are both much more numerous and much faster), while PB-SPP logs every 10 iterations. Numerical tests are conducted on an i9-13900k desktop with 64 GB of RAM. 

\begin{figure}[H]
    \centering
    \includegraphics[width=\linewidth]{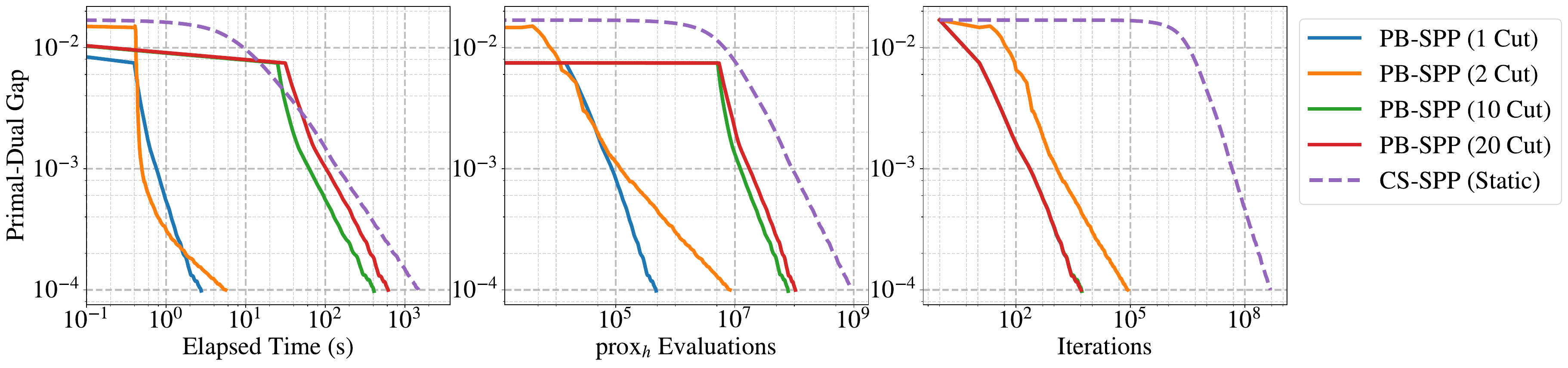}
    \caption{Comparison between CS-SPP and PB-SPP with one-cut, two-cuts, and multi-cuts schemes for solving~\eqref{def:linf_reg}.}
    \label{fig:SPP}
\end{figure}

Figure \ref{fig:SPP} compares five methods for solving \eqref{def:linf_reg}: CS-SPP with a static stepsize of $\bar{\varepsilon}/(32M^2)$, and PB-SPP with a dynamic stepsize of $1/(2M\sqrt{k})$ under one-cut, two-cuts, 10-cuts, and 20-cuts schemes.
Among these, the one-cut and two-cut PB-SPP schemes are the most efficient in terms of elapsed time. The multi-cut schemes with 10 or 20 cuts show nearly identical performance across all metrics: elapsed time, number of proximal evaluations, and iteration counts. Regarding the total number of (PDCP) iterations, the two-cuts scheme requires the fewest iterations, followed by the multi-cuts schemes, and finally the one-cut scheme.

\section{Concluding remarks} \label{sec:conclusion}

This paper studies the iteration-complexity of modern PB methods for solving CNCO \eqref{eq:problem} and SPP \eqref{eq:spp}. It proposes PDPB for solving \eqref{eq:problem} and provides the iteration-complexity of PDPB in terms of a primal-dual gap. The paper also introduces PB-SPP for solving \eqref{eq:spp} and establishes the iteration-complexity to find a $\bar \varepsilon$-saddle-point. 
Another interesting feature of the paper is that it investigates the duality between CG and PDCP for solving the proximal subproblem \eqref{eq:phi-lam}. The paper further develops novel variants of both CG and PDCP leveraging the duality.

We finally discuss some possible extensions of our methods and analyses.
First, we have studied modern PB methods for solving CNCO and SPP in this paper, and we could extend the methods to solving more general nonsmooth problems with convex structures such constrained optimization, equilibrium problems, and variational inequalities.
Second, it is interesting to study the duality between PDCP and CG in the context of SPP, which is equivalent to developing a CG method to implement \eqref{incl} and \eqref{ineq:IPP} within IPPF.
Third, similar to the universal methods proposed in \cite{guigues2024universal}, we are also interested in developing universal variants of PB-SPP for SPP \eqref{eq:spp} under strong convexity assumptions without knowing the problem-dependent parameters a priori.
Finally, following the stochastic PB method developed for stochastic CNCO in \cite{liang2024single}, it is worthwhile to explore stochastic versions of PB-SPP for solving stochastic SPP, particularly those involving decision-dependent distributions.

\bibliographystyle{plain}
\bibliography{ref}

\appendix
\section{Technical results and deferred proofs}\label{sec:technical}

This section collects technical results used throughout the paper and deferred proofs from Section~\ref{sec:PB-SPP}.

\subsection{Technical results}

We present Lemma 13.7 of \cite{Beck2017} with slight modification, which is used in the proof of Lemma~\ref{lem:FW-basic}.

\begin{lemma}\label{lem:Beck-FW}
Consider
\begin{equation}\label{eq:Beck}
    \min_{x \in \R^n} \{F(x) = f(x) + g(x)\},
\end{equation}
where $f \in \bConv{n}$, $g \in \bConv{n}$, and $\dom g \subset \dom f$.
Moreover, $f$ is $L_f$-smooth over $\dom f$.
Define
\[
S(x)=\max _{p \in \R^n}\{\inner{\nabla f(x)}{x-p}+g(x)-g(p)\}, \quad p(x) = \underset{p\in \R^n}\argmin \{\inner{p}{\nabla f(x)} + g(p)\}.
\]
Then, for every $x \in \dom g$ and $t \in [0, 1]$, if $p(x)$ exists, we have
\begin{equation}\label{ineq:Beck-basic}
    F(x+t(p(x) -x)) \le F(x)-t S(x)+\frac{t^2 L_f}{2}\|p(x)-x\|^2.
\end{equation}
\end{lemma}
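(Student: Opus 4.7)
The plan is to prove the one-step descent inequality by treating the two components $f$ and $g$ of $F$ separately and then summing. The smooth part $f$ yields a quadratic upper bound via the descent lemma, while the nonsmooth part $g$ is controlled via convexity along the segment joining $x$ to $p(x)$; the cross term that appears will be recognized as $-S(x)$ by the definition of $p(x)$.

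First, I would apply the standard $L_f$-smoothness descent lemma on $f$ at the point $x + t(p(x)-x)$. Since $x \in \dom g \subset \dom f$ and $p(x) \in \dom g \subset \dom f$ (because $p(x)$ is a minimizer of a function whose domain is $\dom g$), and $\dom f$ is convex, the intermediate point lies in $\dom f$. This yields
\[
f(x+t(p(x)-x)) \le f(x) + t\inner{\nabla f(x)}{p(x)-x} + \frac{t^2 L_f}{2}\|p(x)-x\|^2.
\]

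Next, I would handle $g$ using convexity. Writing $x + t(p(x)-x) = (1-t)x + t p(x)$ as a convex combination (valid since $t \in [0,1]$), convexity of $g$ gives
\[
g(x+t(p(x)-x)) \le (1-t)g(x) + t g(p(x)) = g(x) + t[g(p(x)) - g(x)].
\]
Adding the two displayed inequalities produces
\[
F(x+t(p(x)-x)) \le F(x) + t\bigl[\inner{\nabla f(x)}{p(x)-x} + g(p(x)) - g(x)\bigr] + \frac{t^2 L_f}{2}\|p(x)-x\|^2.
\]

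The final step is to identify the bracketed expression as $-S(x)$. By definition, $p(x) \in \argmin_{p}\{\inner{p}{\nabla f(x)} + g(p)\}$, so for every $p \in \R^n$,
\[
\inner{p(x)}{\nabla f(x)} + g(p(x)) \le \inner{p}{\nabla f(x)} + g(p),
\]
which rearranges to $\inner{\nabla f(x)}{x - p} + g(x) - g(p) \le \inner{\nabla f(x)}{x - p(x)} + g(x) - g(p(x))$ for every $p$. Taking the maximum over $p \in \R^n$ on the left gives
\[
S(x) = \inner{\nabla f(x)}{x - p(x)} + g(x) - g(p(x)) = -\bigl[\inner{\nabla f(x)}{p(x)-x} + g(p(x)) - g(x)\bigr].
\]
Substituting this into the previous display yields \eqref{ineq:Beck-basic}. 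There is no real obstacle here; the only subtlety is ensuring the convex combination identity for $g$ and justifying that the supremum defining $S(x)$ is attained precisely at $p(x)$, which is immediate from the optimality characterization of $p(x)$.
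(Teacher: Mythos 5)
Your proof is correct. The paper does not prove this lemma itself but simply cites Lemma 13.7 of Beck's textbook, and your argument (descent lemma for the $L_f$-smooth part, convexity for $g$, and the observation that the maximum defining $S(x)$ is attained at $p(x)$ because maximizing $\inner{\nabla f(x)}{x-p}+g(x)-g(p)$ over $p$ is equivalent to minimizing $\inner{p}{\nabla f(x)}+g(p)$) is precisely the standard proof given there.
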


\begin{lemma}\label{lem:spp-equiv}
    Given $\bar \varepsilon>0$, a pair $(x, y)$ is a $\bar \varepsilon$-saddle-point of \eqref{eq:spp} (i.e., satisfying \eqref{def:eps-spp}) if and only if the pair satisfies \eqref{def:eps-spp-equiv}.
\end{lemma}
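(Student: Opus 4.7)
The plan is to unfold the $\bar\varepsilon$-subdifferential in \eqref{def:eps-spp} and then take appropriate suprema and infima to arrive at the scalar gap \eqref{def:eps-spp-equiv}. To set this up, I would introduce the auxiliary bivariate function $F : \R^n \times \R^m \to \R \cup \{+\infty\}$ defined by $F(u,v) := \phi(u,y) - \phi(x,v)$. Because $\phi(\cdot,y)$ is convex and $\phi(x,\cdot)$ is concave, $F$ is jointly convex in $(u,v)$, and clearly $F(x,y) = 0$, so the $\bar\varepsilon$-subdifferential of $F$ at $(x,y)$ is well defined.

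Applying the definition of the $\bar\varepsilon$-subdifferential in \eqref{def:subdiff} to $F$ at the point $(x,y)$ with subgradient $0$, the inclusion \eqref{def:eps-spp} becomes the statement that $F(u,v) \ge F(x,y) - \bar\varepsilon = -\bar\varepsilon$ for every $(u,v) \in \R^n \times \R^m$, i.e.,
\[
\phi(x,v) - \phi(u,y) \le \bar\varepsilon, \qquad \forall (u,v) \in \R^n \times \R^m.
\]
For the forward direction I would take the supremum of this inequality over $v \in \R^m$ and then the supremum over $-\phi(u,y)$ with respect to $u \in \R^n$ (equivalently, the infimum of $\phi(u,y)$), and invoke the definitions of $\varphi$ and $\psi$ in \eqref{def:spp-func} to conclude $\varphi(x) - \psi(y) \le \bar\varepsilon$, which is \eqref{def:eps-spp-equiv}. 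For the converse, given \eqref{def:eps-spp-equiv}, the bounds $\phi(x,v) \le \varphi(x)$ and $-\phi(u,y) \le -\psi(y)$ coming directly from \eqref{def:spp-func} yield the same pointwise inequality displayed above, which is equivalent to $0 \in \partial_{\bar\varepsilon} F(x,y)$.

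There is no substantive obstacle; the proof is essentially a direct unpacking of definitions. The only minor point to acknowledge is that $\phi(u,y) = +\infty$ for $u \notin \dom h_1$ and $\phi(x,v) = -\infty$ for $v \notin \dom h_2$, so the pointwise bound is trivial off $\dom h_1 \times \dom h_2$, and hence the sup/inf over all of $\R^n$ and $\R^m$ appearing in \eqref{def:spp-func} coincide with the effective sup/inf over the domains; this ensures the forward and reverse passages between the pointwise inequality and the scalar gap are valid without any domain-restriction technicality.
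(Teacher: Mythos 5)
Your proposal is correct and follows essentially the same route as the paper: both unfold the $\bar\varepsilon$-subdifferential at $(x,y)$ with subgradient $0$ to obtain the pointwise inequality $\phi(x,v)-\phi(u,y)\le\bar\varepsilon$, pass to the extrema over $(u,v)$ via \eqref{def:spp-func} for the forward direction, and use the bounds $\phi(x,v)\le\varphi(x)$ and $-\phi(u,y)\le-\psi(y)$ for the converse. The only cosmetic difference is that the paper substitutes specific argmin/argmax points $(x(y),y(x))$ where you take suprema and infima directly, which is an equally valid (and arguably slightly cleaner) way to reach \eqref{def:eps-spp-equiv}.
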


\begin{proof}
    It follows from \eqref{def:eps-spp} that for every $(u,v)\in \dom h_1 \times \dom h_2$,
    \begin{equation}\label{ineq:phi-subdiff}
        \phi(u,y)-\phi(x,v) \ge \phi(x,y)-\phi(x,y) -\bar \varepsilon = -\bar \varepsilon.
    \end{equation}
    Hence, \eqref{ineq:phi-subdiff} holds with $(u,v)=(x(y),y(x))$ where 
    \[
    x(y) = \underset{x \in \R^n}\argmin \phi(x, y), \quad y(x) = \underset{y \in \R^m}\argmax \phi(x, y),
    \]
    that is
    \[
    \underset{x \in \R^n}\min \phi(x, y) - \underset{y \in \R^m}\max \phi(x, y) = \phi(x(y),y)-\phi(x,y(x)) \stackrel{\eqref{ineq:phi-subdiff}}\ge -\bar \varepsilon.
    \]
    This result, together with \eqref{eq:spp-Phi} and \eqref{def:spp-func}, implies that \eqref{def:eps-spp-equiv} holds.
    On the other hand, assuming that \eqref{def:eps-spp-equiv} holds, then for every $(u,v)\in \dom h_1 \times \dom h_2$, it obviously follows from \eqref{def:spp-func} that
    \[
    \phi(x,v)-\phi(u,y) \stackrel{\eqref{def:spp-func}}\le \varphi(x)-\psi(y) \le \bar \varepsilon,
    \]
    which is \eqref{def:eps-spp} in view of \eqref{ineq:phi-subdiff}.
\end{proof}

\begin{lemma}\label{lem:101}
    Given $\bar \varepsilon>0$, a pair $(x, y)$ is a $\bar \varepsilon$-saddle-point of \eqref{eq:spp} (i.e., satisfying \eqref{def:eps-spp}) implies \eqref{def:spp-weak}.
\end{lemma}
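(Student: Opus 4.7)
The plan is to exploit the characterization of the $\bar\varepsilon$-subdifferential condition \eqref{def:eps-spp} in the same way as in the proof of Lemma~\ref{lem:spp-equiv}, namely to first rewrite it as the pointwise inequality
\[
\phi(u,y)-\phi(x,v) \ge -\bar\varepsilon \qquad \forall (u,v)\in\dom h_1\times\dom h_2,
\]
which follows directly from Definition \eqref{def:subdiff} applied to the function $\phi(\cdot,y)-\phi(x,\cdot)$ at the point $(x,y)$, using that this function vanishes at $(x,y)$ and that $0$ is an $\bar\varepsilon$-subgradient. This is exactly \eqref{ineq:phi-subdiff}, which the preceding proof already established; I would therefore just cite it.

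The idea is then to pair this inequality with the saddle-point inequality \eqref{def:spp} in two complementary ways. For the upper bound $\phi(x,y)-\phi(x_*,y_*)\le\bar\varepsilon$, I would specialize $(u,v)=(x_*,y)$ to get $\phi(x,y)-\phi(x_*,y)\le\bar\varepsilon$, and then invoke $\phi(x_*,y)\le\phi(x_*,y_*)$, which is the left inequality in the saddle-point definition \eqref{def:spp} with $v=y$. For the lower bound $\phi(x,y)-\phi(x_*,y_*)\ge -\bar\varepsilon$, I would specialize $(u,v)=(x,y_*)$ in \eqref{ineq:phi-subdiff} to get $\phi(x,y)-\phi(x,y_*)\ge -\bar\varepsilon$, and then use $\phi(x_*,y_*)\le\phi(x,y_*)$, the right inequality in \eqref{def:spp} with $u=x$, to replace $\phi(x,y_*)$ with the larger quantity $\phi(x,y_*)\ge\phi(x_*,y_*)$ on the appropriate side.

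Combining the two chains gives both sides of \eqref{def:spp-weak}. There is no substantive obstacle here: the argument is a short selection of test points in the $\bar\varepsilon$-subgradient inequality combined with the saddle-point inequalities, and is really just a weaker quantitative version of what was done in Lemma~\ref{lem:spp-equiv}. The only minor care needed is to ensure that the test points $(x_*,y)$ and $(x,y_*)$ lie in $\dom h_1\times\dom h_2$, which is immediate because $(x,y)$ and $(x_*,y_*)$ both belong to this set by assumption (B5) and the hypothesis of \eqref{def:eps-spp}.
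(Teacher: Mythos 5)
Your proof is correct and follows essentially the same route as the paper: both reduce \eqref{def:eps-spp} to the pointwise inequality $\phi(x,v)-\phi(u,y)\le\bar\varepsilon$ for all $(u,v)\in\dom h_1\times\dom h_2$ (you via \eqref{ineq:phi-subdiff} directly from the $\bar\varepsilon$-subdifferential definition, the paper via Lemma~\ref{lem:spp-equiv} and \eqref{def:spp-func}, which yield the same bound), and then test it at $(x_*,y)$ and $(x,y_*)$ combined with the two halves of \eqref{def:spp}. No issues.
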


\begin{proof}
Assuming  that $(x, y)$ is a $\bar \varepsilon$-saddle-point, it follows from Lemma \ref{lem:spp-equiv} that \eqref{def:eps-spp-equiv} holds, and hence that for every $(u,v)\in \dom h_1 \times \dom h_2$,
\begin{equation}\label{ineq:phi-symm}
    \phi(x,v)-\phi(u,y) \stackrel{\eqref{def:spp-func}}\le \varphi(x)-\psi(y) \le \bar \varepsilon,
\end{equation}
where the first inequality is due to \eqref{def:spp-func}.
Taking $(u,v)=(x_*,y)$ in \eqref{ineq:phi-symm} and using the first inequality in \eqref{def:spp}, we have
\[
\phi(x,y) - \phi(x_*,y_*) \stackrel{\eqref{def:spp}}\le \phi(x,y)-\phi(x_*,y) \stackrel{\eqref{ineq:phi-symm}}\le \bar \varepsilon.
\]
Taking $(u,v)=(x,y_*)$ in \eqref{ineq:phi-symm} and using the second inequality in \eqref{def:spp}, we have
\[
\phi(x_*,y_*) - \phi(x,y) \stackrel{\eqref{def:spp}}\le \phi(x,y_*) - \phi(x,y) \stackrel{\eqref{ineq:phi-symm}}\le \bar \varepsilon.
\]
Therefore, \eqref{def:spp-weak} immediately follows from the above two inequalities.
\end{proof}

\subsection{Proof of Proposition \ref{prop:CS-SPP-IPPF}} \label{subsec:pf-CS-IPPF}

\begin{proof}
We first show that CS-SPP satisfies \eqref{incl}.
It follows from the CS-SPP iterate \eqref{eq:spp-xk} that
\[
\frac{x_{k-1}-x_k}{\lam} \in \partial [\ell_{f(\cdot,y_{k-1})}(\cdot;x_{k-1})+h_1](x_k).
\]
Using the inclusion above, we have for every $u\in \dom h_1$,
\[
[\ell_{f(\cdot,y_{k-1})}(\cdot;x_{k-1})+h_1](u) \ge [\ell_{f(\cdot,y_{k-1})}(\cdot;x_{k-1})+h_1](x_k) + \frac{1}{\lam}\inner{x_{k-1}-x_k}{u-x_k}.
\]
Using the definition of $p_k$ in \eqref{def:p-d-k} and the fact that $f(\cdot,y_{k-1})$ is convex, we further obtain
\[
    p_k(u) \ge p_k(x_k) + \frac{1}{\lam}\inner{x_{k-1}-x_k}{u-x_k} - \varepsilon_k^x,
\]
where $\varepsilon_k^x$ is as in \eqref{def:ekx}.
Similarly, we have for every $v\in \dom h_2$,
\[
    d_k(v) \ge d_k(y_k) + \frac{1}{\lam}\inner{y_{k-1}-y_k}{v-y_k} - \varepsilon_k^y,
\]
where $\varepsilon_k^y$ is as in \eqref{def:eky}. 
Summing the above two inequalities gives \eqref{ineq:target} with $\lam_k=\lam$, $\varepsilon_k = \varepsilon_k^x + \varepsilon_k^y$ and $(\tx_k,\ty_k) = (x_k,y_k)$, and hence \eqref{incl} holds in view of Lemma \ref{lem:pd-equiv}.

We next show that CS-SPP satisfies \eqref{ineq:IPP}.
Indeed, it follows from the definition of $\varepsilon_k^x$ in \eqref{def:ekx} and the first inequality in \eqref{ineq:Lips-spp} that
\begin{align*}
    2\lam \varepsilon_k^x - \|x_k-x_{k-1}\|^2
    &\stackrel{\eqref{def:ekx}}= 2\lam [f(x_k,y_{k-1}) - \ell_{f(\cdot,y_{k-1})}(x_k;x_{k-1})] - \|x_k-x_{k-1}\|^2\\
    &\stackrel{\eqref{ineq:Lips-spp}}\le 4 \lam M \|x_k-x_{k-1}\| - \|x_k-x_{k-1}\|^2 \le 4\lam^2 M^2.
\end{align*}
Similarly, we have $2\lam \varepsilon_k^y - \|y_k-y_{k-1}\|^2 \le 4\lam^2 M^2$.
Summing the two inequalities and using the facts that $\lam=\sqrt{\delta/8M^2}$ and $\varepsilon_k = \varepsilon_k^x + \varepsilon_k^y$, we have
\[
2\lam \varepsilon_k - \|x_k-x_{k-1}\|^2 - \|y_k-y_{k-1}\|^2 \le 8\lam^2 M^2 = \delta,
\]
which is \eqref{ineq:IPP} with $\sigma=1$, $(\lam_k,\delta_k)=(\lam,\delta)$, and $(\tx_k,\ty_k) = (x_k,y_k)$.
\end{proof}

\subsection{Proof of Proposition \ref{prop:PB-SPP-IPPF}} \label{subsec:pf-PB-IPPF}

\begin{proof}
We first show that PB-SPP satisfies \eqref{incl}.
It follows from \eqref{eq:xk-spp} that
\[
\frac{x_{k-1}-x_k}{\lam_k} \in \partial (\Gamma_k^x+h_1)(x_k),
\]
which implies that for every $u \in \dom h_1$,
\[
    (\Gamma_k^x+h_1)(u) \ge (\Gamma_k^x+h_1)(x_k) + \frac{1}{\lam_k}\inner{x_{k-1}-x_k}{u-x_k}.
\]
Using the first inequality in \eqref{ineq:cvx} and the definition of $p_k$ in \eqref{def:p-d-k}, we have
\[
    p_k(u) \ge (\Gamma_k^x+h_1)(u) \ge p_k(\tx_k) + \frac{1}{\lam_k}\inner{x_{k-1}-x_k}{u-\tx_k} - \varepsilon_k^x, \quad \forall u,
\]
where
$\varepsilon_k^x$ is as in \eqref{def:ekx-pb}.
Similarly, we have for every $v \in \dom h_2$,
\[
    d_k(v) \ge d_k(\ty_k) + \frac{1}{\lam_k}\inner{y_{k-1}-y_k}{v-\ty_k} - \varepsilon_k^y, \quad \forall v,
\]
where
$\varepsilon_k^y$ is as in \eqref{def:eky-pb}.
Summing the above two inequalities gives \eqref{ineq:target} with $\varepsilon_k = \varepsilon_k^x + \varepsilon_k^y$, and hence \eqref{incl} holds in view of Lemma \ref{lem:pd-equiv}.

We next show that PB-SPP satisfies \eqref{ineq:IPP}.
Indeed, it follows from the definitions of $\varepsilon_k^x$ and $\varepsilon_k^y$ in \eqref{def:ekx-pb} and \eqref{def:eky-pb}, respectively, that
\[
\|x_k-\tx_k\|^2 +\|y_k-\ty_k\|^2+2 \lam_k \varepsilon_k  = \lam_k \left(  p_k^\lam(\tilde x_k) - m_k^x + d^\lam_k(\ty_k) -m_k^y\right),
\]
where $p_k^\lam$ and $d_k^\lam$ are as in \eqref{def:p-d} and $m_k^x$ and $m_k^y$ as the optimal values of \eqref{eq:xk-spp} and \eqref{eq:yk-spp}, respectively.
In view of \eqref{def:tj-xy} and \eqref{ineq:spp-accuracy}, the above relation further implies that
\[
\|x_k-\tx_k\|^2 +\|y_k-\ty_k\|^2+2 \lam_k \varepsilon_k \le \frac{\lam_k \bar \varepsilon}{2},
\]
which is \eqref{ineq:IPP} with $\sigma=0$ and $\delta_k=\lam_k \bar \varepsilon/2$.
\end{proof}

\section{Primal-dual subgradient method for CNCO} \label{sec:PDS}

This section is devoted to the complexity analysis of PDS. The main result is Theorem~\ref{thm:pds} below.

Recall the definitions of $d_0$ and $x_0^*$ in \eqref{eq:d0}. Since $x_0^* \in B(\hat x_0, 4d_0)$, which is the ball centered at $\hat x_0$ and with radius $4d_0$, it is easy to see that to solve \eqref{eq:problem}, it suffices to solve
\begin{equation}\label{eq:constrained}
    \min \left\{\hat \phi(x):=f(x)+\hat h(x): x \in \R^n\right\}
    = \min \left\{\phi(x): x \in Q\right\},
\end{equation}
where $\hat h = h + I_Q$ and $I_Q$ is the indicator function of $Q = B(\hat x_0, 4d_0)$.
Hence, it is convenient to consider a slightly modified version of PDS$(\hat x_0,\lam)$ with $h$ replaced by $\hat h$ in \eqref{eq:pds}, denoted by MPDS$(\hat x_0,\lam)$, i.e.,
\begin{equation}\label{eq:pds-1}
    s_k = f'(\hat x_{k-1}), \quad \hat x_k = \underset{u\in\R^n}{\argmin}\left\{\ell_f(u;\hat x_{k-1}) + \hat h(u) + \frac{1}{2\lam}\|u-\hat x_{k-1}\|^2 \right\}.
\end{equation}
It is worth noting that MPDS$(\hat x_0,\lam)$ is a conceptual method since we do not know $d_0$ and hence $\hat h$. We show equivalence between PDS$(\hat x_0,\lam)$ and MPDS$(x_0,\lam)$, and only use MPDS$(\hat x_0,\lam)$ for analyzing the convegence.

We first establish the complexity of the primal-dual convergence of MPDS$(\hat x_0,\lam)$ for solving \eqref{eq:constrained}, and then we argue that MPDS$(\hat x_0,\lam)$ and PDS$(\hat x_0,\lam)$ generate the same primal and dual sequences $\{\hat x_k\}$ and $\{s_k\}$ before convergence (see Lemma \ref{pds-lemma2}). Therefore, we also give the complexity of PDS$(\hat x_0,\lam)$ for solving \eqref{eq:constrained}.

The following lemma is the starting point of the primal-dual convergence analysis.

\begin{lemma}\label{pds-lemma1}
Given $\hat x_0 \in \R^n$, for every $k \ge 1$ and $u \in \dom \hat h$, the sequence $\{\hat x_k\}$ generated by MPDS$(\hat x_0,\lam)$ satisfies
\begin{equation}\label{ineq:hat-phi-recur}
    \hat \phi(\hat x_k) - \ell_f(u;\hat x_{k-1}) - \hat h(u) \le 2\lam M^2 + \frac{1}{2\lam}\|u- \hat x_{k-1}\|^2 - \frac1{2\lam}\|u - \hat x_k\|^2.
\end{equation}
\end{lemma}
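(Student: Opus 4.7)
The plan is to combine three ingredients: (i) the strongly convex optimality characterization of $\hat x_k$ from the prox step \eqref{eq:pds-1}; (ii) the linearization error bound \eqref{ineq:f} coming from the subgradient boundedness (A2); and (iii) a Young-type inequality to absorb a cross term of the form $2M\|\hat x_k-\hat x_{k-1}\|$ by the quadratic term $\|\hat x_k-\hat x_{k-1}\|^2/(2\lam)$.

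First, since the objective in the prox step \eqref{eq:pds-1} defining $\hat x_k$ is $\lam^{-1}$-strongly convex, the optimality of $\hat x_k$ together with Theorem~5.25(b) of \cite{Beck2017} yields, for every $u\in \dom \hat h$,
\[
\ell_f(\hat x_k;\hat x_{k-1}) + \hat h(\hat x_k) + \frac{1}{2\lam}\|\hat x_k-\hat x_{k-1}\|^2 + \frac{1}{2\lam}\|u-\hat x_k\|^2 \le \ell_f(u;\hat x_{k-1}) + \hat h(u) + \frac{1}{2\lam}\|u-\hat x_{k-1}\|^2.
\]
Second, using \eqref{ineq:f} with $x=\hat x_k$ and $y=\hat x_{k-1}$, we bound
\[
\hat \phi(\hat x_k) = f(\hat x_k) + \hat h(\hat x_k) \le \ell_f(\hat x_k;\hat x_{k-1}) + \hat h(\hat x_k) + 2M\|\hat x_k-\hat x_{k-1}\|.
\]
Adding the last two inequalities and rearranging,
\[
\hat \phi(\hat x_k) - \ell_f(u;\hat x_{k-1}) - \hat h(u) \le \frac{1}{2\lam}\|u-\hat x_{k-1}\|^2 - \frac{1}{2\lam}\|u-\hat x_k\|^2 + 2M\|\hat x_k-\hat x_{k-1}\| - \frac{1}{2\lam}\|\hat x_k-\hat x_{k-1}\|^2.
\]
Finally, maximizing $2Mt - t^2/(2\lam)$ over $t\ge 0$ gives the bound $2\lam M^2$, and applying this with $t=\|\hat x_k-\hat x_{k-1}\|$ yields \eqref{ineq:hat-phi-recur}.

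None of these three steps is really an obstacle; the only mild care needed is making sure that the strong convexity constant and the Young-type inequality line up so that the quadratic penalty $\|\hat x_k-\hat x_{k-1}\|^2/(2\lam)$ in the prox step exactly cancels the $2M\|\hat x_k-\hat x_{k-1}\|$ term from the linearization error, leaving the clean $2\lam M^2$ on the right-hand side.
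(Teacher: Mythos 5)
Your proof is correct and follows essentially the same route as the paper's: the strong-convexity (Theorem 5.25(b) of \cite{Beck2017}) characterization of the prox step, the linearization bound \eqref{ineq:f} at $(\hat x_k,\hat x_{k-1})$, and Young's inequality to bound $2M\|\hat x_k-\hat x_{k-1}\|-\|\hat x_k-\hat x_{k-1}\|^2/(2\lam)$ by $2\lam M^2$. The only cosmetic difference is that the paper packages the first step via the optimal value $m_k$ rather than writing the inequality out explicitly.
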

\begin{proof}
Noticing that the objective function in \eqref{eq:pds-1} is $\lam^{-1}$-strongly convex, it then follows from Theorem 5.25(b) of \cite{Beck2017} that for every $u \in \dom \hat h$,
\begin{equation}\label{ineq:ell-h-mk}
    \ell_f(u;\hat x_{k-1}) + \hat h(u) + \frac1{2\lam}\|u-\hat x_{k-1}\|^2 \ge m_k + \frac{1}{2\lam} \|u-\hat x_k\|^2,
\end{equation}
where $m_k = \ell_f(\hat x_k;\hat x_{k-1}) + \hat h(\hat x_k) + \|\hat x_k-\hat x_{k-1}\|^2/(2\lam)$.
Using \eqref{ineq:f} with $(x,y)=(\hat x_k,\hat x_{k-1})$, we have
\[
    \hat \phi(\hat x_k) - m_k = f(\hat x_k) - \ell_f(\hat x_k;\hat x_{k-1}) \stackrel{\eqref{ineq:f}}{\le} 2M\|\hat x_k - \hat x_{k-1}\| - \frac1{2\lam}\|\hat x_k-\hat x_{k-1}\|^2 \le 2\lam M^2,
\]
where the last inequality is due to Young's inequality $a^2+b^2\ge 2ab$.
Hence, \eqref{ineq:hat-phi-recur} follows from combining the above inequality and \eqref{ineq:ell-h-mk}.
\end{proof}

The next result presents the primal-dual convergence rate of MPDS$(\hat x_0,\lam)$.

\begin{lemma}\label{pds-lemma3}
For every $k\ge 1 $, define
\begin{equation}\label{pds-bar-xg}
    \bar x_k = \frac1k \sum_{i=1}^{k} \hat x_i, \quad \bar s_k = \frac1k \sum_{i=1}^k s_i.
\end{equation}
Then, we have for every $k\ge 1 $, the primal-dual gap of \eqref{eq:constrained} is bounded as follows,
\begin{equation}\label{pds-lemma3-0}
    \hat \phi(\bar x_k) + f^*(\bar s_k) + \hat h^*(-\bar s_k) \le 2\lam M^2 + \frac{8 d_0^2}{\lam k}.
\end{equation}
\end{lemma}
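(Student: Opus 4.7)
The plan is to start from Lemma~\ref{pds-lemma1}, convert the affine majorant $\ell_f(u;\hat x_{i-1})$ into a Fenchel-conjugate expression, then sum the resulting inequality over $i=1,\ldots,k$ and use convexity plus telescoping to obtain an intermediate bound of the form
\[
\hat\phi(\bar x_k) + f^*(\bar s_k) - \langle \bar s_k, u\rangle - \hat h(u) \le 2\lam M^2 + \frac{1}{2\lam k}\|u-\hat x_0\|^2
\]
valid for every $u\in\R^n$. The finish is to exploit $\hat h = h + I_Q$ so that this inequality is informative only on $u\in Q=\cball{\hat x_0}{4d_0}$, where the quadratic term is bounded by $8 d_0^2/(\lam k)$, and then to take the supremum in $u$ to recognize $\hat h^*(-\bar s_k)$.

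Concretely, first I would use $s_i = f'(\hat x_{i-1}) \in \partial f(\hat x_{i-1})$ and the Fenchel--Young equality $f(\hat x_{i-1}) + f^*(s_i) = \langle s_i, \hat x_{i-1}\rangle$ to rewrite $\ell_f(u;\hat x_{i-1}) = -f^*(s_i) + \langle s_i, u\rangle$. Substituting this into \eqref{ineq:hat-phi-recur} (with $k$ replaced by $i$) and rearranging yields
\[
\hat\phi(\hat x_i) + f^*(s_i) - \langle s_i, u\rangle - \hat h(u) \le 2\lam M^2 + \frac{1}{2\lam}\|u-\hat x_{i-1}\|^2 - \frac{1}{2\lam}\|u-\hat x_i\|^2.
\]
Summing from $i=1$ to $k$, dividing by $k$, telescoping the quadratic terms, dropping the non-positive residual $-\|u-\hat x_k\|^2/(2\lam k)$, and invoking the convexity of $\hat\phi$ and $f^*$ together with the definitions in \eqref{pds-bar-xg} produces the displayed intermediate inequality.

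For the final step, note that the intermediate inequality is trivial when $u\notin Q$ since $\hat h(u)=+\infty$ there; for $u\in Q$ one has $\|u-\hat x_0\|\le 4 d_0$, so $\|u-\hat x_0\|^2/(2\lam k)\le 8d_0^2/(\lam k)$. Hence, uniformly in $u\in\R^n$,
\[
\hat\phi(\bar x_k) + f^*(\bar s_k) + \langle -\bar s_k, u\rangle - \hat h(u) \le 2\lam M^2 + \frac{8d_0^2}{\lam k},
\]
and taking the supremum over $u$ recognizes $\hat h^*(-\bar s_k)=\sup_u\{\langle -\bar s_k, u\rangle - \hat h(u)\}$ on the left, producing \eqref{pds-lemma3-0}.

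The only delicate point is this last supremum step, i.e., producing $\hat h^*(-\bar s_k)$ from the mixed quadratic-plus-indicator penalty in $u$. The saving grace relative to the heavier infimal-convolution bookkeeping used in Proposition~\ref{prop:pd-key} is that here the single quadratic term can be dominated uniformly by a constant on the compact set $Q$, so the passage from a Moreau-style penalty to the Fenchel conjugate $\hat h^*(-\bar s_k)$ is immediate and does not invoke Theorem~4.17 of \cite{Beck2017}.
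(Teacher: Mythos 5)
Your proposal is correct and follows essentially the same route as the paper's proof: substitute a conjugate expression for the linearization into Lemma~\ref{pds-lemma1}, sum and telescope, use convexity of $\hat\phi$ and $f^*$, and then take the supremum over $u\in\dom\hat h\subset Q$ to produce $\hat h^*(-\bar s_k)$ and the bound $\max_{u\in Q}\|u-\hat x_0\|^2/(2\lam k)=8d_0^2/(\lam k)$. The only cosmetic difference is that you obtain $\ell_f(u;\hat x_{i-1})=-f^*(s_i)+\langle s_i,u\rangle$ as an equality via Fenchel--Young at $\hat x_{i-1}$, whereas the paper derives the (sufficient) inequality $\ell_f(u;\hat x_{i-1})\le -f^*(s_i)+\langle s_i,u\rangle$ from $\ell_f\le f\Rightarrow (\ell_f)^*\ge f^*$; both are valid.
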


\begin{proof}
We first note that $\ell_f(\cdot;\hat x_{k-1}) \le f$ and hence $(\ell_f(\cdot;\hat x_{k-1}))^* \ge f^*$. 
Using this inequality and the fact that $\nabla \ell_f(u;\hat x_{k-1}) = s_k$ for every $u \in \R^n$, we have
\[
\ell_f(u;\hat x_{k-1}) =-[\ell_f(\cdot;\hat x_{k-1})]^*(s_k) + \inner{s_k}{u} \le-f^*(s_k) + \inner{s_k}{u}.
\]
It thus follows from Lemma \ref{pds-lemma1} that for every $u \in \dom \hat h$,
\[
\hat \phi(\hat x_k) + f^*(s_k) - \inner{s_k}{u} - \hat h(u) \stackrel{\eqref{ineq:hat-phi-recur}}{\le} 2\lam M^2  + \frac{1}{2\lam}\|u - \hat x_{k-1} \|^2 - \frac1{2\lam}\|u - \hat x_k\|^2.
\]
Replacing the index $k$ in the above inequality by $i$, summing the resulting inequality from $i=1$ to $k$, and using convexity of $\hat \phi$ and $f^*$, we obtain for every $u \in \dom \hat h$,
\[
\hat \phi(\bar x_k) + f^*(\bar s_k) + \inner{-\bar s_k}{u} - \hat h(u) \le 2\lam M^2 + \frac{1}{2\lam k}\|u - \hat x_0\|^2,
\]
where $\bar x_k$ and $\bar s_k$ are as in \eqref{pds-bar-xg}.
Maximizing over $u\in \dom \hat h$ on both sides of the above inequality, we have
\[
\hat \phi(\bar x_k) + f^*(\bar s_k) + \hat h^*(-\bar s_k) \le 2\lam M^2 + \frac{\max\{\|u-\hat x_0\|^2: u \in \dom \hat h\} }{2\lam k}.
\]
Therefore, \eqref{pds-lemma3-0} follows by using the fact that $\dom \hat h \subset Q= B(\hat x_0, 4d_0)$.
\end{proof}

The following theorem provides the complexity of MPDS$(\hat x_0,\lam)$ for solving \eqref{eq:constrained}.

\begin{theorem}\label{pds-theorem1}
Given $(\hat x_0,\bar \varepsilon) \in \R^n \times \R_{++}$, letting $\lam = \bar \varepsilon/(16M^2)$, then the number of iterations for MPDS$(\hat x_0,\lam)$ to generate a primal-dual pair $(\bar x_k, \bar s_k)$ as in \eqref{pds-bar-xg} such that $\hat \phi(\bar x_k) + f^*(\bar s_k) + \hat h^*(-\bar s_k) \le \bar \varepsilon$
is at most $256 M^2 d_0^2/\bar \varepsilon^2$.
\end{theorem}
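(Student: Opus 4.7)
The plan is to invoke Lemma \ref{pds-lemma3} and then tune the stepsize $\lam$ and iteration count $k$ to make the right-hand side of \eqref{pds-lemma3-0} at most $\bar\varepsilon$. Since all the analytical work is already packaged in Lemma \ref{pds-lemma3}, the theorem reduces to an arithmetic verification. There is no real obstacle here; the only choice is how to split the target tolerance $\bar\varepsilon$ between the two terms $2\lam M^2$ and $8 d_0^2/(\lam k)$ in the bound.

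First I would apply Lemma \ref{pds-lemma3} to write
\[
\hat \phi(\bar x_k) + f^*(\bar s_k) + \hat h^*(-\bar s_k) \le 2\lam M^2 + \frac{8 d_0^2}{\lam k}.
\]
Next I would substitute the prescribed stepsize $\lam = \bar\varepsilon/(16 M^2)$, which immediately gives $2\lam M^2 = \bar\varepsilon/8$, so the first term is already well below $\bar\varepsilon$. It remains to force the second term to be at most $\bar\varepsilon - \bar\varepsilon/8 = 7\bar\varepsilon/8$; a cruder and cleaner sufficient condition is to demand the second term to be at most $\bar\varepsilon/2$, which yields $k \ge 16 d_0^2/(\lam \bar\varepsilon)$. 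Substituting $\lam = \bar\varepsilon/(16 M^2)$ into this lower bound produces $k \ge 256 M^2 d_0^2/\bar\varepsilon^2$.

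Finally, I would verify that the claimed complexity $k = 256 M^2 d_0^2/\bar\varepsilon^2$ is admissible by plugging back in: the first term is $\bar\varepsilon/8$, the second term is $\bar\varepsilon/2$, and the sum is $5\bar\varepsilon/8 \le \bar\varepsilon$, which concludes the proof. The argument is essentially a one-line corollary of Lemma \ref{pds-lemma3} and involves no new estimates, so the presentation can be kept very short.
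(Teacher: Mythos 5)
Your proposal is correct and follows exactly the same route as the paper: invoke Lemma \ref{pds-lemma3}, note that $\lam = \bar\varepsilon/(16M^2)$ makes the first term $\bar\varepsilon/8$, and choose $k = 16 d_0^2/(\lam\bar\varepsilon) = 256 M^2 d_0^2/\bar\varepsilon^2$ so that the second term is $\bar\varepsilon/2$. The arithmetic checks out and nothing is missing.
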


\begin{proof}
It follows from Lemma \ref{pds-lemma3} with $\lam = \bar \varepsilon/(16M^2)$ and $k=16 d_0^2/(\lam \bar \varepsilon)$ that
\[
\hat \phi(\bar x_k) + f^*(\bar s_k) + \hat h^*(-\bar s_k) \le \frac{\bar \varepsilon}{8} + \frac{\bar \varepsilon}{2} < \bar \varepsilon.
\]
Therefore, the theorem immediately follows from plugging $\lam= \bar \varepsilon/(16M^2)$ into $k=16 d_0^2/(\lam \bar \varepsilon)$.
\end{proof}

The next lemma gives the boundedness of $\{\hat x_k\}$ generated by PDS$(\hat x_0,\lam)$ and shows that $\{\hat x_k\} \subset Q = B(\hat x_0,4d_0)$. This result is important since it reveals the equivalence between PDS and MPDS, which is useful in Theorem \ref{thm:pds} below.

\begin{lemma}\label{pds-lemma2}
    For every $k \le 256M^2 d_0^2/\bar \varepsilon^2$, the sequence $\{\hat x_k\}$ generated by PDS$(\hat x_0,\lam)$ with $\lam=\bar \varepsilon/(16 M^2)$ satisfies $\hat x_k \in Q$.
\end{lemma}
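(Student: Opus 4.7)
The plan is to mimic Lemma~\ref{pds-lemma1} but for the unconstrained iterate \eqref{eq:pds} of PDS (with $h$ in place of $\hat h$), and then telescope the resulting recursion with the test point $u = x_0^*$. First, I would repeat the argument of Lemma~\ref{pds-lemma1} verbatim, using only the $\lam^{-1}$-strong convexity of the objective in \eqref{eq:pds} and the Lipschitz-type inequality \eqref{ineq:f} applied to $(\hat x_k,\hat x_{k-1})$; the substitution $\hat h \to h$ does not affect any step. This yields, for every $u\in\dom h$,
\[
\phi(\hat x_k) - \ell_f(u;\hat x_{k-1}) - h(u) \le 2\lam M^2 + \frac{1}{2\lam}\|u-\hat x_{k-1}\|^2 - \frac{1}{2\lam}\|u-\hat x_k\|^2.
\]

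Next, I would plug $u = x_0^* \in X_*\subset \dom h$. Since $\ell_f(\cdot;\hat x_{k-1})\le f$ by convexity and $\phi(\hat x_k)\ge \phi_* = f(x_0^*)+h(x_0^*)$, the left-hand side is nonnegative. Rearranging gives the one-step recursion
\[
\|x_0^*-\hat x_k\|^2 \le \|x_0^*-\hat x_{k-1}\|^2 + 4\lam^2 M^2.
\]
Summing from $1$ to $k$ and using $\|x_0^*-\hat x_0\|=d_0$ yields $\|x_0^*-\hat x_k\|^2 \le d_0^2 + 4\lam^2 M^2 k$.

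Plugging in $\lam=\bar\varepsilon/(16M^2)$ and the hypothesis $k\le 256 M^2 d_0^2/\bar\varepsilon^2$ gives $4\lam^2 M^2 k \le 4d_0^2$. Using the subadditivity $\sqrt{a+b}\le\sqrt{a}+\sqrt{b}$, I obtain $\|x_0^*-\hat x_k\| \le d_0 + 2d_0 = 3d_0$. The triangle inequality then produces
\[
\|\hat x_k - \hat x_0\| \le \|\hat x_k - x_0^*\| + \|x_0^* - \hat x_0\| \le 3d_0 + d_0 = 4d_0,
\]
so $\hat x_k \in B(\hat x_0,4d_0) = Q$, as desired.

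There is no real obstacle here: the only subtle point is to notice that the proof of Lemma~\ref{pds-lemma1} never uses the indicator term in $\hat h$ in any essential way, so exactly the same derivation applies to the PDS iterate with $h$ in place of $\hat h$, making the PDS trajectory amenable to the same descent-from-$x_0^*$ analysis.
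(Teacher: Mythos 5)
Your proposal is correct and follows essentially the same route as the paper's proof: derive the unconstrained analogue of Lemma~\ref{pds-lemma1} for the PDS iterate, specialize to $u=x_0^*$, use $\ell_f(\cdot;\hat x_{k-1})\le f$ and $\phi(\hat x_k)\ge\phi_*$ to obtain the recursion $\|\hat x_k-x_0^*\|^2\le\|\hat x_{k-1}-x_0^*\|^2+4\lam^2M^2$, telescope, and finish with subadditivity of the square root and the triangle inequality to land inside $B(\hat x_0,4d_0)$. The only cosmetic difference is that you absorb the bound $2\lam M\sqrt{k}\le 2d_0$ before applying the triangle inequality, whereas the paper applies it after; the arithmetic is identical.
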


\begin{proof}
    Following an argument similar to the proof of Lemma \ref{pds-lemma1}, we can prove for every $u \in \dom h$,
    \[
    \phi(\hat x_k) - \ell_f(u;\hat x_{k-1}) - h(u) \le  2\lam M^2 - \frac1{2\lam}\|u - \hat x_k\|^2+ \frac{1}{2\lam}\|u-\hat x_{k-1}\|^2,
    \]
    which together with the fact that $\ell_f(\cdot;\hat x_{k-1}) \le f$ implies that
    \begin{align*}
        \phi(\hat x_k) - \phi(u) \le 2\lam M^2 - \frac1{2\lam}\|u - \hat x_k\|^2+ \frac{1}{2\lam}\|u-\hat x_{k-1}\|^2.
    \end{align*}
    Taking $u=x_0^*$ and using the fact that $\phi(\hat x_k)\ge \phi_*= \phi(x_0^*)$, we obtain
    \[
    \|\hat x_k - x_0^*\|^2 \le 4\lam^2 M^2 + \|\hat x_{k-1} - x_0^*\|^2.
    \]
    Summing the above inequality, we show that for every $k \ge 1$, $\{\hat x_k\}$ generated by PDS$(\hat x_0,\lam)$ satisfies
    \begin{equation}\label{ineq:dk}
        \|\hat x_k - x_0^*\|^2 \le d_0^2 + 4\lam^2 M^2 k.
    \end{equation}
    Using the triangle inequality and the fact that $\sqrt{a+b} \le \sqrt{a} + \sqrt{b}$ for $a,b \ge 0$, we have
    \[
    \|\hat x_k - \hat x_0\| \le \|\hat x_k - x_0^*\| + \|\hat x_0 - x_0^*\| \stackrel{\eqref{ineq:dk}}{\le} 2d_0 + 2\lam M \sqrt{k}. 
    \]
    It thus follows from the assumptions on $k$ and $\lam$ that
    \[
    \|\hat x_k - \hat x_0\| \le 2d_0 + \frac{\bar \varepsilon}{8M} \frac{16M d_0}{\bar \varepsilon} = 4d_0,
    \]
    and hence that $x_k \in Q = B(\hat x_0,4d_0)$.
\end{proof}

Finally, using the complexity of MPDS$(\hat x_0,\lam)$ for solving \eqref{eq:constrained} (i.e., Theorem \ref{pds-theorem1}), we are ready to establish that of PDS$(\hat x_0,\lam)$.

\begin{theorem}\label{thm:pds}
Given $(\hat x_0,\bar \varepsilon) \in \R^n \times \R_{++}$, letting $\lam = \bar \varepsilon/(16M^2)$, then the number of iterations for PDS$(\hat x_0,\lam)$ to generate $(\bar x_k, \bar s_k)$ such that $\hat \phi(\bar x_k) + f^*(\bar s_k) + \hat h^*(-\bar s_k) \le \bar \varepsilon$
is at most $256 M^2 d_0^2/\bar \varepsilon^2$.
\end{theorem}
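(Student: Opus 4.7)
The plan is to reduce PDS to the conceptual method MPDS and then invoke Theorem \ref{pds-theorem1} as a black box. The only structural difference between the two recursions is that MPDS replaces $h$ by $\hat h = h + I_Q$ in the proximal update \eqref{eq:pds-1}, so the MPDS iterate is the minimizer of the PDS objective restricted to $Q = B(\hat x_0, 4d_0)$. Whenever the unconstrained (PDS) minimizer already lies in $Q$, it also minimizes the constrained (MPDS) objective, so the two updates coincide.

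I would then establish by induction on $k$, with base case $\hat x_0 \in Q$, that the PDS and MPDS sequences $\{\hat x_k\}$ and $\{s_k\}$ are identical for every $k \le 256 M^2 d_0^2/\bar\varepsilon^2$. The inductive step uses Lemma \ref{pds-lemma2}, which guarantees that the PDS iterate $\hat x_k$ stays in $Q$ throughout this horizon; once identity holds at step $k$, the subgradient $s_{k+1} = f'(\hat x_k)$ is also shared, and the argument of the preceding paragraph shows that the two proximal updates produce the same $\hat x_{k+1}$. Consequently, the averaged iterates $(\bar x_k, \bar s_k)$ in \eqref{pds-bar-xg} produced by PDS coincide with those produced by MPDS on this range of $k$.

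To finish, I would apply Theorem \ref{pds-theorem1} to MPDS$(\hat x_0, \lam)$ with $\lam = \bar\varepsilon/(16 M^2)$, which yields the bound $\hat\phi(\bar x_k) + f^*(\bar s_k) + \hat h^*(-\bar s_k) \le \bar\varepsilon$ for some $k \le 256 M^2 d_0^2/\bar\varepsilon^2$; by the equivalence just established, the identical $(\bar x_k, \bar s_k)$ is output by PDS and satisfies the same bound.

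There is no real obstacle beyond the bookkeeping of the induction; the substantive work is already done in Lemma \ref{pds-lemma2} and Theorem \ref{pds-theorem1}. The one subtlety worth flagging explicitly is that Lemma \ref{pds-lemma2} is proved directly for PDS (using $h$, not $\hat h$), so the induction can invoke it without any circularity: the containment $\hat x_k \in Q$ is an intrinsic property of the PDS trajectory, and that is exactly what lets the PDS update agree with MPDS's constrained update at each step.
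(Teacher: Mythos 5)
Your proposal is correct and follows essentially the same route as the paper: the paper's proof also invokes Lemma \ref{pds-lemma2} to conclude that the PDS and MPDS trajectories (and hence the averaged pairs $(\bar x_k,\bar s_k)$) coincide for $k \le 256M^2d_0^2/\bar\varepsilon^2$, and then transfers the guarantee of Theorem \ref{pds-theorem1}. Your version merely spells out the induction and the non-circularity of using Lemma \ref{pds-lemma2} more explicitly than the paper does.
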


\begin{proof}
In view of Lemma \ref{pds-lemma2}, for $\lam=\bar \varepsilon/(16 M^2)$ and $k \le 256M^2 d_0^2/\bar \varepsilon^2$, the sequence $\{\hat x_k\}$ generated by PDS$(\hat x_0,\lam)$ is the same as the one generated by MPDS$(\hat x_0,\lam)$. Hence, sequences $\{s_k\}$ generated by the two methods are also the same, that is, \eqref{eq:pds-1} is identical to \eqref{eq:pds}.
Therefore, we conclude that the same primal-dual convergence guarantee holds for PDS$(\hat x_0,\lam)$ as the one for MPDS$(\hat x_0,\lam)$ in Theorem \ref{pds-theorem1}.
\end{proof}

\section{Composite subgradient method for SPP} \label{sec:CS-SPP}

This section is devoted to the complexity analysis of CS-SPP. The main result is Theorem~\ref{thm:cs-spp} below.

\begin{lemma}\label{lem:tech-h1}
For every $k \ge 1$ and $(u, v) \in \R^n \times \R^m$, we have
    \begin{align}
        &p_k(x_k) - \ell_{f(\cdot,y_{k-1})}(u;x_{k-1}) - h_1(u) \le \delta_k^x  + \frac{1}{2\lam}\|x_{k-1} - u\|^2 - \frac1{2\lam}\|x_k - u\|^2, \label{ineq:p-1}\\
        &d_k(y_k) + \ell_{f(x_{k-1},\cdot)}(v;y_{k-1}) - h_2(v) \le \delta_k^y  + \frac{1}{2\lam}\|y_{k-1} - v\|^2 - \frac1{2\lam}\|y_k - v\|^2, \label{ineq:d-1}
    \end{align}
    where
    \begin{equation}\label{def:delta}
        \delta_k^x = 2M\|x_k - x_{k-1}\| - \frac{1}{2\lam}\|x_k-x_{k-1}\|^2, \quad 
        \delta_k^y = 2M\|y_k - y_{k-1}\| - \frac{1}{2\lam}\|y_k-y_{k-1}\|^2. 
    \end{equation}
\end{lemma}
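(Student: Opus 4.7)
The proof plan parallels that of Lemma \ref{pds-lemma1}, now applied to the partial proximal subgradient steps \eqref{eq:spp-xk} and \eqref{eq:spp-yk}. The two inequalities \eqref{ineq:p-1} and \eqref{ineq:d-1} are symmetric, so I will focus on \eqref{ineq:p-1} and note that \eqref{ineq:d-1} follows by an identical argument with the roles of the $x$- and $y$-variables swapped (using that $-f(x_{k-1},\cdot)$ is convex and that the subgradient of $-\ell_{f(x_{k-1},\cdot)}$ in the $y$-update is bounded by $M$).

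First, I would observe that the objective in \eqref{eq:spp-xk} is $\lam^{-1}$-strongly convex. Applying Theorem 5.25(b) of \cite{Beck2017} at the minimizer $x_k$ yields, for every $u \in \R^n$,
\begin{equation*}
\ell_{f(\cdot,y_{k-1})}(u;x_{k-1}) + h_1(u) + \frac{1}{2\lam}\|u-x_{k-1}\|^2 \ge \mu_k + \frac{1}{2\lam}\|u - x_k\|^2,
\end{equation*}
where $\mu_k := \ell_{f(\cdot,y_{k-1})}(x_k;x_{k-1}) + h_1(x_k) + \frac{1}{2\lam}\|x_k-x_{k-1}\|^2$.

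Next, I would use the definition of $p_k$ in \eqref{def:p-d-k} together with the linearization error bound \eqref{ineq:Lips-spp} (specifically the first inequality applied at $u = x_k$ with the base point $x_{k-1}$) to estimate
\begin{equation*}
p_k(x_k) - \mu_k = f(x_k,y_{k-1}) - \ell_{f(\cdot,y_{k-1})}(x_k;x_{k-1}) - \frac{1}{2\lam}\|x_k-x_{k-1}\|^2 \le \delta_k^x,
\end{equation*}
where the final bound uses the definition of $\delta_k^x$ in \eqref{def:delta}.

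Finally, adding the two displayed inequalities and rearranging gives \eqref{ineq:p-1}. For \eqref{ineq:d-1}, I would repeat this argument verbatim on \eqref{eq:spp-yk}, using that the surrogate objective for $y_k$ is also $\lam^{-1}$-strongly convex and invoking the second inequality in \eqref{ineq:Lips-spp} to obtain the $\delta_k^y$ bound. There is no real obstacle here; the lemma is a straightforward composite-subgradient descent-type inequality whose only nontrivial ingredients (strong convexity of the prox model and the subgradient Lipschitz estimate) are already in place.
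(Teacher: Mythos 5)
Your proposal is correct and follows essentially the same route as the paper's proof: strong convexity of the objective in \eqref{eq:spp-xk} via Theorem 5.25(b) of \cite{Beck2017} to get the three-point inequality at the minimizer, then bounding $p_k(x_k)$ minus the optimal value by $\delta_k^x$ using the first inequality in \eqref{ineq:Lips-spp}, with the symmetric argument for \eqref{ineq:d-1}. Your $\mu_k$ is exactly the paper's $m_k^x$, so there is nothing to add.
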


\begin{proof}
We only prove \eqref{ineq:p-1} to avoid duplication. Inequality \eqref{ineq:d-1} follows similarly.
Since the objective in \eqref{eq:spp-xk} is $\lam^{-1}$-strongly convex, we have for every $u \in \R^n$,
\begin{equation}\label{ineq:spp-x}
    \ell_{f(\cdot,y_{k-1})}(u;x_{k-1}) + h_1(u) + \frac1{2\lam}\|u-x_{k-1}\|^2 \ge m_k^x + \frac{1}{2\lam} \|u-x_{k}\|^2,
\end{equation}
where $m_k^x$ denotes the optimal value of \eqref{eq:spp-xk}.
Using the definition of $p_k$ in \eqref{def:p-d-k}, we have
\[
    p_k(x_k) - m_k^x = f(x_k,y_{k-1}) -\ell_{f(\cdot,y_{k-1})}(x_k;x_{k-1}) - \frac{1}{2\lam}\|x_k-x_{k-1}\|^2.
\]
It thus follows from the first inequality in \eqref{ineq:Lips-spp} with $(u,x,y)=(x_k,x_{k-1},y_{k-1})$ the definition of $\delta_k^x$ in \eqref{def:delta} that
\[
p_k(x_k) - m_k^x \le \delta_k^x,
\]
which together with \eqref{ineq:spp-x} implies that \eqref{ineq:p-1}.
\end{proof}

For $k \ge 1$, denote
\begin{equation}\label{eq:s3}
    s_k = (s_k^x, s_k^y), \quad s_k^x = f_x'(x_{k-1},y_{k-1}), \quad s_k^y =- f_y'(x_{k-1},y_{k-1}).
\end{equation}

We also denote $w = (u,v)$ and $z_k = (x_k,y_k)$ for all $k \ge 0$.

\begin{lemma}
For every $(u,v) \in \R^n \times \R^m$ and $k \ge 1$, we have
    \begin{align}
    &p_k(x_k) + f(\cdot,y_{k-1})^*(s_k^x) - h_1(u) + d_k(y_k) + [-f(x_{k-1},\cdot)]^*(s_k^y) - h_2(v) - \inner{s_k}{w} \nn \\
    \le & \delta_k^x + \delta_k^y  + \frac{1}{2\lam}\|z_{k-1} - w\|^2 - \frac1{2\lam}\|z_{k} - w\|^2. \label{ineq:p-d}
    \end{align}
\end{lemma}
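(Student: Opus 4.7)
The plan is to start from the two one-sided bounds \eqref{ineq:p-1} and \eqref{ineq:d-1} already proved in Lemma~\ref{lem:tech-h1}, and then convert the linear terms $-\ell_{f(\cdot,y_{k-1})}(u;x_{k-1})$ and $\ell_{f(x_{k-1},\cdot)}(v;y_{k-1})$ into Fenchel–Young style lower bounds involving the conjugates $f(\cdot,y_{k-1})^*(s_k^x)$ and $[-f(x_{k-1},\cdot)]^*(s_k^y)$. Adding the resulting bounds and recognizing $\|x_{k-1}-u\|^2+\|y_{k-1}-v\|^2 = \|z_{k-1}-w\|^2$ (and likewise for $k$) will yield \eqref{ineq:p-d}.

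The main computation for the primal side is as follows. Because $s_k^x = f_x'(x_{k-1},y_{k-1}) \in \partial_x f(x_{k-1},y_{k-1})$ by (B1) and \eqref{eq:s3}, the affine function $\ell_{f(\cdot,y_{k-1})}(\cdot;x_{k-1})$ is a minorant of $f(\cdot,y_{k-1})$ with gradient $s_k^x$. Direct computation (or Theorem~4.20 of \cite{Beck2017}) gives
\[
\ell_{f(\cdot,y_{k-1})}(u;x_{k-1}) = \inner{s_k^x}{u} - \big[\ell_{f(\cdot,y_{k-1})}(\cdot;x_{k-1})\big]^*(s_k^x) \le \inner{s_k^x}{u} - f(\cdot,y_{k-1})^*(s_k^x),
\]
where the inequality uses that minorants have larger conjugates. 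Rearranging and plugging into \eqref{ineq:p-1} yields
\[
p_k(x_k) + f(\cdot,y_{k-1})^*(s_k^x) - \inner{s_k^x}{u} - h_1(u) \le \delta_k^x + \frac{1}{2\lam}\|x_{k-1}-u\|^2 - \frac{1}{2\lam}\|x_k - u\|^2.
\]

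For the dual side, the subtle point — and the only real thing to get right — is the sign: since $-f(x_{k-1},\cdot)$ is convex in its argument, $s_k^y = -f_y'(x_{k-1},y_{k-1})$ is a subgradient of $-f(x_{k-1},\cdot)$ at $y_{k-1}$, and $-\ell_{f(x_{k-1},\cdot)}(\cdot;y_{k-1})$ is precisely its linearization there. Applying the same conjugate identity to this linearization gives
\[
-\ell_{f(x_{k-1},\cdot)}(v;y_{k-1}) \le \inner{s_k^y}{v} - [-f(x_{k-1},\cdot)]^*(s_k^y),
\]
i.e., $\ell_{f(x_{k-1},\cdot)}(v;y_{k-1}) \ge [-f(x_{k-1},\cdot)]^*(s_k^y) - \inner{s_k^y}{v}$. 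Substituting into \eqref{ineq:d-1} produces the analogue
\[
d_k(y_k) + [-f(x_{k-1},\cdot)]^*(s_k^y) - \inner{s_k^y}{v} - h_2(v) \le \delta_k^y + \frac{1}{2\lam}\|y_{k-1}-v\|^2 - \frac{1}{2\lam}\|y_k - v\|^2.
\]

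Finally, summing the two displayed inequalities and using $\inner{s_k^x}{u}+\inner{s_k^y}{v} = \inner{s_k}{w}$ together with $\|z_{k-1}-w\|^2 = \|x_{k-1}-u\|^2+\|y_{k-1}-v\|^2$ and the analogous identity for $z_k$ delivers \eqref{ineq:p-d} exactly. There is no real obstacle here; the only place where one must be careful is keeping the sign conventions straight when passing from $f(x_{k-1},\cdot)$ (concave) to $-f(x_{k-1},\cdot)$ (convex) in order to invoke convex-conjugacy on the dual block.
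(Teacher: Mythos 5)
Your proposal is correct and follows essentially the same route as the paper: both start from the two one-sided bounds \eqref{ineq:p-1}--\eqref{ineq:d-1}, replace the linearizations by conjugate expressions via the identity for affine minorants (Theorem 4.20 of \cite{Beck2017}) combined with the fact that minorants have larger conjugates, and sum. Your explicit sign-tracking on the dual block matches what the paper leaves to a ``similarly'' remark.
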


\begin{proof}
It follows from the second identity in \eqref{eq:s3} that for every $u \in \R^n$,
\[
\nabla \ell_{f(\cdot,y_{k-1})}(u;x_{k-1}) = s_k^x,
\]
which together with Theorem 4.20 of \cite{Beck2017} implies that
\[
\ell_{f(\cdot,y_{k-1})}(u;x_{k-1}) + [\ell_{f(\cdot,y_{k-1})}(\cdot;x_{k-1})]^*(s_k^x) = \inner{u}{s_k^x}.
\]
Clearly, $\ell_{f(\cdot,y_{k-1})}(\cdot;x_{k-1}) \le f(\cdot, y_{k-1})$ and hence $[\ell_{f(\cdot,y_{k-1})}(\cdot;x_{k-1})]^* \ge f(\cdot, y_{k-1})^*$.
This inequality and the above identity imply that
\[
    \ell_{f(\cdot,y_{k-1})}(u;x_{k-1})
    \le-f(\cdot,y_{k-1})^*(s_k^x) + \inner{s_k^x}{u}. 
\]
It thus follows from \eqref{ineq:p-1} that
\[
p_k(x_k) + f(\cdot,y_{k-1})^*(s_k^x) - \inner{s_k^x}{u} - h_1(u) \le \delta_k^x  + \frac{1}{2\lam}\|x_{k-1} - u\|^2 - \frac1{2\lam}\|x_{k} - u\|^2.
\]
Similarly, we have for every $v\in \R^m$,
\[
d_k(y_k) + [-f(x_{k-1},\cdot)]^*(s_k^y) - \inner{s_k^y}{v} - h_2(v) \le \delta_k^y  + \frac{1}{2\lam}\|y_{k-1} - v\|^2 - \frac1{2\lam}\|y_{k} - v\|^2.
\]
Finally, summing the above two inequalities and using \eqref{eq:s3} and the facts that $w = (u,v)$ and $z_k = (x_k,y_k)$, we conclude that \eqref{ineq:p-d} holds.
\end{proof}

\begin{lemma}\label{lem:tech-h}
For every $(u,v) \in \R^n \times \R^m$ and $k \ge 1$, we have
    \begin{align} \label{ineq:xy}
& h_1(x_k) + f(\cdot,y_k)^*(s_k^x) - h_1(u) + h_2(y_k) + [-f(x_{k-1},\cdot)]^*(s_k^y) - h_2(v) - \inner{s_k}{w}  \nn \\
    \le& 16\lam M^2 + \frac{1}{2\lam}\|z_{k-1} - w\|^2 - \frac1{2\lam}\|z_k - w\|^2.
\end{align}
\end{lemma}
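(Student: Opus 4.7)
The plan is to derive \eqref{ineq:xy} from the preceding inequality \eqref{ineq:p-d} by performing two rewrites on the left-hand side and absorbing the resulting error terms into the constant $16\lam M^2$.

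First I would use the definitions in \eqref{def:p-d-k}, namely $p_k(x_k) = f(x_k, y_{k-1}) + h_1(x_k)$ and $d_k(y_k) = -f(x_{k-1}, y_k) + h_2(y_k)$, to peel off the cross-difference $f(x_k, y_{k-1}) - f(x_{k-1}, y_k)$. Inserting $\pm f(x_{k-1}, y_{k-1})$ and invoking the $M$-Lipschitz continuity of $f$ in each argument along the iterates (a direct consequence of (B2) since $x_{k-1}, x_k \in \dom h_1$ and $y_{k-1}, y_k \in \dom h_2$) yields
\begin{equation*}
-f(x_k, y_{k-1}) + f(x_{k-1}, y_k) \le M\|x_k - x_{k-1}\| + M\|y_k - y_{k-1}\|.
\end{equation*}

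Second, I would handle the conjugate shift $f(\cdot, y_{k-1})^*(s_k^x) \to f(\cdot, y_k)^*(s_k^x)$. Since (B2) provides $|f(x, y_k) - f(x, y_{k-1})| \le M\|y_k - y_{k-1}\|$ uniformly for $x$ in the effective domain, taking the supremum in the Fenchel conjugate preserves the shift and gives
\begin{equation*}
f(\cdot, y_k)^*(s_k^x) \le f(\cdot, y_{k-1})^*(s_k^x) + M\|y_k - y_{k-1}\|.
\end{equation*}

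Adding these two corrections to \eqref{ineq:p-d}, the cumulative deviation to absorb is $\delta_k^x + \delta_k^y + M\|x_k - x_{k-1}\| + 2M\|y_k - y_{k-1}\|$. Recalling \eqref{def:delta}, I would pair the linear-in-$\|x_k - x_{k-1}\|$ contributions with $\delta_k^x$ and the linear-in-$\|y_k - y_{k-1}\|$ contributions with $\delta_k^y$, then apply the elementary bound $at - t^2/(2\lam) \le \lam a^2/2$ with $a = 3M$ and $a = 4M$, respectively, to get $9\lam M^2/2 + 8\lam M^2 = 25\lam M^2/2 \le 16\lam M^2$. Combined with the unchanged telescoping term $(\|z_{k-1}-w\|^2 - \|z_k - w\|^2)/(2\lam)$, this yields \eqref{ineq:xy}.

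The main obstacle is the conjugate shift: the inequality $f(\cdot, y_k)^*(s_k^x) - f(\cdot, y_{k-1})^*(s_k^x) \le M\|y_k - y_{k-1}\|$ requires the $M$-Lipschitz dependence on $y$ to hold uniformly over the entire effective domain of $f(\cdot, y_k)$, not merely over $\dom h_1$. In the applications treated here, where $f$ is globally defined with uniformly bounded partial supergradients (as in the matrix game example of Section~\ref{sec:numerics}), this extension is immediate; in full generality, a careful appeal to (B2) together with the lower semi-continuity of $f$ is needed.
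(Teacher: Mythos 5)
Your proof is correct and follows essentially the same route as the paper's: rewrite $p_k(x_k)$ and $d_k(y_k)$ via \eqref{def:p-d-k}, bound the cross term $f(x_{k-1},y_k)-f(x_k,y_{k-1})$ by the Lipschitz property from (B2), shift the conjugate $f(\cdot,y_{k-1})^*(s_k^x)$ to $f(\cdot,y_k)^*(s_k^x)$ at a cost of $M\|y_k-y_{k-1}\|$, and absorb all error terms into $16\lam M^2$ via $at-t^2/(2\lam)\le \lam a^2/2$. The only differences are cosmetic --- the paper also records the symmetric shift in the second conjugate and thus accounts for an extra $M\|x_k-x_{k-1}\|$, landing exactly at $16\lam M^2$ rather than your sharper $25\lam M^2/2$ --- and your caveat about the domain over which the supremum in the conjugate shift is taken is a fair point that the paper's proof passes over silently.
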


\begin{proof}
Using \eqref{def:p-d-k} and \eqref{ineq:p-d}, we have for every $(u,v) \in \R^n \times \R^m$,
\begin{align}
    &h_1(x_k) + f(\cdot,y_{k-1})^*(s_k^x) - h_1(u) + h_2(y_k) + [-f(x_{k-1},\cdot)]^*(s_k^y) - h_2(v)  - \inner{g_k}{w}\nn \\
    \le & \delta_k^x + \delta_k^y  + \frac{1}{2\lam}\|z_{k-1} - w\|^2 - \frac1{2\lam}\|z_{k} - w\|^2 + f(x_{k-1},y_k)- f(x_k,y_{k-1}). \label{ineq:full}
\end{align}
It immediately follows from \eqref{ineq:spp-Lips} that
\begin{align*}
    f(x_{k-1},y_k)- f(x_k,y_{k-1}) &= f(x_{k-1},y_k) - f(x_k,y_k) + f(x_k,y_k)- f(x_k,y_{k-1})\\
    &\le M\|x_k - x_{k-1}\| + M\|y_k - y_{k-1}\|.
\end{align*}
Following from the definition of conjugate functions and \eqref{ineq:spp-Lips} again, we have
\begin{align*}
    f(\cdot,y_{k-1})^*(s_k^x) & = \max_x \{\inner{x}{s_k^x} - f(x,y_k) + f(x,y_k) - f(x,y_{k-1})\} \\
    & \ge \max_x \{\inner{x}{s_k^x} - f(x,y_k)\} -M\|y_k - y_{k-1}\| \\
    &= f(\cdot,y_k)^*(s_k^x)-M\|y_k - y_{k-1}\|.
\end{align*}
Similarly, we also have 
\[
f(x_{k-1},\cdot)^*(-s_k^y) \le f(x_k,\cdot)^*(-s_k^y) + M\|x_k - x_{k-1}\|.
\]
Plugging the above three inequalities into \eqref{ineq:full}, we obtain for every $(u,v) \in \R^n \times \R^m$,
\begin{align*}
&h_1(x_k) + f(\cdot,y_k)^*(s_k^x) - h_1(u) + h_2(y_k) + [-f(x_{k-1},\cdot)]^*(s_k^y) - h_2(v) - \inner{s_k}{w} \\
    \le& \delta_k^x + \delta_k^y  + \frac{1}{2\lam}\|z_{k-1} - w\|^2 - \frac1{2\lam}\|z_{k} - w\|^2 + 2M\|x_k-x_{k-1}\| + 2M\|y_k-y_{k-1}\|.
\end{align*}
Noting from the definitions in \eqref{def:delta} that
\begin{align*}
    & \delta_k^x + \delta_k^y + 2M\|x_k-x_{k-1}\| + 2M\|y_k-y_{k-1}\|\\
\stackrel{\eqref{def:delta}}{=} & 4M\|x_k - x_{k-1}\| - \frac{1}{2\lam}\|x_k-x_{k-1}\|^2 + 4M\|y_k - y_{k-1}\| - \frac{1}{2\lam}\|y_k-y_{k-1}\|^2 \\
\le ~& 16\lam M^2,
\end{align*}
we finally conclude that \eqref{ineq:xy} holds.
\end{proof}

The following lemma collects technical results revealing relationships about the averages defined in \eqref{def:spp-pt} below.

\begin{lemma}\label{lem:avg}
Define
\begin{equation}\label{def:spp-pt}
    \bar x_k = \frac1k \sum_{i=1}^k x_i, \quad \bar y_k = \frac1k \sum_{i=1}^k y_i, \quad \bar s_k^x = \frac1k \sum_{i=1}^k s_i^x, \quad \bar s_k^y = \frac1k \sum_{i=1}^k s_i^y.
\end{equation}
Then, the following statements hold for every $k \ge 1$:
\begin{itemize}
    \item[(a)]
    \[
        \frac1k \sum_{i=1}^k f(\cdot,y_i)^*(s_i^x) \ge f(\cdot,\bar y_k)^*(\bar s_k^x), \quad \frac1k \sum_{i=1}^k [-f(x_i,\cdot)]^*(s_i^y) \ge [-f(\bar x_k,\cdot)]^*(\bar s_k^y);
    \]
    \item[(b)]
    \begin{align*}
        \varphi(\bar x_k) &\le h_1(\bar x_k) +[-f(\bar x_k, \cdot)]^*(\bar s_k^y) + h_2^*(-\bar s_k^y), \\
        -\psi(\bar y_k)&\le h_2(\bar y_k) + f(\cdot,\bar y_k)^*(\bar s_k^x) + h_1^*(-\bar s_k^x).
    \end{align*}
\end{itemize}
    
\end{lemma}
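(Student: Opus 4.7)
The plan is to handle the two parts separately. Part (a) is a pure convex-analysis statement that follows from a joint convexity property of the partial conjugate combined with Jensen's inequality, while part (b) is an algebraic unpacking of the definitions of $\varphi$ and $\psi$ followed by a Fenchel--Young-type splitting of a joint supremum. Neither part uses algorithmic information beyond the fact that the quantities in~\eqref{def:spp-pt} are uniform convex combinations.

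For part (a), the key observation is that the map $(s,y) \mapsto f(\cdot,y)^*(s) = \sup_{x \in \R^n}\{\inner{s}{x} - f(x,y)\}$ is jointly convex on $\R^n \times \R^m$. Indeed, for each fixed $x$, the integrand is linear in $s$ and convex in $y$ (the latter because $f$ is concave in $y$), and hence jointly convex in $(s,y)$; taking the pointwise supremum over $x$ preserves joint convexity. Applying Jensen's inequality to the uniform average of the pairs $\{(s_i^x, y_i)\}_{i=1}^k$, whose mean is $(\bar s_k^x, \bar y_k)$ by~\eqref{def:spp-pt}, yields the first inequality. The second inequality is symmetric: $(s,x)\mapsto [-f(x,\cdot)]^*(s) = \sup_y\{\inner{s}{y} + f(x,y)\}$ is jointly convex because $f$ is convex in $x$, and Jensen applied to $\{(s_i^y, x_i)\}_{i=1}^k$ finishes the argument.

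For part (b), I would start from $\varphi(x) = h_1(x) + \max_y\{f(x,y) - h_2(y)\}$, which follows directly from~\eqref{def:spp-func} and~\eqref{eq:spp}. Evaluating at $x = \bar x_k$, inserting $\pm\inner{\bar s_k^y}{y}$ inside the maximum, and applying the elementary bound $\sup_y\{A(y)+B(y)\} \le \sup_y A(y) + \sup_y B(y)$ yields
\[
\max_y\{f(\bar x_k,y) - h_2(y)\} \le \sup_y\{\inner{\bar s_k^y}{y} + f(\bar x_k,y)\} + \sup_y\{-\inner{\bar s_k^y}{y} - h_2(y)\},
\]
and the two terms on the right are by definition $[-f(\bar x_k,\cdot)]^*(\bar s_k^y)$ and $h_2^*(-\bar s_k^y)$, proving the first bound. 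The bound on $-\psi(\bar y_k)$ is handled symmetrically: write $-\psi(\bar y_k) = h_2(\bar y_k) + \max_x\{-f(x,\bar y_k) - h_1(x)\}$, split the maximum by inserting $\pm\inner{\bar s_k^x}{x}$, and identify the two resulting suprema as $f(\cdot,\bar y_k)^*(\bar s_k^x)$ and $h_1^*(-\bar s_k^x)$.

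The only nontrivial step is the joint-convexity observation underlying part (a); once it is identified, everything else reduces to routine Jensen/Fenchel bookkeeping and I expect no further obstacle.
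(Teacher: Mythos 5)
Your proof is correct and follows essentially the same route as the paper: part (b) is identical, and your part (a) merely repackages the paper's two-step argument (exchanging the average with the supremum and then invoking concavity of $f(x,\cdot)$, resp.\ convexity of $f(\cdot,y)$) as joint convexity of the partial conjugate plus Jensen's inequality. No gaps.
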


\begin{proof}
a) We only prove the first inequality to avoid duplication. The second one follows similarly. It follows from the definition of conjugate functions, \eqref{def:spp-pt}, concavity of $f(x,\cdot)$, and basic inequalities that
\begin{align*}
    \frac1k \sum_{i=1}^k f(\cdot,y_i)^*(s_i^x) & = \frac1k \sum_{i=1}^k \max_{x\in \R^n} \{\inner{x}{s_i^x} - f(x,y_i)\} \\
    & \ge \max_{x\in \R^n} \left\{\frac1k \sum_{i=1}^k \inner{x}{s_i^x} - \frac1k \sum_{i=1}^k f(x,y_i)\right\} \\
    & \stackrel{\eqref{def:spp-pt}}{\ge} \max_{x\in \R^n} \left\{\inner{x}{\bar s_k^x} - f(x,\bar y_k)\right\} 
    = f(\cdot,\bar y_k)^*(\bar s_k^x).
\end{align*}

b) For simplicity, we only prove the first inequality. The second one follows similarly.
It follows from the definition of $\varphi$ in \eqref{def:spp-func}, basic inequalities, and the definition of conjugate functions that
\begin{align*}
    \varphi(\bar x_k)& \stackrel{\eqref{def:spp-func}}{=}\max _{y \in \R^m} \phi(\bar x_k, y) =  h_1(\bar x_k)+ \max _{y \in \R^m} \{f(\bar x_k, y)-h_2(y)\} \\
    &\le h_1(\bar x_k) + \max _{y \in \R^m} \{\inner{y}{\bar s_k^y} - (-f(\bar x_k, y))\} + \max _{y \in \R^m} \{\inner{y}{-\bar s_k^y} -h_2(y)\} \\
    &= h_1(\bar x_k) +[-f(\bar x_k, \cdot)]^*(\bar s_k^y) + h_2^*(-\bar s_k^y).
\end{align*}
\end{proof}

\begin{proposition}\label{prop:spp-rate}
For every $k \ge 1$, we have
    \begin{equation}\label{ineq:Phi}
        \Phi(\bar x_k,\bar y_k) = \varphi(\bar x_k)-\psi(\bar y_k) \le 16 \lam M^2 + \frac{D^2}{2\lam k}
    \end{equation}
    where $\Phi(\cdot,\cdot)$ in as in \eqref{eq:spp-Phi}.
\end{proposition}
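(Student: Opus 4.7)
The plan is to chain together Lemma~\ref{lem:tech-h}, Lemma~\ref{lem:avg}, and the diameter bound from (B3). The structure mirrors a standard averaged-regret argument: Lemma~\ref{lem:tech-h} already provides a telescoping one-step recursion involving both primal iterates and the dual vector $s_k$; Lemma~\ref{lem:avg}(a) converts the averaged conjugates into conjugates evaluated at averaged primal points; and Lemma~\ref{lem:avg}(b) lifts the resulting gap to $\varphi(\bar x_k) - \psi(\bar y_k)$.

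First, I would replace the index $k$ in Lemma~\ref{lem:tech-h} by $i$, sum from $i=1$ to $k$, and divide by $k$. The quadratic terms telescope to $\|z_0-w\|^2/(2\lam k)$, so using convexity of $h_1$ and $h_2$ together with the definitions in \eqref{def:spp-pt} and Lemma~\ref{lem:avg}(a), I obtain for every $(u,v) \in \dom h_1 \times \dom h_2$
\begin{align*}
    & h_1(\bar x_k) + f(\cdot,\bar y_k)^*(\bar s_k^x) + h_2(\bar y_k) + [-f(\bar x_k,\cdot)]^*(\bar s_k^y) - h_1(u) - h_2(v) - \inner{\bar s_k^x}{u} - \inner{\bar s_k^y}{v} \\
    \le \; & 16\lam M^2 + \frac{\|z_0 - w\|^2}{2\lam k}.
\end{align*}

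Next, I would maximize both sides over $(u,v) \in \dom h_1 \times \dom h_2$. On the right-hand side, assumption (B3) bounds $\|z_0 - w\|$ by $D$, yielding the term $D^2/(2\lam k)$. On the left-hand side, the maximization over the linear-plus-$(-h_1)$ and linear-plus-$(-h_2)$ pieces produces $h_1^*(-\bar s_k^x) + h_2^*(-\bar s_k^y)$ by definition of the conjugate. This gives
\[
    h_1(\bar x_k) + f(\cdot,\bar y_k)^*(\bar s_k^x) + h_1^*(-\bar s_k^x) + h_2(\bar y_k) + [-f(\bar x_k,\cdot)]^*(\bar s_k^y) + h_2^*(-\bar s_k^y) \le 16\lam M^2 + \frac{D^2}{2\lam k}.
\]

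Finally, summing the two inequalities in Lemma~\ref{lem:avg}(b) shows that $\varphi(\bar x_k) - \psi(\bar y_k)$ is bounded above by exactly the left-hand side of the display above, and by \eqref{eq:spp-Phi} this quantity equals $\Phi(\bar x_k,\bar y_k)$, which establishes \eqref{ineq:Phi}. The main obstacle here is purely bookkeeping: verifying that the six conjugate/primal terms produced by averaging, maximization, and Lemma~\ref{lem:avg}(b) all match up, since the asymmetric appearance of $x_{k-1}$ versus $x_k$ inside the conjugate $[-f(x_{k-1},\cdot)]^*(s_k^y)$ in Lemma~\ref{lem:tech-h} must be reconciled with the $\bar x_k$ appearing after averaging (this is handled implicitly by the Lipschitz exchange already absorbed into the $16\lam M^2$ constant during the derivation of Lemma~\ref{lem:tech-h}).
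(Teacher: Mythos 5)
Your proposal is correct and follows essentially the same route as the paper: sum the telescoping inequality of Lemma~\ref{lem:tech-h}, average, apply Lemma~\ref{lem:avg}(a) and convexity, maximize over $(u,v)\in\dom h_1\times\dom h_2$ to produce $h_1^*(-\bar s_k^x)+h_2^*(-\bar s_k^y)$ and the $D^2/(2\lam k)$ term via (B3), and conclude with Lemma~\ref{lem:avg}(b). Your closing remark about the $x_{k-1}$ versus $x_k$ index inside $[-f(x_{k-1},\cdot)]^*(s_k^y)$ is also apt --- that exchange is indeed already paid for by the Lipschitz estimates absorbed into the $16\lam M^2$ constant in the proof of Lemma~\ref{lem:tech-h}.
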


\begin{proof}
Replacing the index $k$ in \eqref{ineq:xy} by $i$, summing the resulting inequality from $i = 1$ to $k$, and using Lemma \ref{lem:avg}(a), convexity, and \eqref{def:spp-pt}, we have for every $(u,v) \in \R^n \times \R^m$,
\begin{align*}
    & h_1(\bar x_k) + f(\cdot,\bar y_k)^*(\bar s_k^x) - \inner{\bar s_k^x}{u} - h_1(u) + h_2(\bar y_k) + [-f(\bar x_k,\cdot)]^*(\bar s_k^y) - \inner{\bar s_k^y}{v} - h_2(v) \\
    \le& 16 \lam M^2 + \frac{1}{2\lam k}\|z_0 - w\|^2.
\end{align*}
Maximizing both sides of the above inequality over $(u,v) \in \dom h_1 \times \dom h_2$ yields
\begin{align*}
    &h_1(\bar x_k) + f(\cdot,\bar y_k)^*(\bar s_k^x) + h_1^*(-\bar s_k^x) + h_2(\bar y_k) +[-f(\bar x_k,\cdot)]^*(\bar s_k^y) + h_2^*(-\bar s_k^y) \\
    \le& 16 \lam M^2 + \frac{1}{2\lam k} \max\{\|z_0 - w\|^2: w\in \dom h_1 \times \dom h_2 \}.
\end{align*}
Finally, \eqref{ineq:Phi} follows from Lemma \ref{lem:avg}(b), (B3), and the definition of $\Phi(\cdot,\cdot)$ in \eqref{eq:spp-Phi}.
\end{proof}

\begin{theorem}\label{thm:cs-spp}
Given $(x_0,y_0,\bar \varepsilon) \in \dom h_1 \times \dom h_2 \times R_{++}$, letting $\lam = {\bar \varepsilon}/{32M^2}$, then the number of iterations of CS-SPP$(x_0,y_0,\lam)$ to find a $\bar \varepsilon$-saddle-point $(\bar x_k, \bar y_k)$ of \eqref{eq:spp} is at most $128 M^2 D^2/\bar \varepsilon^2$. 
\end{theorem}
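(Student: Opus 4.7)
The plan is to derive the iteration-complexity bound directly from Proposition \ref{prop:spp-rate}, which already supplies a convergence rate on the primal-dual gap $\varphi(\bar x_k)-\psi(\bar y_k)$, and then translate this gap bound into the $\bar\varepsilon$-saddle-point property via the equivalence given in Lemma \ref{lem:spp-equiv}. No new estimate is needed; the proof is a substitution argument.

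First, I would invoke Proposition \ref{prop:spp-rate} to obtain
\[
\varphi(\bar x_k)-\psi(\bar y_k) \le 16\lam M^2 + \frac{D^2}{2\lam k}
\]
for every $k \ge 1$. Plugging in the prescribed stepsize $\lam = \bar\varepsilon/(32M^2)$ collapses the first term to $\bar\varepsilon/2$. Next, setting $k = 128 M^2 D^2/\bar\varepsilon^2$ gives $\lam k = 4 D^2/\bar\varepsilon$, so that $D^2/(2\lam k) = \bar\varepsilon/8 \le \bar\varepsilon/2$. Combining these two bounds yields $\varphi(\bar x_k)-\psi(\bar y_k) \le \bar\varepsilon$ after at most $128 M^2 D^2/\bar\varepsilon^2$ iterations.

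Finally, I would note that $(\bar x_k,\bar y_k)$ lies in $\dom h_1 \times \dom h_2$ because each $(x_i,y_i)$ belongs to $\dom h_1 \times \dom h_2$ (as the output of the prox steps \eqref{eq:spp-xk}-\eqref{eq:spp-yk}) and both domains are convex; hence the average is in the domain. Then, by the definition of $\bar\varepsilon$-saddle-point given in \eqref{def:eps-spp-equiv}, the bound $\varphi(\bar x_k)-\psi(\bar y_k)\le \bar\varepsilon$ is precisely the statement that $(\bar x_k,\bar y_k)$ is a $\bar\varepsilon$-saddle-point, where we also invoke Lemma \ref{lem:spp-equiv} to identify \eqref{def:eps-spp-equiv} with the $\varepsilon$-subdifferential definition in \eqref{def:eps-spp}. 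There is no substantial obstacle: the heavy lifting has been done in Proposition \ref{prop:spp-rate}; the only thing to verify is that the chosen $(\lam, k)$ actually balance the two terms, and that the averaged iterate inherits feasibility from the bundle iterates.
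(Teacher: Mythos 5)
Your proposal is correct and follows essentially the same route as the paper: invoke Proposition \ref{prop:spp-rate}, substitute $\lam = \bar\varepsilon/(32M^2)$ to make the first term $\bar\varepsilon/2$, and choose $k$ so the second term is at most $\bar\varepsilon/2$ (your arithmetic giving $\bar\varepsilon/8$ at $k = 128M^2D^2/\bar\varepsilon^2$ is right). The extra remarks about feasibility of the averaged iterate and the identification of the gap bound with \eqref{def:eps-spp-equiv} via Lemma \ref{lem:spp-equiv} are details the paper leaves implicit but are consistent with its argument.
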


\begin{proof}
    It follows from Proposition \ref{prop:spp-rate} and the choice of $\lam$ that
    \[
    \Phi(\bar x_k,\bar y_k) \le \frac{\bar \varepsilon}{2} + \frac{64 D^2}{\bar \varepsilon k}.
    \]
    Hence, the conclusion of the theorem follows immediately.
\end{proof}

\section{Implementation details of numerical experiments}\label{sec:detail}

This section presents Algorithm \ref{alg:Sj} for exactly solving \eqref{def:fz}, which gives rise to the exact computation of $\varphi(x)$ and $\psi(y)$ in \eqref{eq:varphi}. We first state a technical result that characterizes the optimal solution $\hat x$ to \eqref{def:fz}, from which Algorithm \ref{alg:Sj} follows as a direct consequence.

\begin{proposition}\label{prop:exact}
Let $z\in\R^n$ and scalar $\gamma \ge 0$ be given. 
Define
\begin{equation}\label{eq:Sj}
    S_j = \frac{1}{j}\left(\gamma+\sum_{i=1}^j z_{(i)}\right),
\end{equation}
where $(1), \ldots,(n)$ index $z$ in non-decreasing order $z_{(1)} \le \ldots \le z_{(n)}$.
Let $j^*$ be the first index such that $S_j \le S_{j+1}$, or $n$ if the condition is never satisfied.
Then, $\hat x \in \R^n$ defined as
\begin{equation}\label{def:xhat}
    \hat x_{(i)}=\begin{cases}
    \frac{1}{j^*},& i\leq j^*;\\
    0,& \text{otherwise}
    \end{cases}
\end{equation}
exactly solves \eqref{def:fz}.
\end{proposition}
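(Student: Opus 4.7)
The plan is to reformulate \eqref{def:fz} in epigraph form, write down its KKT conditions, and verify them at the candidate $\hat x$ together with carefully chosen dual multipliers. Introducing an auxiliary scalar $t$ with the constraint $x \le te$, where $e = (1,\ldots,1)^\top \in \R^n$, converts \eqref{def:fz} into the convex program
\[
\min_{x \in \R^n,\, t \in \R}\{ z^\top x + \gamma t : e^\top x = 1,\ x \ge 0,\ te - x \ge 0\}.
\]
Attaching multipliers $\mu \in \R$, $\alpha \ge 0$, and $\beta \ge 0$ to the three constraints, the KKT system reads $z - \mu e - \alpha + \beta = 0$, $e^\top \beta = \gamma$, $\alpha_i x_i = 0$, and $\beta_i(t - x_i) = 0$ for every $i$.

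For the candidate $\hat x$ in \eqref{def:xhat}, I would choose $\hat t = 1/j^*$, $\hat \mu = S_{j^*}$, $\hat \beta_{(i)} = \hat \mu - z_{(i)}$ for $i \le j^*$ and $\hat \beta_{(i)} = 0$ otherwise, and $\hat \alpha_{(i)} = z_{(i)} - \hat \mu$ for $i > j^*$ and $\hat \alpha_{(i)} = 0$ otherwise. Stationarity $z - \hat \mu e = \hat \alpha - \hat \beta$ and complementary slackness $\hat \alpha_i \hat x_i = 0$, $\hat \beta_i(\hat t - \hat x_i) = 0$ hold immediately by construction, and the sum condition $e^\top \hat \beta = j^* S_{j^*} - \sum_{i=1}^{j^*} z_{(i)} = \gamma$ follows directly from the definition of $S_{j^*}$ in \eqref{eq:Sj}.

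The main obstacle is verifying dual feasibility $\hat \alpha, \hat \beta \ge 0$, which, since $z$ is already sorted, reduces to the two scalar inequalities $z_{(j^*)} \le S_{j^*}$ (when $j^* \ge 2$) and $z_{(j^*+1)} \ge S_{j^*}$ (when $j^* < n$). Using the elementary recursion $(j+1) S_{j+1} = j S_j + z_{(j+1)}$, I would establish the equivalences $S_{j+1} \ge S_j \iff z_{(j+1)} \ge S_j$ and $S_j \le S_{j-1} \iff z_{(j)} \le S_j$. The first equivalence, together with the defining property of $j^*$ as the first index with $S_{j^*} \le S_{j^*+1}$, immediately yields $z_{(j^*+1)} \ge S_{j^*}$. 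The second equivalence, combined with the observation that the minimality of $j^*$ forces $S_1 > S_2 > \cdots > S_{j^*}$ and in particular $S_{j^*} < S_{j^*-1}$, yields $z_{(j^*)} \le S_{j^*}$. The edge cases $j^* = 1$ and $j^* = n$ make one of these inequalities vacuous and are handled directly. Since \eqref{def:fz} is a convex program with polyhedral constraints, satisfaction of the KKT system is sufficient for $\hat x$ to be a global optimum of \eqref{def:fz}.
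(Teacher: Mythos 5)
Your proof is correct, but it takes a genuinely different route from the paper's. The paper argues structurally: it rewrites \eqref{def:fz} as $\min_{x,t}\{z^\top x+\gamma t: 0\le x_i\le t,\ e^\top x=1\}$, observes that an optimal $x$ must pile mass onto the smallest coordinates of $z$ and hence has the form $\hat x_{(1)}=\dots=\hat x_{(j)}=1/j$, reduces the problem to the discrete minimization $\min_j S_j$, and then uses the recursion $(j+1)S_{j+1}=jS_j+z_{(j+1)}$ to show $\{S_j\}$ is unimodal, so the first index with $S_{j}\le S_{j+1}$ is a global minimizer. You instead exhibit an explicit KKT certificate ($\hat\mu=S_{j^*}$, $\hat\beta_{(i)}=S_{j^*}-z_{(i)}$ on the support, $\hat\alpha_{(i)}=z_{(i)}-S_{j^*}$ off it) for the same epigraph LP, and the only nontrivial work --- dual feasibility --- reduces to the two scalar inequalities $z_{(j^*)}\le S_{j^*}$ and $z_{(j^*+1)}\ge S_{j^*}$, which you extract from the same recursion via the minimality of $j^*$. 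Your verification of these inequalities is sound (including the equivalence $S_j\le S_{j-1}\iff z_{(j)}\le S_j$, which follows from $jS_j=(j-1)S_{j-1}+z_{(j)}$, and the edge cases $j^*=1$, where $z_{(1)}\le \gamma+z_{(1)}$ is immediate from $\gamma\ge0$, and $j^*=n$). The trade-off: the paper's argument additionally yields the unimodality of $\{S_j\}$, which is what justifies the early-termination test in Algorithm~\ref{alg:Sj}, but its key step ("assigns as much mass as allowed to the smallest coordinates") is stated informally; your KKT route certifies global optimality of $\hat x$ rigorously and self-containedly, at the cost of not saying anything about the shape of the sequence $\{S_j\}$.
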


\begin{proof}
Without loss of generality, we assume that $z$ has been sorted with non-decreasing entries, i.e., $z_1 \le \ldots \le z_n$.
It is easy to see that \eqref{def:fz} can be reformulated as
\[
\min_{x \in \R^n, t \ge 0} \left\{z^\top x + \gamma t: 0 \le x_i \le t, \, i=1, 2, \ldots, n, \, \sum_{i=1}^n x_i = 1\right\}.
\]
For fixed $t$, the optimal $x$ assigns as much mass as allowed (up to the capacity $t$) to the smallest coordinates of $z$. Hence, the optimal solution $\hat x$ has the form of
\[
\hat x_1 = \ldots = \hat x_j = \frac{1}{j}, \quad \hat x_{j+1} = \ldots = \hat x_n = 0,
\]
where $j \in\{1, \ldots, n\}$ satisfies $t=1/j$.
We thus note that the objective value in \eqref{def:fz} at $\hat x$ is
\[
    f_z(\hat x) \stackrel{\eqref{def:fz}}= \frac{1}{j}\left(\gamma+\sum_{i=1}^j z_i\right) \stackrel{\eqref{eq:Sj}}= S_j.
\]
Therefore, the problem reduces to $\min\{S_j: j = 1, \ldots, n\}$.
We observe from the definition of $S_j$ in \eqref{eq:Sj} that
\begin{equation}\label{eq:obs}
    S_{j+1} = \frac{j S_j + z_{j+1}}{j+1},
\end{equation}
and hence that
    \begin{equation}\label{eq:Sj-diff}
        S_j - S_{j+1} = \frac{S_j - z_{j+1}}{j+1}.
    \end{equation}
It thus follows from $S_j \le S_{j+1}$ that $S_j \le z_{j+1}$, which, together with \eqref{eq:obs} and the monotonicity of $\{z_j\}$, implies that
    \[
    S_{j+1} \stackrel{\eqref{eq:obs}} \le z_{j+1} \le z_{j+2}.
    \]
Hence, it follows from \eqref{eq:Sj-diff} with $j=j+1$ that $S_{j+1} \le S_{j+2}$. Therefore, $\{S_j\}$ is non-increasing for $j \le j^*$ while non-decreasing for $j \ge j^*$.
Finally, we conclude that $\hat x$ defined in \eqref{def:xhat} is an exact optimal solution to \eqref{def:fz}.
\end{proof}

The optimal solution $\hat x$ to \eqref{def:fz} may not be unique, as the problem $\min\{S_j : j = 1, \ldots, n\}$ can admit multiple minimizers, and each minimal index $j^*$ induces a corresponding $\hat x$ via \eqref{def:xhat}.
The following algorithm for exactly solving \eqref{def:fz} is natural from Proposition \ref{prop:exact}.

\begin{algorithm}[H]
\caption{Exact solving for \eqref{def:fz}}
\begin{algorithmic}\label{alg:Sj}
\REQUIRE  given $z \in \R^n$ and $\gamma \ge 0$
\STATE Sort $z$ in ascending order, compute $S_1$ as in \eqref{eq:Sj}, and set $j^*=n$;
\FOR{$j = 1, \ldots, n-1$}
\STATE Compute $S_{j+1}$ as in \eqref{eq:Sj}, if $S_j \le S_{j+1}$, then set $j^*=j$ and quit the loop;
\ENDFOR
\STATE Compute $\hat x$ as in \eqref{def:xhat} and set $f_z(\hat x) = S_{j^*}$.
\ENSURE $\hat x$ and $f_z(\hat x)$
\end{algorithmic} 
\end{algorithm}

\end{document}